\documentclass[a4paper,12pt]{amsart}

\usepackage{amssymb}
\usepackage[utf8]{inputenc}
\usepackage[T1]{fontenc}
\usepackage{textcomp}
\usepackage{tikz}
\usepackage{enumitem}

\setenumerate[1]{label=(\roman{*}), ref=(\roman{*})}
\setenumerate[2]{label=(\alph{*}), ref=(\alph{*})}

\DeclareMathOperator\conv{conv}
\DeclareMathOperator\supp{supp}
\DeclareMathOperator\sgn{sgn}
\DeclareMathOperator\ext{ext}
\DeclareMathOperator\spann{span}
\newcommand{\bt}{\mathfrak{B}}
\newcommand{\mbt}{\mathfrak{M}}
\newcommand{\acF}{\mathfrak{F}} %antichain F
\newcommand\N{\mathbb N}

\newcommand{\eps}{\varepsilon}
\newcommand{\clonv}{\overline{\mbox{conv}}}
\newcommand{\ha}{h_{\mathcal{A},1}}
\newcommand{\A}{\mathcal{A}}

\newcommand{\mx}{M(x)}
\newcommand{\minfx}{M^\infty(x)}
\newcommand{\mend}{M^{\mathcal{F}}}
\newcommand{\norm}[1]{\|#1\|}

\newtheorem{thm}{Theorem}[section]
\newtheorem{prop}[thm]{Proposition}
\newtheorem{lem}[thm]{Lemma}
\newtheorem{cor}[thm]{Corollary}

\theoremstyle{definition}
\newtheorem{defn}[thm]{Definition}

\newtheorem{quest}[thm]{Question}

\theoremstyle{remark}
\newtheorem{fact}[thm]{Fact}

\newtheorem{rem}[thm]{Remark}

\title[Daugavet-, delta-points and uncond.\ bases]
{Daugavet- and delta-points in Banach spaces with unconditional bases}

\author[T.~A.~Abrahamsen]{Trond A.~Abrahamsen}
\address[T.~A.~Abrahamsen]{Department of Mathematics, University of
  Agder, Postboks 422, 4604 Kristiansand, Norway.}
\email{trond.a.abrahamsen@uia.no}
\urladdr{http://home.uia.no/trondaa/index.php3}

\author[V.~Lima]{Vegard Lima}
\address[V.~Lima]{Department of Engineering Sciences, University of Agder,
Postboks 509, 4898 Grimstad, Norway.}
\email{Vegard.Lima@uia.no}

\author[A.~Martiny]{Andr\'e Martiny}
\address[A.~Martiny]{Department of Mathematics, University of
  Agder, Postboks 422, 4604 Kristiansand, Norway.}
\email{andre.martiny@uia.no}

\author[S.~Troyanski]{Stanimir Troyanski}
\address[S.~Troyanski]{Institute of Mathematics and Informatics,
  Bulgarian Academy of Science, bl.8, acad. G.~Bonchev str.~1113 Sofia, Bulgaria and
  Departamento de Matem\'aticas, Universidad de Murcia, Campus de
  Espinardo, 30100 Espinardo (Murcia), Spain}
\email{stroya@um.es}

\subjclass[2010]{Primary 46B20, 46B22, 46B04}

\keywords{delta-point, Daugavet-point, Diametral diameter two
  property, Daugavet property, 1-unconditional basis}

\thanks{The fourth named author was supported by MTM2017-86182-P
  (AEI/FEDER, UE), and Bulgarian National Scientific Fund, Grant,
  KP–06–H22/4, 04.12.2018.}

\begin{document}

\begin{abstract} We study the existence of Daugavet- and delta-points
  in the unit sphere of Banach spaces with a $1$-unconditional basis. A
  norm one element $x$ in a Banach space is a Daugavet-point
  (resp.\ delta-point) if every element in the unit ball
  (resp.\ $x$ itself) is in the closed convex hull of unit ball
  elements that are almost at distance $2$ from $x$. A Banach
  space has the Daugavet property (resp.\ diametral local diameter two
  property) if and only if every norm one element is a Daugavet-point
  (resp.\ delta-point). It is well-known that a Banach space with the
  Daugavet property does not have an unconditional basis.
  Similarly spaces with the diametral local diameter two property
  do not have an unconditional basis with suppression unconditional
  constant strictly less than $2$.

  We show that no Banach space with a subsymmetric basis can
  have delta-points. In contrast we construct a Banach space
  with a $1$-unconditional basis with delta-points, but with no
  Daugavet-points, and a Banach space with a $1$-unconditional basis
  with a unit ball in which the Daugavet-points are weakly dense.
\end{abstract}

\maketitle

\section{Introduction}
\label{sec:intro}
Let $X$ be a Banach space with unit ball $B_X$, unit sphere $S_X,$ and
topological dual $X^*$.
For $x \in S_X$ and $\eps > 0$ let $\Delta_\eps(x) = \{y \in B_X:
\|x -y\| \ge 2 -\eps\}$. We say that $X$ has the
\begin{enumerate}
  \item \label{item:daug}\emph{Daugavet property} if for every $x
    \in S_X$ and every $\eps > 0$ we have $B_X =\clonv\Delta_\eps(x)$;
  \item \label{item:delta}\emph{diametral local diameter two property}
    if for every $x
    \in S_X$ and every $\eps > 0$ we have $x \in \clonv\Delta_\eps(x)$.
\end{enumerate}

In \cite[Corollary~2.3]{Kadets} Kadets proved that any
Banach space with the Daugavet property fails to have an unconditional
basis (see also
\cite[Proposition~3.1]{MR1856978}). These arguments are probably the
easiest known proofs of the absence
of unconditional bases in the classical Banach spaces $C[0,1]$ and
$L_1[0,1]$.
The diametral local diameter two property
was named and studied in \cite{BGLPRZ-diametral},
but it was first introduced in \cite{zbMATH02168839}
under the name \emph{space with bad projections}.
(See the references in \cite{zbMATH02168839} for previous
unnamed appearances of this property.)
Using the characterizations in \cite{zbMATH02168839}
we see that if a Banach space
with the diametral local diameter two property
has an unconditional basis, then
the unconditional suppression basis constant is at least $2$.
But note that
we do not know of any Banach space with an unconditional basis
and the diametral local diameter two property.

In the present paper we study pointwise versions of the
Daugavet property and the diametral local diameter two property
in spaces with $1$-unconditional bases.

\begin{defn}
  \label{defn:daug-delta-points}
  Let $X$ be a Banach space and let $x \in S_X$. We say that $x$ is
  \begin{enumerate}
  \item a \emph{Daugavet-point} if for every $\eps > 0$ we have
    $B_X = \clonv\Delta_\eps(x)$;
  \item a \emph{delta-point} if for every $\eps > 0$ we have
    $x \in \clonv\Delta_\eps(x)$.
  \end{enumerate}
\end{defn}

Daugavet-points and delta-points were introduced in
\cite{AHLP}.
For the spaces $L_1(\mu)$, for preduals of
such spaces, and for M{\"u}ntz spaces these notions are the same
\cite[Theorems~3.1, 3.7, and 3.13]{AHLP}.
However, $C[0,1] \oplus_2 C[0,1]$ is an example of a space with the
diametral local diameter two property, but
with no Daugavet-points \cite[Example~4.7]{AHLP}.
Stability results for Daugavet- and delta-points
in absolute sums of Banach spaces
was further studied in \cite{haller2020daugavet}.

In Section~\ref{sec:1unc-nodelta} we consider
Banach spaces with $1$-unconditional bases and study a
family of subsets of the support of a vector $x$.
We find properties of these subsets that are intimately
linked to $x$ not being a delta-point.
Quite general results are obtained in this direction.
We apply these results to show that Banach spaces
with subsymmetric bases (these include separable
Lorentz and Orlicz sequence spaces)
always fail to contain delta-points.

In Section~\ref{sec:binarytree} we
construct a Banach space with a $1$-unconditional basis
which contains a delta-point, but contain no Daugavet-points. The example
is a Banach space of the type $\ha$ generated by an adequate family of
subsets of a binary tree.
The norm of the space is the supremum of the $\ell_1$-sum
of branches in the binary tree.

In Section~\ref{sec:modbinarytree} we modify slightly the
binary tree from Section~\ref{sec:binarytree} and
the associated adequate family,
to obtain an $\ha$ space with some remarkable properties:
It has Daugavet-points;
the Daugavet-points are even weakly dense in the unit ball;
the diameter of every slice of the unit ball is two,
but is has relatively weakly open subsets of the unit ball
of arbitrary small diameter.

Finally, let us also remark that the examples in both
Section~\ref{sec:binarytree} and Section~\ref{sec:modbinarytree}
contain isometric copies of $c_0$ and $\ell_1$.
Both the $\ell_1$-ness of the branches and $c_0$-ness
of antichains in the binary tree play an important
role in our construction of Daugavet- and delta-points
in these spaces (see
e.g. Theorems~\ref{thm:bt-har-delta-ikke-daugavet}
and \ref{thm:mbt_kar_daugpkt}, and
Corollary~\ref{cor:konkret-betingelse-for-daugavetpkt}).

\section{$1$-unconditional bases and the sets $M(x)$}
\label{sec:1unc-nodelta}
The main goal of this section is to prove that Banach spaces with a
subsymmetric basis fail to have delta-points. Before we start this
mission, let us point out some results and
concepts that we will need. First some characterizations of
Daugavet- and delta-points that we will frequently use throughout the paper.

Recall that a \emph{slice} of the unit ball $B_X$ of a Banach space
$X$ is a subset of the
form
\begin{align*}
  S(x^*,\eps) = \{x \in B_X: x^*(x) > \|x^*\| - \eps\},
\end{align*}
where $x^* \in X^*$ and $\eps > 0$.

\begin{prop}{\cite[Lemma~2.3]{AHLP}}\label{prop:Daugavet_point_crit}
       Let $X$ be a Banach space and $x \in S_X$.
       The following assertions are equivalent:
       \begin{enumerate}
              \item\label{aaaa}
              $x$ is a Daugavet-point;
              \item\label{bbbb}
              for every slice $S$ of $B_X$ and for every $\varepsilon>0$
              there exists $y\in S$ such that $\|x - y\| \geq 2-\varepsilon$.
       \end{enumerate}
\end{prop}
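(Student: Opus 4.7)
The plan is to prove the equivalence by a standard Hahn--Banach separation argument in both directions, translating between the ``$B_X$ is covered by the closed convex hull of elements far from $x$'' formulation and the ``every slice contains elements far from $x$'' formulation.

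For $(i) \Rightarrow (ii)$, I fix a slice $S(x^*,\alpha)$ and some $\varepsilon > 0$, and pick any $y_0 \in S(x^*,\alpha)$, say with $x^*(y_0) = \|x^*\| - \alpha + \delta$ for some $\delta > 0$. Since $x$ is a Daugavet-point, $y_0 \in \clonv \Delta_\varepsilon(x)$, so I can approximate $y_0$ in norm by a convex combination $\sum \lambda_i z_i$ with $z_i \in \Delta_\varepsilon(x)$, within $\delta/(2\|x^*\|)$ say. Applying $x^*$ and using $|x^*(y_0) - x^*(\sum \lambda_i z_i)| < \delta/2$, the average $\sum \lambda_i x^*(z_i)$ exceeds $\|x^*\| - \alpha + \delta/2$, so at least one $z_i$ lies in the slice; this $z_i$ also satisfies $\|x - z_i\| \geq 2 - \varepsilon$, which is what (ii) demands.

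For $(ii) \Rightarrow (i)$, I argue by contrapositive: assume $x$ is not a Daugavet-point and produce a slice that avoids $\Delta_\varepsilon(x)$ (in fact, lies at distance $< 2-\varepsilon$ from $x$) for some $\varepsilon > 0$. By assumption there exists $\varepsilon > 0$ and $y_0 \in B_X$ with $y_0 \notin \clonv \Delta_\varepsilon(x)$. Since $\clonv \Delta_\varepsilon(x)$ is a closed convex set, Hahn--Banach separation yields $x^* \in X^*$ and $\eta > 0$ with
\begin{equation*}
  x^*(y_0) \geq \sup\{x^*(z) : z \in \Delta_\varepsilon(x)\} + \eta =: \beta + \eta.
\end{equation*}
Setting $\alpha = \|x^*\| - \beta - \eta/2$, the inequality $\|x^*\| \geq x^*(y_0) \geq \beta + \eta$ gives $\alpha \geq \eta/2 > 0$, and $y_0$ lies in the slice $S(x^*,\alpha)$. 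By construction every $z \in S(x^*,\alpha)$ satisfies $x^*(z) > \beta$, so $z \notin \Delta_\varepsilon(x)$, i.e.\ $\|x - z\| < 2 - \varepsilon$. This contradicts (ii) with the same $\varepsilon$.

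No part of the argument is especially delicate; the only point that needs a moment's care is checking that the separation functional $x^*$ yields a genuine slice of $B_X$ (i.e.\ $\alpha > 0$ and the slice is nonempty), which follows from the quantitative separation $x^*(y_0) \geq \beta + \eta$ obtained above.
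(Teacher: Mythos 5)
Your proof is correct; both directions are the standard Hahn--Banach separation argument (averaging a convex combination against the slice functional for (i)$\Rightarrow$(ii), and separating a point of $B_X$ from $\clonv\Delta_\eps(x)$ for the contrapositive of (ii)$\Rightarrow$(i)), and the quantitative check that $\alpha>0$ is handled properly. The paper itself states this result without proof, citing \cite[Lemma~2.3]{AHLP}, and your argument is essentially the proof given there.
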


\begin{prop}{\cite[Lemma~2.2]{AHLP}}\label{prop:Delta_point_crit}
       Let $X$ be a Banach space and $x \in S_X$.
       The following assertions are equivalent:
       \begin{enumerate}
              \item\label{aaa}
              $x$ is a delta-point;
              \item\label{bbb}
              for every slice $S$ of $B_X$ with $x\in S$ and for every
              $\varepsilon > 0$ there exists $y \in S_X$
              such that $\|x - y\| \geq 2 - \varepsilon$.
       \end{enumerate}
\end{prop}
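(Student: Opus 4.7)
The plan is to prove both implications via slice/separation arguments. The forward direction (i) $\Rightarrow$ (ii) extracts a single norm-one witness from a convex combination approximating $x$, while (ii) $\Rightarrow$ (i) follows contrapositively by Hahn--Banach, separating $x$ from $\clonv\Delta_\eps(x)$ whenever membership fails.

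For (i) $\Rightarrow$ (ii), fix a slice $S = S(x^*,\alpha)$ with $x\in S$ and an $\eps > 0$; by rescaling we may assume $\alpha \leq \|x^*\|$. Using $x \in \clonv\Delta_{\eps/2}(x)$, I would pick a convex combination $z = \sum_i \lambda_i y_i$ with $y_i \in \Delta_{\eps/2}(x)$ and $\|z - x\|$ small enough to force $x^*(z) > \|x^*\| - \alpha$; averaging then forces some summand $y_{i_0}$ to satisfy $x^*(y_{i_0}) > \|x^*\| - \alpha$, so $y_{i_0} \in S \cap \Delta_{\eps/2}(x)$. The subtlety is that $y_{i_0}$ lies in $B_X$ rather than $S_X$. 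However, $\|x - y_{i_0}\| \geq 2 - \eps/2$ forces $\|y_{i_0}\| \geq 1 - \eps/2$, so the normalization $\tilde y := y_{i_0}/\|y_{i_0}\|$ lies in $S_X$ with $\|\tilde y - y_{i_0}\| = 1 - \|y_{i_0}\| \leq \eps/2$, whence $\|x - \tilde y\| \geq 2 - \eps$. Finally $x^*(\tilde y) = x^*(y_{i_0})/\|y_{i_0}\| \geq x^*(y_{i_0})$, using $x^*(y_{i_0}) \geq \|x^*\| - \alpha \geq 0$ and $\|y_{i_0}\| \leq 1$, so $\tilde y$ remains in $S$.

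For (ii) $\Rightarrow$ (i), I argue by contradiction: suppose there exists $\eps > 0$ with $x \notin \clonv\Delta_\eps(x)$. Hahn--Banach separation of $\{x\}$ from this closed convex set yields $x^* \in X^*$ and $c \in \R$ with
\[
x^*(x) > c \geq \sup_{y \in \Delta_\eps(x)} x^*(y).
\]
Setting $\alpha := \|x^*\| - c$, we have $\alpha > 0$ (since $c < x^*(x) \leq \|x^*\|$) and $x \in S(x^*,\alpha)$. Applying (ii) to this slice with the same $\eps$ produces $y \in S_X \cap S(x^*,\alpha)$ with $\|x-y\| \geq 2-\eps$. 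But then $y \in \Delta_\eps(x)$ forces $x^*(y) \leq c = \|x^*\| - \alpha$, contradicting $y \in S(x^*,\alpha)$.

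The only non-routine point is the normalization step in (i) $\Rightarrow$ (ii), since slices and $\Delta_\eps(x)$ naturally live in $B_X$ while (ii) demands an $S_X$-witness. This is absorbed into the $\eps$-slack via the estimate $\|y_{i_0}\| \geq 1 - \eps/2$, which is automatic from the near-maximal distance $\|x - y_{i_0}\| \geq 2 - \eps/2$; everything else is a standard application of Hahn--Banach and the averaging principle for convex combinations.
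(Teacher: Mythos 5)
The paper itself gives no proof of this proposition; it is quoted directly from \cite[Lemma~2.2]{AHLP}, so there is no internal argument to compare against. Your proof is the standard one and is essentially correct: Hahn--Banach separation for \ref{bbb}~$\Rightarrow$~\ref{aaa}, and the averaging principle applied to a convex combination witnessing $x\in\clonv\Delta_{\eps/2}(x)$ for \ref{aaa}~$\Rightarrow$~\ref{bbb}. The normalization step is the right way to bridge the gap between a $B_X$-valued witness and the $S_X$-valued one the statement asks for, and your estimate $\|y_{i_0}\|\ge 1-\eps/2$ from the near-maximal distance is exactly what makes it work; you also correctly read the intent of the statement (mirroring Proposition~\ref{prop:Daugavet_point_crit}) that the witness is to lie in the slice $S$ as well as in $S_X$, although the printed statement only says $y\in S_X$.

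One small inaccuracy: the reduction ``by rescaling we may assume $\alpha\le\|x^*\|$'' does not follow from rescaling, since replacing $x^*$ by $cx^*$ replaces $\alpha$ by $c\alpha$ and leaves the ratio $\alpha/\|x^*\|$ unchanged. If $\alpha>\|x^*\|$ one cannot always shrink $\alpha$ and keep $x$ in the slice (this fails when $x^*(x)\le 0$). The gap is cosmetic and easily closed: if $x^*(x)>0$ replace $\alpha$ by $\min\{\alpha,\|x^*\|\}$, which still contains $x$; if $x^*(x)\le 0$ then $\|x^*\|-\alpha<-|x^*(x)|$, so $-x$ already lies in $S\cap S_X$ and satisfies $\|x-(-x)\|=2$, making the claim trivial. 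Alternatively, one can avoid the sign issue entirely by invoking the delta-point property with a smaller $\eps'$ chosen against the strict slack in $x^*(y_{i_0})>\|x^*\|-\alpha$, so that dividing by $\|y_{i_0}\|\ge 1-\eps'$ cannot push $\tilde y$ out of the slice. Everything else is fine.
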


Let $X$ be a Banach space. Recall that a Schauder basis $(e_i)_{i \in \N}$ of
$X$ is called \emph{unconditional} if for every $x \in X$ its
expansion $x = \sum_{i \in \N}  x_i e_i$ converges unconditionally.
If, moreover,
$\|\sum_{i \in \N} \theta_i x_i e_i\| = \|\sum_{i \in \N} x_i e_i\|$
for any $x = \sum_{i \in \N}  x_i e_i \in X$
and any sequence of signs $(\theta_i)_{i \in \N}$,
then $(e_i)_{i \in \N}$ is called $1$-unconditional.
A Schauder basis is called \emph{subsymmetric},
or \emph{$1$-subsymmetric}, if it is
unconditional and
$\|\sum_{i \in \N} \theta_ix_ie_{k_i}\| = \|\sum_{i \in \N} x_i e_i\|$ for
any $x = \sum_{i \in \N}  x_i e_i \in X$,
any sequence of signs $(\theta_i)_{i \in \N}$, and
any infinite increasing sequence of naturals $(k_i)_{i \in \N}$.
Trivially a subsymmetric basis is $1$-unconditional.
In the following we will assume that the basis $(e_i)_{i \in \N}$
is normalized, i.e. $\|e_i\| = 1$ for all $i \in \N$.
With $(e^*_i)_{i \in \N}$ we denote the conjugate in $X^*$
to the basis $(e_i)_{i \in \N}$.
Clearly $(e^*_i)_{i \in \N}$ is a $1$-unconditional basic sequence
whenever $(e_i)_{i \in \N}$ is.
When studying Daugavet-points or delta-points in
a Banach space $X$ with $1$-unconditional basis $(e_i)_{i \in \N}$
we can restrict our investigation
to the positive cone $K_X$ generated by the basis, where
\begin{equation*}
  K_X = \left\{x = \sum_{i \in \N} x_i e_i : x_i \ge 0\right\}
  = \{x \in X : e^*_i(x) \ge 0\}.
\end{equation*}
The reason for this is that for every sequence of signs
$\theta = (\theta_i)_{i \in \N}$ the operator
$T_\theta : X \to X$ defined by
$T_\theta(\sum_{i \in \N} x_i e_i) = \sum_{i \in \N} \theta_i x_i e_i$
is a linear isometry.
Hence $x = \sum_{i \in \N} x_i e_i$ is a Daugavet-point (resp. delta-point)
if and only if $|x| = \sum_{i \in \N} |x_i| e_i$ is.

The following result is well-known.
\begin{prop}
  \label{prop:1-unc-mult-bounded-seq}
  Let $X$ be a Banach space with a $1$-unconditional basis
  $(e_i)_{i \in \N}$.
  If $\sum_{i \in \N} b_ie_i$ is convergent and $|a_i| \le |b_i|$ for
  all $i$, then $\sum_{i \in \N} a_ie_i$ is convergent and
  \begin{align*}
    \big\|\sum_{i \in \N} a_ie_i\big\|
    \le \big\|\sum_{i \in \N} b_ie_i\big\|.
  \end{align*}
  Moreover $\|P_A\| = 1$ where, for $A \subset \N$,
  $P_A$ is the projection defined by
  \begin{equation*}
    P_A(\sum_{i \in \N} x_i e_i) = \sum_{i \in A} x_i e_i.
  \end{equation*}
\end{prop}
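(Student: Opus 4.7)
The plan is to first establish the inequality for finitely supported sequences and then bootstrap to the general case using unconditional convergence; the projection bound falls out as a special case. For the finite case, I would fix $N \in \N$ and, for each $i \le N$ with $b_i \neq 0$, set $\lambda_i = a_i/b_i$ (and $\lambda_i = 0$ if $b_i = 0$), so that $a_i = \lambda_i b_i$ and $|\lambda_i| \le 1$. The elementary fact that the cube $[-1,1]^N$ is the convex hull of its $2^N$ vertices $\{-1,+1\}^N$ allows me to write $(\lambda_1,\dots,\lambda_N) = \sum_j \mu_j \theta^{(j)}$ with $\theta^{(j)} \in \{-1,+1\}^N$, $\mu_j \ge 0$, $\sum_j \mu_j = 1$. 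Then
\[
\sum_{i=1}^N a_i e_i = \sum_j \mu_j \sum_{i=1}^N \theta^{(j)}_i b_i e_i,
\]
and $1$-unconditionality ensures $\|\sum_{i=1}^N \theta^{(j)}_i b_i e_i\| = \|\sum_{i=1}^N b_i e_i\|$ for every $j$. The triangle inequality then yields $\|\sum_{i=1}^N a_i e_i\| \le \|\sum_{i=1}^N b_i e_i\|$.

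For the infinite case, I would use that, by definition, $(e_i)$ being unconditional means the expansion $\sum_i b_i e_i$ converges unconditionally. Hence for every $\eps > 0$ there is a finite $F \subset \N$ with $\|\sum_{i \in G} b_i e_i\| < \eps$ for every finite $G \subset \N$ disjoint from $F$. The finite-case inequality, applied to the sequences $(a_i)_{i \in G}$ and $(b_i)_{i \in G}$, forces $\|\sum_{i \in G} a_i e_i\| < \eps$ as well, so $\sum_i a_i e_i$ converges unconditionally too. Letting $N \to \infty$ in the finite inequality and using continuity of the norm then gives the desired estimate $\|\sum_i a_i e_i\| \le \|\sum_i b_i e_i\|$.

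For the moreover, applying the inequality with $a_i = x_i$ for $i \in A$ and $a_i = 0$ otherwise gives $\|P_A x\| \le \|x\|$, i.e.\ $\|P_A\| \le 1$. Since $P_A e_i = e_i$ with $\|e_i\| = 1$ for any $i \in A$, we obtain $\|P_A\| = 1$ whenever $A$ is nonempty. I do not anticipate a real obstacle here: the whole argument is a textbook consequence of the extreme-point description of the cube combined with the definition of $1$-unconditionality. The only mildly subtle step is the passage from plain convergence of $\sum_i b_i e_i$ to unconditional convergence, but this is already built into the definition of an unconditional basis adopted in the paper.
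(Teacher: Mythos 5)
Your argument is correct. The paper does not include a proof of this proposition (it is stated as ``well-known''), so there is nothing in the source to compare against; the route you take --- writing the scalar sequence $(\lambda_i)$ with $|\lambda_i|\le 1$ as a convex combination of the vertices of the cube $\{-1,+1\}^N$, invoking $1$-unconditionality on each vertex, and then passing from finite supports to the general case via the Cauchy criterion for unconditional convergence --- is the standard textbook proof and is fully rigorous. The only point worth flagging, which you already note yourself, is that $\|P_A\|=1$ requires $A\neq\emptyset$ (for $A=\emptyset$ one has $P_A=0$); this is clearly the intended reading in the paper.
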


From this we immediately get a fact that will
be applied several times throughout the paper.

\begin{fact}
  \label{fact:1-unc}
  Let $X$ be a Banach space with a $1$-unconditional basis
  $(e_i)_{i \in \N}$
  and let $x,y \in X$ and $E \subset \N$.
  Then the following holds.
  \begin{itemize}
  \item
    If $|x_i| \le |y_i|$ and $\sgn x_i = \sgn y_i$
    for all $i \in E$,
    then $\norm{y - P_Ex} \le \norm{y}$.
  \end{itemize}
\end{fact}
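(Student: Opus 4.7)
The plan is to reduce this to Proposition~\ref{prop:1-unc-mult-bounded-seq} by a coordinate-wise comparison of the expansion of $y - P_E x$ with the expansion of $y$. Writing $x = \sum_{i \in \N} x_i e_i$ and $y = \sum_{i \in \N} y_i e_i$, we have
\begin{equation*}
  y - P_E x = \sum_{i \notin E} y_i e_i + \sum_{i \in E} (y_i - x_i) e_i,
\end{equation*}
so the $i$-th coefficient $c_i$ of $y - P_E x$ equals $y_i$ when $i \notin E$ and $y_i - x_i$ when $i \in E$.

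For $i \notin E$ we trivially have $|c_i| = |y_i|$. For $i \in E$, the assumption $\sgn x_i = \sgn y_i$ together with $|x_i| \le |y_i|$ means that $x_i$ is a point on the segment between $0$ and $y_i$ (in $\R$), so $y_i - x_i$ also lies on that segment; equivalently, $|y_i - x_i| = |y_i| - |x_i| \le |y_i|$. Hence $|c_i| \le |y_i|$ for every $i \in \N$.

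Since $\sum_{i \in \N} y_i e_i$ converges in $X$, Proposition~\ref{prop:1-unc-mult-bounded-seq} applied to the sequences $(c_i)$ and $(y_i)$ yields that $\sum_{i \in \N} c_i e_i$ converges and
\begin{equation*}
  \norm{y - P_E x} = \big\|\sum_{i \in \N} c_i e_i\big\|
  \le \big\|\sum_{i \in \N} y_i e_i\big\| = \norm{y},
\end{equation*}
as desired. There is no real obstacle here; the only point to be careful about is checking that the sign condition, which is stated only on $E$, is used precisely where it is needed, namely to bound $|y_i - x_i|$ by $|y_i|$ on $E$, while on the complement of $E$ the coefficients of $y - P_E x$ and $y$ coincide and no sign hypothesis is required.
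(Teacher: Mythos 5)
Your proof is correct and is exactly the deduction the paper has in mind: the paper states this Fact as an immediate consequence of Proposition~\ref{prop:1-unc-mult-bounded-seq}, and your coordinate-wise comparison (using the sign condition only on $E$ to get $|y_i - x_i| \le |y_i|$, with equality of coefficients off $E$) is precisely that "immediate" argument spelled out. Nothing is missing.
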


The upshot of Fact~\ref{fact:1-unc} is that it can be
used to find an
upper bound for the distance between $x \in S_X$ and
elements in a given subset of the unit ball.
Indeed, suppose we can find $E \subseteq \N$,
$\eta > 0$ and a subset $S$ of the unit ball
such that $\|x - P_E x\| < 1 -\eta$ and
the assumption in Fact~\ref{fact:1-unc}
holds for any $y \in S$.
Then
\[
\norm{x - y} \le \norm{x - P_Ex} + \norm{y - P_Ex} < 2 - \eta.
\]
If such a set $S$ is a slice (resp.\ a slice containing $x$), then $x$ cannot be a
Daugavet-point (resp.\ delta-point). We will see in
Theorem~\ref{thm:subsym-no-delta} that any unit sphere element in a
space with a subsymmetric basis, is contained in a slice of the above type.
Our tool to investigate the existence of slices of
this type in a Banach space with a $1$-unconditional basis, are
certain families of subsets of the support of the elements in the space.

\begin{rem}
  If only the moreover part of
  Proposition~\ref{prop:1-unc-mult-bounded-seq} holds,
  then the basis is called $1$-suppression unconditional.
  In this case the conclusion of
  Proposition~\ref{prop:1-unc-mult-bounded-seq}
  still holds if $\sgn a_i = \sgn b_i$, for all $i$.
  This is all that is needed in Fact~\ref{fact:1-unc}.
  Similarly, one can check that all the results
  about $1$-unconditional bases in
  the rest of this section also holds for a Banach space
  $X$ with a $1$-suppression unconditional basis.
\end{rem}

\begin{defn}
       \label{defn:cx}
       For any Banach space $X$ with $1$-unconditional basis
       $(e_i)_{i \in \N}$ and for
       $x\in X$, define
       \[
       \mx :=\left\{A\subseteq \N : \left\|P_Ax\right\|= \left\| x\right\|,
         \left\|P_Ax-x_ie_i\right\|<\left\| x\right\|,
         \ \mbox{for all}\ i\in A \right\},
       \]
       \[
       \mend(x) := \left\{A\in \mx : \left|A \right|<\infty \right\},
       \]
       and
       \[
       \minfx := \left\{A\in \mx : |A|=\infty \right\}.
       \]
\end{defn}

We can think of $\mx$ as a collection of minimal ``norm-giving'' subsets
of the support of $x$. If for example $X
= c_0$ and $x \in c_0$, then $\mx = \{\{i\}: |x_i| = \norm{x}\}$
while if $X =\ell_p$, $1 \le p < \infty$ and $x \in X$, then $\mx = \{\supp(x)\}$.

Our first observation about the families $\mx$ is that they are always non-empty.

\begin{lem}\label{lem:overkill}
  Let $X$ be a Banach space with $1$-unconditional basis $(e_i)_{i \in \N}$.
  Then $\mx \neq \emptyset$ for all $x \in X$.
\end{lem}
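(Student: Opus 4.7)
The plan is to apply Zorn's lemma to the family
\[
\acF_x := \{A \subseteq \N : \|P_Ax\| = \|x\|\}
\]
ordered by inclusion, searching for a minimal element. The key observation is that $\mx$ coincides with the minimal elements of $\acF_x$: by $1$-unconditionality (Proposition~\ref{prop:1-unc-mult-bounded-seq}), $\|P_Bx\| \le \|P_{A\setminus\{i\}}x\|$ whenever $B \subseteq A\setminus\{i\}$, so $A \in \acF_x$ is minimal if and only if $\|P_Ax - x_ie_i\| = \|P_{A\setminus\{i\}}x\| < \|x\|$ for every $i \in A$. If $x=0$, then $\emptyset \in \mx$ vacuously; otherwise $\N \in \acF_x$, so $\acF_x$ is nonempty.

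The crux is verifying the chain condition. Given a chain $\{A_\alpha\}_{\alpha \in I} \subseteq \acF_x$, decreasing under inclusion, the goal is to show that $A := \bigcap_\alpha A_\alpha$ still lies in $\acF_x$. The main obstacle is that $I$ may be uncountable, so no direct sequential convergence argument applies. My plan is to pass to a countable cofinal sub-chain. Fix any $A_{\alpha_0}$ from the chain; for each $i \in A_{\alpha_0}\setminus A$ choose $\beta_i \in I$ with $i \notin A_{\beta_i}$, and enumerate $A_{\alpha_0}\setminus A = \{i_1,i_2,\dots\}$. Let $B_k$ be the smallest (under $\subseteq$) of $A_{\beta_{i_1}},\dots,A_{\beta_{i_k}}$, which is well defined because the chain is totally ordered. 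Then $(B_k)$ is a decreasing sequence in $\acF_x$. Totality forces $B_k \subseteq A_{\alpha_0}$ for every $k$: if instead $A_{\alpha_0} \subseteq B_k = A_{\beta_{i_j}}$ for some $j \le k$, then $i_j \in A_{\alpha_0} \subseteq A_{\beta_{i_j}}$ contradicts $i_j \notin A_{\beta_{i_j}}$. By construction $A \subseteq B_k$ for every $k$ and no element of $A_{\alpha_0}\setminus A$ survives in all $B_k$, so $\bigcap_k B_k = A$.

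With a countable cofinal sub-chain in hand, norm convergence is now within reach. Write $P_{B_k}x - P_Ax = P_{B_k\setminus A}x$ and note that the sets $B_k\setminus A$ decrease to $\emptyset$. Because $\sum_i x_ie_i$ converges unconditionally, for every $\eps>0$ there is a finite $F \subseteq \N$ with $\|P_Cx\| < \eps$ for all $C$ with $C \cap F = \emptyset$; for $k$ large enough $(B_k\setminus A) \cap F = \emptyset$, so $\|P_{B_k\setminus A}x\| < \eps$. Thus $\|P_Ax\| = \lim_k \|P_{B_k}x\| = \|x\|$, giving $A \in \acF_x$. Zorn's lemma then produces a minimal element of $\acF_x$, which by the first paragraph is the required element of $\mx$. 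The only nontrivial step is the chain condition for possibly uncountable chains, resolved by the cofinal countable sub-chain argument together with unconditional convergence of the basis expansion.
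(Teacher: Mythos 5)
Your proof is correct, but it takes a genuinely different route from the paper's. The paper runs an explicit greedy recursion: starting from $\supp(x)$ it repeatedly deletes the smallest index whose removal preserves the norm, obtaining a decreasing sequence $A_0 \supseteq A_1 \supseteq \cdots$ of norm-attaining sets, and reads off both $\norm{P_Ax}=\norm{x}$ and the minimality condition for $A=\bigcap_k A_k$ directly from the greedy choices. You instead apply Zorn's lemma to the family of all norm-attaining subsets, correctly identify $\mx$ with its minimal elements (both directions of that equivalence do follow from Proposition~\ref{prop:1-unc-mult-bounded-seq}), and thereby get minimality for free --- at the price of having to verify the chain condition for arbitrary, possibly uncountable, chains. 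Your reduction to a countable coinitial subchain $(B_k)$ with $\bigcap_k B_k = \bigcap_\alpha A_\alpha$ is the genuinely new ingredient and it is sound: $B_k \subseteq A_{\alpha_0}$ by total orderedness, $B_k\setminus A$ decreases to $\emptyset$, and $\norm{R_nx}\to 0$ together with $1$-unconditionality gives $\norm{P_{B_k}x}\to\norm{P_Ax}$. In the end both arguments rest on the same analytic fact --- a decreasing sequence of norm-attaining sets has norm-attaining intersection because tails of the basis expansion vanish in norm --- so the trade-off is structural: the paper's version is constructive and avoids choice, while yours is lighter on minimality bookkeeping but needs Zorn plus the cofinality argument. (Two harmless loose ends: the enumeration of $A_{\alpha_0}\setminus A$ may be finite or empty, in which case the subchain stabilizes at $A$ itself, and a chain need not literally be ``decreasing indexed'' --- but your argument only uses that it is totally ordered.)
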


\begin{proof}
  Let $x\in X$.
  Either $A_0 := \supp(x) \in \mx$ or
  there exists a smallest $n_1\in A_0$ such
  that if we define $A_1 = A_0 \setminus \{n_1\}$,
  then
  $\left\|P_{A_1}  x \right\| = \left\|x\right\|$ and
  \begin{equation*}
    \left\|P_{A_1}  x - x_{j}e_j\right\|< \left\|x\right\|
    \ \mbox{for all}\ j
    \in A_0 \cap \{1,\ldots,n_1-1\}.
  \end{equation*}
  Suppose we have found $n_1 < \cdots < n_{k-1}$
  such that $A_{k-1}  = A_{k-2} \setminus \{n_{k-1}\}$
  satisfies $\|P_{A_{k-1} }  x\| = \|x\|$ and
  $\|P_{A_{k-1} }  x - x_j e_j\| < \|x\|$ for all
  $j \in A_{k-1}  \cap \{1,\ldots,n_{k-1}-1\}$.
  Then either $A_{k-1}  \in \mx$ or there exists
  a smallest integer $n_k$ greater than $n_{k-1}$ such that
  $A_k = A_{k-1}(x) \setminus \{n_k\}$ satisfies
  $\left\|P_{A_k}  x\right\| = \left\|x\right\|$
  and
  \begin{equation*}
    \left\|P_{A_k}  x - x_j e_j \right\| < \left\|x\right\|
    \ \mbox{for all}\ j
    \in A_k \cap \{1,\ldots,n_k-1\}.
  \end{equation*}
  Either this process terminates and $A_k\in \mx$, or we
  get a set $N = \left\{n_i \right\} _{i=1}^\infty$.
  Let $A = \bigcap_k A_k = \supp(x)\setminus N$ and note that
  $\left\|P_A x\right\| = \left\|x\right\|$.
  If $j \in A$, find $k$ such that $j < n_k$, then
  by $1$-unconditionality
  \begin{equation*}
    \|P_A x - x_j e_j\|
    \le
    \| P_{A_k}x - x_j e_j \|
    < \|x\|
  \end{equation*}
  and $A \in \mx$.
\end{proof}

Our next goal is to prove that certain classes of subsets of
$\mend(x)$ and $\minfx$ are finite (see Lemma~\ref{lem:k_nendelig}
below). We will use the next result as a stepping stone. In the proof,
and throughout the paper, we will assume that the sets
$A = \left\{a_1, a_2, \ldots \right\} \in \mx$ are ordered
so that $a_1 < a_2 < \cdots < a_n <\cdots$, and we will
use $A(n)$ to denote the set $\{a_1,\ldots,a_n\}$.

\begin{lem}\label{lem:i-frste-c-infty}
  Let $X$ be a Banach space with $1$-unconditional basis
  $(e_i)_{i \in \N}$. If $x\in X$,
  then for every $n \in \N$,
  \begin{enumerate}
  \item\label{item:lem-ci-1}
    $\left|\left\{A(n) : A\in \mx,
        \ \left|A \right|>n  \right\} \right|< \infty$;
  \item\label{item:lem-ci-2}
    $\left|\left\{A\in \mx
        : \left|A \right|\leq n \right\} \right| < \infty$.
  \end{enumerate}
  In particular,
  $\left|\bigcup_{D\in \minfx}\left\{D(n) \right\} \right|<\infty$.
\end{lem}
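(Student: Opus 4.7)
The plan is to prove (i) and (ii) together by a single contradiction argument, and then deduce the ``in particular'' clause directly from (i). Assume $x \ne 0$ (else $\mx = \{\emptyset\}$ and both claims are trivial). Toward contradiction, suppose either there are infinitely many distinct sets $A(n)$ with $A \in \mx$, $|A| > n$ (case (i)), or infinitely many $A \in \mx$ with $|A| \le n$ (case (ii); a pigeonhole over the at most $n$ possible sizes reduces this to infinitely many of a common size $m \le n$). In both cases I obtain a sequence $(A_k) \subseteq \mx$ with $|A_k| \ge \ell$, where $\ell = n$ in (i) and $\ell = m$ in (ii), and whose $\ell$-truncations $A_k(\ell)$ are pairwise distinct. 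By a diagonal extraction in the one-point compactification $\N \cup \{\infty\}$, pass to a subsequence along which $a_i^{(k)} \to b_i \in \N \cup \{\infty\}$ for every $i = 1,\ldots,\ell$. Distinctness of the $A_k(\ell)$ forces some $b_i = \infty$; let $j$ be the smallest such index. Then $a_1^{(k)},\ldots,a_{j-1}^{(k)}$ stabilize to $b_1,\ldots,b_{j-1}$, while $a_j^{(k)} \to \infty$.

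Set $B = \{b_1,\ldots,b_{j-1}\}$ (empty when $j = 1$). The first key step is to show $\|P_B x\| = \|x\|$. Decomposing $P_{A_k} x = P_B x + P_{A_k \setminus B} x$ and noting $A_k \setminus B \subseteq \{i : i \ge a_j^{(k)}\}$, Proposition~\ref{prop:1-unc-mult-bounded-seq} gives
\[
\|P_{A_k \setminus B} x\| \le \|P_{\{i \ge a_j^{(k)}\}} x\| \longrightarrow 0 \qquad (k \to \infty),
\]
the tail going to $0$ because the basis expansion of $x$ converges. Since $\|P_{A_k} x\| = \|x\|$ for every $k$, passing to the limit yields $\|P_B x\| = \|x\|$. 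The case $j = 1$ would force $\|x\| = 0$, contrary to our assumption; hence $j \ge 2$ and $B$ is nonempty.

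The contradiction now comes from the second defining property of $\mx$ applied at the escaping coordinate: since $a_j^{(k)} \in A_k$, we have $\|P_{A_k} x - x_{a_j^{(k)}} e_{a_j^{(k)}}\| < \|x\|$, and because $a_j^{(k)} > \max B$, $B \subseteq A_k \setminus \{a_j^{(k)}\}$, so $1$-unconditionality gives
\[
\|P_B x\| \le \|P_{A_k} x - x_{a_j^{(k)}} e_{a_j^{(k)}}\| < \|x\|,
\]
contradicting $\|P_B x\| = \|x\|$. Finally, the ``in particular'' assertion is immediate: every $D \in \minfx$ is infinite, so $\{D(n) : D \in \minfx\} \subseteq \{A(n) : A \in \mx, |A| > n\}$, which is finite by (i). The main obstacle is organizing the unified setup so that the escaping coordinate $a_j^{(k)}$ does double duty, making the tail vanish (which forces $\|P_B x\| = \|x\|$) while simultaneously providing the index at which the strict inequality in the $\mx$-definition delivers the contradiction.
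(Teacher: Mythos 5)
Your proof is correct, and it takes a genuinely different route from the paper's. The paper argues by induction on $n$: having finitely many truncations $A(n-1)$, it sets $s_{n-1}=\max\|P_{A(n-1)}x\|<\|x\|$ (the strict inequality coming from the minimality condition in the definition of $\mx$ together with monotonicity), picks $k$ with $\|(I-P_{\{1,\dots,k\}})x\|<\|x\|-s_{n-1}$, and concludes via the triangle inequality that $\max A(n)\le k$ for every admissible $A$; part \ref{item:lem-ci-2} then falls out of the same quantity $s_{n-1}$. You instead run a single contradiction argument for both parts: a diagonal extraction in $\N\cup\{\infty\}$ produces a first ``escaping'' coordinate $a_j^{(k)}\to\infty$, the stabilized prefix $B$ is shown to carry the full norm because the tail beyond $a_j^{(k)}$ vanishes, and then the condition $\|P_{A_k}x-x_{a_j^{(k)}}e_{a_j^{(k)}}\|<\|x\|$ applied at that same escaping coordinate yields $\|P_Bx\|<\|x\|$, a contradiction. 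I checked the delicate points — the distinctness of the truncations does force some limit to be $\infty$, the decomposition $P_{A_k}x=P_Bx+P_{A_k\setminus B}x$ is valid for large $k$ once the prefix has stabilized, and the $j=1$ case is correctly disposed of via $x\ne 0$ — and they all hold. The trade-off: the paper's induction is quantitative and produces an explicit bound $k$ with $\max A(n)\le k$, whereas your argument is purely qualitative but uniform over \ref{item:lem-ci-1} and \ref{item:lem-ci-2} and is in fact closer in spirit to the compactness argument the paper itself uses later for Lemma~\ref{lem:k_nendelig}. Both proofs ultimately rest on the same two ingredients: the vanishing of the tail of the basis expansion and the strict inequality built into the definition of $\mx$.
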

\begin{proof}
  Let us prove \ref{item:lem-ci-1} inductively.
  For $k \in \N$, let $R_k = I - P_{N_k}$,
  where $N_k = \{1,\ldots,k\}$.
  For $n = 1$ the result follows from
  $\left\|R_k x\right\|  \rightarrow 0$.

  Now assume that
  $\left|\left\{A(n-1):
      A\in \mx,\ \left|A \right| > n-1 \right\}
  \right|< \infty$,
  and let
  $s_{n-1} :=
  \max\left\{\left\|P_{A\left(n-1 \right)}x\right\| : A\in
    \mx, \left|A \right|>n-1  \right\}< \left\|x\right\|$.
  Find $k \in \N$  such that
  $\left\|R_k x\right\| < \left\|x\right\| - s_{n-1}$.
  Then by the triangle inequality, it follows that
  $\max A(n)\leq k$ for all $A\in \mx$
  with $\left|A \right|> n$.

  For \ref{item:lem-ci-2},
  let $A\in \mx$ with $\left|A \right|= n$.
  Then $\left\|P_{A\left(n-1 \right)}x\right\| \leq s_{n-1}$,
  and thus $\max A \leq k$, where as above
  $k\in \N$ is such that $\|R_k x\| < \|x\|- s_{n-1}$.
\end{proof}

In order to find the sets $E \subseteq \N$ mentioned
in the remarks following Fact~\ref{fact:1-unc}
we need the following families of subsets of $\mx$.

\begin{defn}
       \label{defn:fn-gn-en}
       Let $X$ have $1$-unconditional basis
       $(e_i)_{i \in \N}$. Let $x\in S_X$ and define
       \begin{align*}
       \mathcal{F}_n(x)
       &:= \left\{A\in \mend(x) : A \cap D(n) \neq D(n),
       \ \mbox{for all}\ D\in \minfx\right\},\\
       \mathcal{G}_n(x)
       &:= \mathcal{F}_n(x) \cup \bigcup_{D \in \minfx} \{D(n)\},\\
       \mathcal{E}_n(x)
       &:= \left\{E\subset \!\! \bigcup_{A\in
              \mathcal{G}_n} \!\!A : E\cap A \neq
       \emptyset, \ \mbox{for all}\ A\in \mathcal{G}_n
       \right\}.
       \end{align*}
       If it is clear from the context what element $x$ we are considering,
       we will simply denote these sets by $\mathcal{F}_n, \mathcal{G}_n,$
       and $\mathcal{E}_n$.
\end{defn}

It is pertinent with a couple of comments about these families of sets. Trivially, if
$\minfx = \emptyset$, then $\mathcal{G}_n =
\mathcal{F}_n = \mx$ for all
$n\in \N$. We can think of the elements of $\mathcal{E}_n$ as
essential for the norm of $x$, i.e.
$\norm{x - P_Ex} < \norm{x}$ for all $E \in \mathcal{E}_n$.
According to Lemma~\ref{lem:G_N-minker} below the drop in norm is also
uniformly bounded away from $0$.
The main reason for this is that $\mathcal{F}_n$ and $\mathcal{E}_n$
are finite for all $n \in \N$. We will
prove this now.

\begin{lem}\label{lem:k_nendelig}
  Let $X$ have $1$-unconditional basis
  $(e_i)_{i \in \N}$. If $x\in S_X$,
  then for all $n\in \N$,
  \begin{enumerate}
  \item\label{item:k_n-1}
    $|\mathcal{F}_n|<\infty$;
  \item\label{item:k_n-2}
    $|\mathcal{E}_n|<\infty$.
  \end{enumerate}
  In particular, if $\minfx = \emptyset$, then $|\mx|<\infty$.
\end{lem}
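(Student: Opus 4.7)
The plan is to prove (i) first, since (ii) and the ``in particular'' clause follow with only a few lines. Once (i) is in hand, Lemma~\ref{lem:i-frste-c-infty} tells us that $\bigcup_{D \in \minfx}\{D(n)\}$ is finite, so $\mathcal{G}_n = \mathcal{F}_n \cup \bigcup_{D \in \minfx}\{D(n)\}$ is finite and each of its members is a finite subset of $\N$. Thus $U := \bigcup_{A \in \mathcal{G}_n} A$ is a finite union of finite sets, so $\mathcal{E}_n \subseteq 2^U$ is finite. If in addition $\minfx = \emptyset$, the condition defining $\mathcal{F}_n$ is vacuous, so $\mathcal{F}_n = \mend(x) = \mx$, and (i) yields $|\mx| < \infty$.

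For (i), the plan is a proof by contradiction. Suppose $|\mathcal{F}_n| = \infty$ and pick distinct $A_m \in \mathcal{F}_n$. By the reduction preceding Proposition~\ref{prop:1-unc-mult-bounded-seq} we may assume $x \in K_X$, i.e., $x_i \geq 0$ for all $i$. A key observation in the positive cone is that \emph{any two elements of $\mx$, one contained in the other, must coincide}: if $B, B' \in \mx$ with $B \subsetneq B'$ and $j \in B' \setminus B$, then $B \subseteq B' \setminus \{j\}$, so $\|P_{B' \setminus \{j\}} x\| \geq \|P_B x\| = \|x\|$ by Proposition~\ref{prop:1-unc-mult-bounded-seq} and positivity, contradicting $B' \in \mx$.

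We then split on $\sup_m |A_m|$. If it is finite, the $A_m$ lie in the finite set from Lemma~\ref{lem:i-frste-c-infty}\ref{item:lem-ci-2}, contradicting their distinctness. Otherwise, after passing to a subsequence, $|A_m| \to \infty$. By Lemma~\ref{lem:i-frste-c-infty}\ref{item:lem-ci-1} the collection $\{A(k) : A \in \mx,\ |A| > k\}$ is finite for each $k$, so a standard diagonal/pigeonhole argument produces a subsequence $(A_{m_k})$ with $|A_{m_k}| > k$ and indices $a_1 < a_2 < \cdots$ such that $A_{m_k}(k) = \{a_1, \ldots, a_k\}$ for every $k$. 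Set $D = \{a_1, a_2, \ldots\}$. Writing $A_{m_k} = D(k) \cup (A_{m_k} \setminus D(k))$ with the second piece contained in $\{i : i > a_k\}$, the triangle inequality gives $\|x\| \leq \|P_{D(k)} x\| + \|R_{a_k} x\|$, whence $\|P_{D(k)} x\| \to \|x\|$ and therefore $\|P_D x\| = \|x\|$.

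The crux, and the step I expect to be the main technical obstacle, is the endgame. Running the construction from the proof of Lemma~\ref{lem:overkill} with $D$ in place of $\supp x$ produces $D' \subseteq D$ with $D' \in \mx$. If $D' \in \minfx$, pick $p$ large enough that $D'(n) \subseteq D(p) = A_{m_p}(p) \subseteq A_{m_p}$; then $A_{m_p} \cap D'(n) = D'(n)$, contradicting $A_{m_p} \in \mathcal{F}_n$. If instead $D'$ is finite, choose $k$ large enough that $\max D' \leq a_k$ and $|A_{m_k}| > |D'|$; then $D' \subseteq D(k) \subseteq A_{m_k}$, and the key observation forces $D' = A_{m_k}$, contradicting $|A_{m_k}| > |D'|$. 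Either outcome yields a contradiction, so $|\mathcal{F}_n| < \infty$.
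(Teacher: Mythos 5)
Your proof is correct and follows essentially the same strategy as the paper's: assume $|\mathcal{F}_n|=\infty$, extract a subsequence whose members agree on longer and longer initial segments, show the resulting limit set $D$ satisfies $\|P_Dx\|=1$, apply Lemma~\ref{lem:overkill} to produce $D'\in M(P_Dx)\subseteq \mx$, and derive a contradiction either with the defining condition of $\mathcal{F}_n$ (if $D'$ is infinite) or with the minimality built into $\mx$ (if $D'$ is finite). The only cosmetic differences are that you replace the paper's appeal to compactness of $\{0,1\}^{\N}$ by a pigeonhole/diagonal extraction via Lemma~\ref{lem:i-frste-c-infty}, and you make explicit the fact that $\mx$ contains no properly nested pair, which the paper uses implicitly.
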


\begin{proof}
  \ref{item:k_n-1}.
  There exists $N \in \N$ such that
  $\max_{D\in \minfx}D(n) \le N$
  by Lemma~\ref{lem:i-frste-c-infty}.

       Assume for contradiction that $|\mathcal{F}_n| = \infty$.
       Then there exists a sequence $(A_k) \subset \mathcal{F}_n$
       such that $|A_k| \ge k$.
       By compactness of $\{0,1\}^{\N}$ and passing
       to a subsequence if necessary,
       we may assume that
       $A_k \rightarrow A \in \N$ pointwise and
       $A \cap \{1,\ldots,N\} = A_k \cap \{1,\ldots,N\}$
       for all $k$.
       In particular $\left\|P_A x\right\| = 1$.
       By Lemma~\ref{lem:overkill}, there exists $B\subseteq A$,
       such that $B\in M(P_A x) \subseteq \mx$.
       Since $A \cap \{1,\ldots,N\} = A_k \cap \{1,\ldots,N\},$
       we have $|B| < \infty$ by definition of $\mathcal{F}_n$. Since $B$
       is finite $A_k \cap B$ is eventually constant. Thus for some $k \in \N$ we have
       $B \subsetneq A_k \in \mx,$ a contradiction.

       Finally, \ref{item:k_n-2} follows from \ref{item:k_n-1}
       and Lemma~\ref{lem:i-frste-c-infty}.
\end{proof}

With the knowledge that the cardinality of $\mathcal{E}_n$ is finite
for every $n \in \N$, we now obtain the following result.

\begin{lem}\label{lem:G_N-minker}
  Let $X$ be a Banach space with $1$-unconditional basis
  $(e_i)_{i \in \N}$.
  If $x\in S_X$, then
  \begin{enumerate}
  \item\label{item:G_N-minker-1}
    $\|x - P_E x\| < 1$
    if $E \cap A \neq \emptyset$
    for all $A \in \mx$;
  \item\label{item:G_N-minker-2}
    for any $n \in \N$ there exists
    $\gamma_n > 0$ such that
    \begin{equation*}
      \max\limits_{E \in \mathcal{E}_n} \left\|x - P_E x\right\|
      = 1 - \gamma_n.
    \end{equation*}
  \end{enumerate}
\end{lem}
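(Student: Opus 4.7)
The plan is to reduce part \ref{item:G_N-minker-2} to part \ref{item:G_N-minker-1} via a short combinatorial observation, and to prove part \ref{item:G_N-minker-1} by contradiction using Lemma~\ref{lem:overkill}.

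For \ref{item:G_N-minker-1}, I would argue by contradiction: suppose $E$ meets every $A \in \mx$ but $\|x - P_E x\| = 1$. Set $y := x - P_E x = P_{\N \setminus E} x$, so $\|y\| = 1$. Applying Lemma~\ref{lem:overkill} to $y$ yields a set $B \in M(y)$, and by construction $B \subseteq \supp(y) \subseteq \N \setminus E$. The key observation is that $B$ then also belongs to $\mx$: since $y$ and $x$ agree on $\N \setminus E \supseteq B$, we have $P_B y = P_B x$, hence $\|P_B x\| = \|y\| = 1 = \|x\|$ and, for every $i \in B$, $\|P_B x - x_i e_i\| = \|P_B y - y_i e_i\| < 1$. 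But then $B \in \mx$ with $B \cap E = \emptyset$, contradicting the hypothesis.

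For \ref{item:G_N-minker-2}, the plan is to show that every $E \in \mathcal{E}_n$ meets every $A \in \mx$; once this is established, \ref{item:G_N-minker-1} gives $\|x - P_E x\| < 1$ for each such $E$, and Lemma~\ref{lem:k_nendelig}\ref{item:k_n-2} (finiteness of $\mathcal{E}_n$) lets me take a maximum and write it as $1 - \gamma_n$ with $\gamma_n > 0$. To verify the meeting property I would distinguish three exhaustive cases for $A \in \mx$: if $A \in \mathcal{F}_n$, then $A \in \mathcal{G}_n$ and $E \cap A \neq \emptyset$ directly from the definition of $\mathcal{E}_n$; if $A \in \mend(x) \setminus \mathcal{F}_n$, then by definition of $\mathcal{F}_n$ there exists $D \in \minfx$ with $D(n) \subseteq A$, and since $E \cap D(n) \neq \emptyset$ we get $E \cap A \neq \emptyset$; finally, if $A \in \minfx$, then $A(n) \in \mathcal{G}_n$ and $E \cap A(n) \subseteq E \cap A$ is nonempty.

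The main ``obstacle'' is really just getting the bookkeeping right in part \ref{item:G_N-minker-2}; part \ref{item:G_N-minker-1} is essentially one application of Lemma~\ref{lem:overkill} once one notices that $x - P_E x$ is itself a positive truncation of $x$ on which the hypotheses of that lemma can be re-deployed.
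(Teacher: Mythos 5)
Your proof is correct and follows essentially the same route as the paper: part \ref{item:G_N-minker-1} via Lemma~\ref{lem:overkill} applied to $x - P_E x$ to produce a $B \in \mx$ disjoint from $E$, and part \ref{item:G_N-minker-2} by combining \ref{item:G_N-minker-1} with the finiteness of $\mathcal{E}_n$ from Lemma~\ref{lem:k_nendelig}. You merely spell out two steps the paper leaves implicit (that $M(x-P_Ex)\subseteq \mx$ when the norm is preserved, and the three-case check that each $E\in\mathcal{E}_n$ meets every $A\in\mx$), and both verifications are accurate.
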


\begin{proof}
  \ref{item:G_N-minker-1}.
  Assume that $E \subseteq \N$ with $E \cap A \neq \emptyset$
  for all $A \in \mx$ such that $\|x - P_E x\| = 1$.
  By Lemma~\ref{lem:overkill} there exists
  $B \in M(x - P_E x)$.
  But $M(x - P_E x) \subseteq \mx$ since $\|x - P_E x\| = 1$
  and this gives us the contradiction $B \cap E = \emptyset$.

  Any $E \in \mathcal{E}_n$ satisfies
  $E \cap A \neq \emptyset$ for all $A \in \mx$
  and $\mathcal{E}_n$ is finite, so \ref{item:G_N-minker-2}
  follows from \ref{item:G_N-minker-1}.
\end{proof}

Let $X$ be a Banach space and $x \in S_X$.
If $x$ is a delta-point, then for every slice $S$
with $x \in S$, we have that $x$ is at one end
of a line segment in $S$ with length as close
to $2$ as we want.
Suppose we replace the slice $S$ with
a non-empty relatively weakly open subset $W$ of $B_X$
with $x \in W$.
If $X$ has the Daugavet property, then $x$ is at one
end of a line segment in $W$ with length as close
to $2$ as we want (\cite[Lemma~3]{MR1784413}).
Next we show that this is never the case
if $X$ has a $1$-unconditional basis.

\begin{prop}\label{prop:1-unc-no-b-points}
  Let $X$ be a Banach space with $1$-unconditional basis
  $(e_i)_{i \in \N}$.
  If $x \in S_X$, then there exist $\delta>0$ and
  a relatively weakly open subset $W$, with $x\in W$,
  such that $\sup_{y \in W} \left\|x-y\right\| < 2 - \delta$.
\end{prop}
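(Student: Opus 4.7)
The plan is to produce a finite set $F \subseteq \N$ with $\|x - P_F x\| < 1$ and to take $W$ to be the basic relatively weakly open neighborhood of $x$ in $B_X$ that controls the coordinates of $y$ indexed by $F$. The heuristic is that, once the $F$-coordinates of $y$ are forced to be close to those of $x$, the tail $\|(I-P_F)(x-y)\|$ can be bounded by $\|(I-P_F)x\| + \|y\|$, which is strictly less than $2$, leaving room for a $\delta$.

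To build $F$, I invoke the families of Definition~\ref{defn:fn-gn-en} with $n = 1$. Lemma~\ref{lem:overkill} gives $\mx \neq \emptyset$, hence $\mathcal{G}_1 \neq \emptyset$, and I set $F := \bigcup_{A \in \mathcal{G}_1} A$. This set is finite: $\mathcal{F}_1$ is a finite family of finite sets by Lemma~\ref{lem:k_nendelig}\ref{item:k_n-1}, while $\bigcup_{D \in \minfx} \{D(1)\}$ is a finite collection of singletons by the ``in particular'' part of Lemma~\ref{lem:i-frste-c-infty}. Moreover, $F \cap A \neq \emptyset$ for every $A \in \mx$: if $A \in \minfx$ then $A(1) \in \mathcal{G}_1$, so $A(1) \subseteq F$ and $A \cap F \supseteq A(1)$; if $A \in \mathcal{F}_1$ then $A \subseteq F$ directly; and if $A \in \mend(x) \setminus \mathcal{F}_1$ then the definition of $\mathcal{F}_1$ supplies some $D \in \minfx$ with $D(1) \subseteq A$, and $D(1) \subseteq F$. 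Therefore Lemma~\ref{lem:G_N-minker}\ref{item:G_N-minker-1} produces $\gamma := 1 - \|x - P_F x\| > 0$.

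Next, choose $\eps > 0$ with $\eps|F| < \gamma/2$ and define
\[
W := \{y \in B_X : |e^*_i(y - x)| < \eps \text{ for all } i \in F\},
\]
a relatively weakly open neighborhood of $x$ in $B_X$. For any $y \in W$, splitting $x - y$ along $F$ and its complement, the triangle inequality together with Proposition~\ref{prop:1-unc-mult-bounded-seq} gives
\begin{align*}
\|x - y\| &\leq \|P_F(x - y)\| + \|(I - P_F)x\| + \|(I - P_F)y\|\\
&\leq \sum_{i \in F} |e^*_i(x - y)| + (1 - \gamma) + \|y\|\\
&< \eps|F| + (1 - \gamma) + 1 \;<\; 2 - \gamma/2,
\end{align*}
so $\delta := \gamma/4$ works.

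The only real obstacle is the definition-chasing in the middle paragraph: verifying simultaneously that $F$ is finite and that it meets every set in $\mx$, which is precisely what is needed to apply Lemma~\ref{lem:G_N-minker}\ref{item:G_N-minker-1}. Once this is in hand the norm estimate is a direct splitting argument on the 1-unconditional basis.
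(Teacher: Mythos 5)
Your proof is correct, and it follows the same overall strategy as the paper (find a finite set meeting every $A\in\mx$, invoke Lemma~\ref{lem:G_N-minker} to get a norm drop $\gamma>0$, then control the coordinates on that set with a weak neighborhood and conclude by the triangle inequality), but the execution differs in two genuine ways. First, the paper takes the smaller set $E=\bigcup_{A\in\mx}A(1)$ and the neighborhood $\{y: |e_i^*(x-y)|<\min_{k\in E}x_k/2\}$, which forces $y_i\ge x_i/2>0$ on $E$; it then uses the decomposition $\norm{x-y}\le\norm{\frac x2}+\norm{\frac x2-P_E\frac x2}+\norm{P_E\frac x2-y}$ and bounds the last term by $\norm{y}\le 1$ via Fact~\ref{fact:1-unc}, which requires the standing reduction to $x\in K_X$. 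Your split $\norm{x-y}\le\norm{P_F(x-y)}+\norm{(I-P_F)x}+\norm{(I-P_F)y}$ with $\norm{P_F(x-y)}\le\eps|F|$ bypasses Fact~\ref{fact:1-unc} and the sign condition entirely, so no reduction to the positive cone is needed; the price is that you must first know $F$ is finite, which you correctly extract from Lemmas~\ref{lem:i-frste-c-infty} and~\ref{lem:k_nendelig} (the paper's $E$ also needs Lemma~\ref{lem:i-frste-c-infty} for this). Second, you apply part \ref{item:G_N-minker-1} of Lemma~\ref{lem:G_N-minker} directly to your single set $F$ after checking by hand that it meets every $A\in\mx$ (your three-case check is complete and correct), whereas the paper routes through $\mathcal{E}_1$ and part \ref{item:G_N-minker-2}. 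Both arguments are sound; yours is marginally more self-contained at this point of the paper.
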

\begin{proof}
  Assume that $x\in S_X \cap K_X$.
  Let $E = \bigcup_{A \in \mx} A(1)$.
  By Lemma~\ref{lem:G_N-minker} there exists $\gamma_1 > 0$
  such that $\max_{F \in \mathcal{E}_1} \|x - P_Fx\| = 1 - \gamma_1$.
  Let $\delta = \gamma_1/2$.

  Let $W = \left\{y\in B_X : \left|e_i^*\left(x-y \right)
    \right|<\min_{k\in E}\frac{x_k}{2}, i \in E  \right\}$. Then
  $x\in W$, and if $y\in W$, then $y_i\geq \frac{x_i}{2}>0$ for
  all $i\in E$. Thus if $y \in W$ we have
  \begin{align*}
    \{i \in \N: y_i \ge \frac{x_i}{2}\} \cap E
    = E \in \mathcal{E}_1.
  \end{align*}
  For any $y \in W$, we get that
  \begin{align*}
    \norm{x-y}
    & \le \bigg\|\frac{x}{2}\bigg\|
    + \bigg\|\frac{x}{2} -P_E\frac{x}{2}\bigg\|
    + \bigg\| P_E\frac{x}{2} - y\bigg\|
    < 2 - \delta,
  \end{align*}
  and we are done.
\end{proof}

Let us remark a fun application of the above proposition.

\begin{rem}
  Let $K$ be an infinite compact Hausdorff space.
  Then $C(K)$ does not have a $1$-unconditional
  (or a $1$-suppression unconditional) basis.

  Let $f$ be a function which attains its norm on a limit point of $K$.
  Arguing similarly as in \cite[Theorem~3.4]{AHLP}
  we may find a sequence of norm one
  functions $g_k$ with distance as close to $2$ as we want from
  $f$ that converge pointwise, and thus weakly, to $f$.
  The conclusion follows from Proposition~\ref{prop:1-unc-no-b-points}.
\end{rem}

The next result is the key ingredient in our proof that there are no
delta-points in Banach spaces with subsymmetric bases. Its proof
draws heavily upon Lemma~\ref{lem:G_N-minker}.

\begin{lem}\label{lem:condition-on-C_infty}
       Let $X$ be a Banach space with $1$-unconditional
       basis $(e_i)_{i \in \N}$ and let $x\in S_X$.
       Assume that there exists a slice $S(x^*,\delta)$,
       an $n\in \N$ and some $\eta>0$ such that
       \begin{enumerate}
              \item\label{item:hl-1}
              $x\in S(x^*,\delta)$,
              \item\label{item:hl-2}
              $y\in S(x^*,\delta)$ implies
              that
              \begin{equation*}
              \left\{i:|y_i|>\eta |x_i|  , \ \sgn y_i =  \sgn x_i \right\}
              \cap D(n) \neq \emptyset
              \end{equation*}
              for all $D \in \minfx$.
       \end{enumerate}
       Then $x$ is not a delta-point.
\end{lem}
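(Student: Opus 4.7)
My plan is, via Proposition~\ref{prop:Delta_point_crit}, to produce a slice $S' \ni x$ and a constant $\eps_0 > 0$ with $\|x - y\| \le 2 - \eps_0$ for every $y \in S'$, following the recipe set out in the remarks after Fact~\ref{fact:1-unc}. By composing with the isometries $T_\theta$ (see the discussion preceding Proposition~\ref{prop:1-unc-mult-bounded-seq}) I may assume $x \in K_X$. Let $N := \bigcup_{A \in \mathcal{G}_n(x)} A$; this is finite, since $\bigcup_{D \in \minfx} D(n)$ is finite by Lemma~\ref{lem:i-frste-c-infty}\ref{item:lem-ci-1}, $\mathcal{F}_n(x)$ is finite by Lemma~\ref{lem:k_nendelig}\ref{item:k_n-1}, and each $A \in \mathcal{F}_n(x) \subseteq \mend(x)$ is finite. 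Let $\gamma_n > 0$ be the constant from Lemma~\ref{lem:G_N-minker}\ref{item:G_N-minker-2}.

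Suppose for a moment that we have a slice $S' \ni x$, contained in $S(x^*, \delta)$, and a constant $\eta' \in (0, \eta]$ with the property that for every $y \in S'$ the set $E(y) := \{i \in N : y_i > \eta' x_i\}$ belongs to $\mathcal{E}_n(x)$. Hypothesis~\ref{item:hl-2} already provides $E(y) \cap D(n) \neq \emptyset$ for all $D \in \minfx$; the additional content is $E(y) \cap A \neq \emptyset$ for every $A \in \mathcal{F}_n(x)$. Decomposing
\begin{equation*}
x - y = (x - \eta' P_{E(y)} x) + (\eta' P_{E(y)} x - y),
\end{equation*}
Fact~\ref{fact:1-unc} (with $\eta' x$ playing the role of $x$) gives $\|\eta' P_{E(y)} x - y\| \le \|y\| \le 1$, while
\begin{equation*}
x - \eta' P_{E(y)} x = (1-\eta')x + \eta'(x - P_{E(y)} x)
\end{equation*}
combined with Lemma~\ref{lem:G_N-minker}\ref{item:G_N-minker-2} bounds $\|x - \eta' P_{E(y)} x\| \le (1-\eta') + \eta'(1 - \gamma_n) = 1 - \eta' \gamma_n$. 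Hence $\|x - y\| \le 2 - \eta' \gamma_n$ uniformly over $S'$, and $x$ is not a delta-point.

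The work, then, is the construction of such a subslice $S' = S(z^*, \delta')$. My proposed $z^*$ is of the form $x^* + \mu u^*$, where $\mu > 0$ is small and $u^* \in X^*$ is a convex combination of norming functionals for the finite-dimensional vectors $P_A x$, $A \in \mathcal{F}_n(x)$, chosen with non-negative coordinates supported in $M := \bigcup_{A \in \mathcal{F}_n(x)} A$. Such a $u^*$ is available: $\mathcal{F}_n(x)$ is finite, and each $P_A x$ may be normed on $\spann\{e_i : i \in A\}$, extending by zero and preserving the norm by $1$-unconditionality. The hardest part will be to tune $\mu$, $\delta'$, and $\eta'$ so that \emph{simultaneously} (a)~$x \in S(z^*, \delta')$, (b)~$S(z^*, \delta') \subseteq S(x^*, \delta)$ so that hypothesis~\ref{item:hl-2} persists, and (c)~the proximity of $u^*(y)$ to $u^*(x) = 1$ for $y \in S(z^*, \delta')$ forces $y_i > \eta' x_i$ on at least one coordinate of each $A \in \mathcal{F}_n(x)$. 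This last implication relies on $u^*$ being chosen so that, for every $A \in \mathcal{F}_n(x)$, the restriction $u^* \restriction (M \setminus A)$ has norm strictly smaller than $\|u^*\|$; achieving this by a careful averaging of the norming functionals is the genuine technical core, and is exactly what the weakly open argument in Proposition~\ref{prop:1-unc-no-b-points} bypasses.
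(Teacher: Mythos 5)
Your overall strategy is the paper's: reduce to $K_X$, and for $y$ in a suitable subslice show that $E(y)=\{i: y_i>\eta' x_i\}\cap\bigcup_{A\in\mathcal{G}_n}A$ lies in $\mathcal{E}_n$, then split $x-y=(x-\eta'P_{E(y)}x)+(\eta'P_{E(y)}x-y)$ and use Fact~\ref{fact:1-unc} together with Lemma~\ref{lem:G_N-minker} to get the uniform bound $2-\eta'\gamma_n$. That part of your write-up is correct and is exactly the paper's estimate. But the step you defer as ``the genuine technical core'' --- constructing the subslice $S'$ on which $E(y)$ meets every $A\in\mathcal{F}_n$ --- is the actual content of the proof, and it is missing. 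Worse, the sufficient condition you propose to aim for (that the restriction of $u^*$ to $M\setminus A$ have norm strictly smaller than $\|u^*\|$ for every $A$) is neither shown to be attainable nor clearly adequate: the sets $A\in\mathcal{F}_n$ may overlap, $y$ ranges over the whole ball and may have negative coordinates on $M$, and a small value of $u^*$ on $P_{M\setminus A}y$ does not by itself force $y_i>\eta'x_i$ for some $i\in A$ rather than merely $y_i$ not too negative. As written, the proposal does not establish the lemma.

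The paper closes this gap with a simpler and more standard device, which you should adopt. For each $A\in\mathcal{F}_n$ pick $x_A^*\in S_{X^*}$ with $x_A^*(P_Ax)=1$, $x_A^*(e_i)>0$ for $i\in A$ and $x_A^*(e_i)=0$ for $i\notin A$, and set $z^*=\frac{1}{|\mathcal{F}_n|+1}\bigl(\sum_{A\in\mathcal{F}_n}x_A^*+x^*\bigr)$ with slice width $\|z^*\|-1+\delta/(|\mathcal{F}_n|+1)$. Since each of the $|\mathcal{F}_n|+1$ terms is at most $1$, any $y$ in this slice satisfies $x^*(y)>1-\delta$ (so hypothesis~\ref{item:hl-2} applies and $E(y)$ meets every $D(n)$) and $x_A^*(y)>1-\delta$ for every $A$. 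Now the positivity and support of $x_A^*$ do the work: if $y_i<\eta x_i$ for all $i\in A$, then $x_A^*(y)=\sum_{i\in A}x_A^*(e_i)y_i<\eta\sum_{i\in A}x_A^*(e_i)x_i=\eta$, so taking $\eta<1-\delta$ (harmless, since hypothesis~\ref{item:hl-2} only improves when $\eta$ decreases) forces some $i\in A$ with $y_i\ge\eta x_i$. This is the averaging-of-functionals argument your plan gestures at but does not carry out, and without it the proof is incomplete.
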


\begin{proof}
       Assume that $x \in S_X \cap K_X$.
       Now for each $A\in \mathcal{F}_n$ find
       $x_A^* \in S_{X^*}$ such that $x_A^*(P_A x) = 1$
       with $x_A^*(e_i)=0$ for all $i\notin A$,
       and $x_A^*(e_i)>0$ for all $i\in A$.
       Let $z^* = \frac{1}{|\mathcal{F}_n|+1}\left(\sum_{A\in \mathcal{F}_n}
       x_A^* + x^*\right)$. Then $z^*\in B_{X^*}$ and
       \[
       \left\|z^*\right\|\geq z^*(x)> \frac{|\mathcal{F}_n|+1-\delta}{|\mathcal{F}_n|+1} =
       1-\frac{\delta}{|\mathcal{F}_n|+1}.
       \]
       For any $y\in
       S(z^*,\left\|z^*\right\|-1+\frac{\delta}{|\mathcal{F}_n|+1})$, we
       get that
       \begin{align*}
       1-\frac{\delta}{|\mathcal{F}_n|+1}
       & < \frac{1}{|\mathcal{F}_n|+1}\left(\sum_{A\in \mathcal{F}_n} x_A^*(y) +
       x^*(y)\right) \leq  \frac{|\mathcal{F}_n|+x^*(y)}{|\mathcal{F}_n|+1}.
       \end{align*}
       Solving for $x^*(y)$ we get that
       \[
       1-\delta<x^*(y),
       \]
       and similarly $1-\delta< x_A^*(y)$. Thus, if $0 < \eta<1-\delta$,
       \[
       \displaystyle F := \left\{ i : y_i \geq
       \eta x_i\right\} \bigcap \left(\bigcup_{E\in \mathcal{G}_n}E
       \right) \in \mathcal{E}_n.
       \]
       For any $y \in
       S(z^*,\|z^*\|-1+\frac{\delta}{|\mathcal{F}_n|+1})$
       we now get from Lemma~\ref{lem:G_N-minker} that
       \begin{align*}
       \left\|x-y\right\|
       & \leq \left\|x-\eta P_F x\right\|+\left\|\eta P_Fx - y\right\| \\
       & \leq \eta\left\|x-P_F x\right\|+\left(1-\eta\right)\left\|x\right\|+1 \\
       & \leq \eta\max_{E\in \mathcal{E}_n} \left\|x-P_E x\right\| + 2-\eta \\
       & \leq 2-\eta\gamma_n<2.
       \end{align*}
\end{proof}

If $x \in S_X$ with $\minfx = \emptyset$
in the above lemma, then any slice $S(x^*,\delta)$
containing $x$ trivially satisfies
Lemma~\ref{lem:condition-on-C_infty}~\ref{item:hl-2}.
We record this in the following proposition.

\begin{prop}\label{prop:minfx_empty_no_delta}
  Let $X$ be a Banach space with $1$-unconditional basis
  and let $x \in S_X$.
  If $\minfx = \emptyset$, then $x$ is not a delta-point.
\end{prop}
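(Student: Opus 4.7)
The plan is to reduce directly to Lemma~\ref{lem:condition-on-C_infty}. As observed in the paragraph immediately preceding the proposition, when $\minfx = \emptyset$ the second hypothesis of that lemma becomes vacuous: there are simply no sets $D \in \minfx$ whose initial segments $D(n)$ must be hit. So the only thing I need to do is exhibit \emph{some} slice containing $x$ together with a choice of $n$ and $\eta$, and then invoke the lemma.

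As usual, by passing to $|x|$ via the isometry $T_\theta$ I may assume $x \in S_X \cap K_X$. Now pick a norming functional $x^* \in S_{X^*}$ with $x^*(x) = 1$; then for any $\delta > 0$ we have $x \in S(x^*,\delta)$, which verifies condition \ref{item:hl-1} of Lemma~\ref{lem:condition-on-C_infty}. Fix any $n \in \N$ and any $\eta \in (0, 1-\delta)$. Since $\minfx = \emptyset$, condition \ref{item:hl-2} holds trivially (an implication with empty range of quantification). Applying Lemma~\ref{lem:condition-on-C_infty} then yields that $x$ is not a delta-point.

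There is essentially no obstacle here; the entire content has been packed into Lemma~\ref{lem:condition-on-C_infty}, and this proposition is just the record of its easiest special case. The only thing to be careful about is the reduction to $K_X$, which was already noted in the paragraph after Proposition~\ref{prop:1-unc-mult-bounded-seq}, and the trivial observation that the universal quantifier in \ref{item:hl-2} over the empty family $\minfx$ is vacuously satisfied, so the finite family $\mathcal{F}_n = \mx$ (cf.\ Lemma~\ref{lem:k_nendelig}) is all that actually gets used inside the proof of the lemma.
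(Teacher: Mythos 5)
Your proof is correct and takes essentially the same approach as the paper: the paper, in the paragraph immediately preceding the proposition, records it precisely as the vacuous special case of Lemma~\ref{lem:condition-on-C_infty}, observing that when $\minfx = \emptyset$ any slice containing $x$ trivially satisfies condition~\ref{item:hl-2}. Your added care in choosing $\delta < 1$ and $\eta \in (0,1-\delta)$ (and the reduction to $K_X$) is sound but inessential, since the hypothesis of the lemma only gets easier to satisfy as $\eta$ decreases.
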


By definition of the sets $\mx$ and a convexity argument, the next
result should be clear.

\begin{lem}
       \label{lem:mx-add-norm-increase}
       Let $X$ be a Banach space with $1$-unconditional basis
       $(e_i)_{i \in \N}$. If $x \in K_X$,
       then for every $A \in \mx$
       and every $t > 0$ we have
       $\norm{P_Ax + te_i} > \norm{x}$ for all $i \in A$.
\end{lem}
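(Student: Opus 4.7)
The plan is to realise $P_A x$ as a proper convex combination of the two vectors $P_A x + t e_i$ and $P_A x - x_i e_i$, and then play the norm equality $\|P_A x\|=\|x\|$ off against the strict inequality $\|P_A x - x_i e_i\|<\|x\|$ coming from the definition of $\mx$. In effect, the $1$-unconditional assumption is used only implicitly, through the well-definedness of the family $\mx$; the rest is pure convexity.

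First I would observe that for every $i \in A$ one automatically has $x_i > 0$. Indeed, if $x_i = 0$ then $P_A x - x_i e_i = P_A x$, and the strict inequality $\|P_A x - x_i e_i\|<\|P_A x\|$ from the definition of $\mx$ becomes impossible. Fix now $i \in A$ and $t > 0$, and set $\alpha = x_i/(x_i+t) \in (0,1)$. A one-line check, comparing coefficients at index $i$, at indices $j \in A\setminus\{i\}$, and outside $A$, shows that
\begin{equation*}
\alpha(P_A x + t e_i) + (1-\alpha)(P_A x - x_i e_i) = P_A x.
\end{equation*}

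Applying the triangle inequality together with $A \in \mx$, which gives $\|P_A x - x_i e_i\| < \|x\|$, I would then get
\begin{equation*}
\|x\| = \|P_A x\| \leq \alpha \|P_A x + t e_i\| + (1-\alpha)\|P_A x - x_i e_i\| < \alpha \|P_A x + t e_i\| + (1-\alpha)\|x\|.
\end{equation*}
Rearranging yields $\alpha \|x\| < \alpha \|P_A x + t e_i\|$, and since $\alpha > 0$ this is exactly $\|P_A x + t e_i\| > \|x\|$, as required. There is no real obstacle here: the only subtlety is recognising that strictness of $\|P_A x - x_i e_i\| < \|x\|$, combined with $\alpha \in (0,1)$, propagates through the convex combination to give the desired strict inequality.
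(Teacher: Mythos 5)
Your proof is correct, and it is precisely the ``convexity argument'' the paper alludes to (the paper omits the proof, remarking only that it follows by definition of $\mx$ and convexity). Writing $P_A x$ as the proper convex combination $\alpha(P_A x + te_i) + (1-\alpha)(P_A x - x_i e_i)$ with $\alpha = x_i/(x_i+t)$, and then using $\|P_Ax\|=\|x\|$ together with the strict inequality $\|P_A x - x_i e_i\|<\|x\|$ from the definition of $\mx$, is exactly the intended argument.
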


Finally it is time to cash in some dividends and prove the main result of
this section.

\begin{thm}
       \label{thm:subsym-no-delta}
       If $X$ has subsymmetric basis $(e_i)_{i \in \N}$,
       then $X$ has no delta-points.
\end{thm}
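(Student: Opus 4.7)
The plan is to verify the hypotheses of Lemma~\ref{lem:condition-on-C_infty}. By $1$-unconditionality we may take $x\in S_X\cap K_X$. If $\minfx=\emptyset$, Proposition~\ref{prop:minfx_empty_no_delta} finishes the proof, so we assume $\minfx\neq\emptyset$ and aim to produce a slice $S(x^*,\delta)$ with $x\in S(x^*,\delta)$ satisfying condition~(ii) of the lemma for some $n\in\N$ and $\eta>0$.

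The starting point is a uniform estimate $\|P_{D(n)}x\|\ge 1-\|R_n x\|$ valid for every $D\in\minfx$, where $R_n:=I-P_{\{1,\ldots,n\}}$; this follows from $1$-unconditionality together with the fact that any $D=\{d_1<d_2<\cdots\}\in\minfx$ satisfies $d_n\ge n$, so $D\setminus D(n)\subseteq\{n+1,n+2,\ldots\}$. By Lemma~\ref{lem:i-frste-c-infty}\ref{item:lem-ci-1}, the family $\mathcal{D}_n:=\{D(n):D\in\minfx\}=\{F_1,\ldots,F_{k_n}\}$ is finite.

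For each $F_j$, using $1$-unconditionality, we pick a nonnegative norming functional $f_j^*\in B_{X^*}$ for $P_{F_j}x$, supported on $F_j$, and set $x^*=\tfrac{1}{k_n}\sum_{j=1}^{k_n} f_j^*\in B_{X^*}$. Then $x^*(x)\ge 1-\|R_n x\|$, so $x\in S(x^*,\delta)$ whenever $\delta>\|R_n x\|$. A routine averaging argument (using $f_j^*(y)\le 1$ for every $j$) shows that any $y\in S(x^*,\delta)$ satisfies $f_{j_0}^*(y)>1-k_n(\|R_n x\|+\delta)$ for each $j_0$. If for some $D\in\minfx$ we had $y_i\le\eta x_i$ for all $i\in F_{j_0}=D(n)$, nonnegativity of $f_{j_0}^*$ would force $f_{j_0}^*(y)\le\eta\|P_{F_{j_0}}x\|\le\eta$; this contradicts the previous bound provided $\eta+k_n(\|R_n x\|+\delta)<1$, thereby verifying condition~(ii).

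The hard part, and the decisive use of subsymmetry, is arranging that $\eta+k_n(\|R_n x\|+\delta)<1$ for some admissible choice of $n,\delta,\eta$. Since $\|R_n x\|\to 0$ already by convergence of the basis expansion, what we need is control of $k_n$. Subsymmetry rigidifies $\minfx$: if two distinct $D_1,D_2\in\minfx$ had different prefixes, then for $j\in D_1\setminus D_2$ one has $\|P_{D_1\cup D_2}x-x_j e_j\|\ge\|P_{D_2}x\|=1$, and the subsymmetric identity $\|\sum_i x_i e_{k_i}\|=\|\sum_i x_i e_i\|$ applied to the sequence of values along $D_1\cup D_2$ turns this into a $c_0$-like plateau in the norm that the minimality conditions defining $\mx$ forbid. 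Consequently $k_n$ remains bounded (indeed one can push this reasoning to $k_n\equiv 1$), so $k_n\|R_n x\|\to 0$ and the required inequality is easily arranged. Lemma~\ref{lem:condition-on-C_infty} then yields that $x$ is not a delta-point, completing the proof.
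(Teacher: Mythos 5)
Your overall plan — verify the hypotheses of Lemma~\ref{lem:condition-on-C_infty} after disposing of the $\minfx=\emptyset$ case via Proposition~\ref{prop:minfx_empty_no_delta} — is the right strategy, and the averaging estimate $f_{j_0}^*(y)>1-k_n(\|R_n x\|+\delta)$ is computed correctly. However, the argument hinges on a bound on $k_n=|\{D(n):D\in\minfx\}|$ that you never actually establish. The assertion that ``subsymmetry rigidifies $\minfx$'' and that $k_n\equiv 1$ rests on the observation that $\|P_{D_1\cup D_2}x-x_je_j\|\ge\|P_{D_2}x\|=1$ for $j\in D_1\setminus D_2$, but this is vacuous: the minimality conditions in the definition of $\mx$ constrain $D_1$ and $D_2$ individually, not $D_1\cup D_2$, which need not belong to $\mx$ at all. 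Having $\|P_{(D_1\cup D_2)\setminus\{j\}}x\|=1$ contradicts nothing, and the appeal to a ``$c_0$-like plateau'' is not a proof. Note also that the paper's Proposition~\ref{prop:1-sym-mx} establishes $|\minfx|\le 1$ only for $1$-\emph{symmetric} bases, and the argument there uses genuine permutations, not the order-preserving injections that subsymmetry affords; you would need a separate argument for the subsymmetric case, and it is not clear such an argument exists. So the proposal has a genuine gap precisely at the one place where subsymmetry has to do real work.

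The paper's route avoids controlling $k_n$ entirely. Setting $s:=\max\{n: x_n=\max_i x_i\}$, it uses subsymmetry together with Lemma~\ref{lem:mx-add-norm-increase} to show $s\in A$ for \emph{every} $A\in\minfx$: if $s\notin A=\{a_1<a_2<\cdots\}$ and $a_{j-1}<s<a_j$, then replacing $a_j$ by $s$ and shifting along the increasing enumeration gives, via the subsymmetric norm identity, $\|P_{A_s}x\|=\|P_Ax+te_{a_j}\|>1$ where $t=x_s-x_{a_j}>0$, a contradiction. With $n=s$, every $D(n)$ contains $s$, so the single-coordinate slice $S(e_s^*,1-x_s/2)$ with $\eta=1/2$ already satisfies condition~\ref{item:hl-2} of Lemma~\ref{lem:condition-on-C_infty}. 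This is both more elementary than your averaged functional and, crucially, does not require any cardinality bound on the set of prefixes; that is the step your argument is missing.
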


\begin{proof}
  Assume $x \in S_X \cap K_X$.
  By Proposition~\ref{prop:minfx_empty_no_delta} we may
  assume that $\minfx \neq \emptyset$.
  Let $s:=\max\{ n : x_n = \max_i x_i\}$.
  We first show that $s \in A$ for all $A \in \minfx$.

  For contradiction assume that there exists
  $A = \{a_1, a_2, \ldots\} \in \minfx$
  with $s \not\in A$. Let $a_0 = 0$ and $j \in \N$ be such
  that $a_{j - 1} < s < a_j$. Let $t > 0$ such that $x_s = x_{a_j}+t$
  and let $A_s$ be $A$ with $a_j$ replaced by $s$.
  Using that $(e_i)_{i \in \N}$ is subsymmetric and
  Lemma~\ref{lem:mx-add-norm-increase} we get
  \begin{align*}
    1 \ge \norm{P_{A_s}x}
    &=
    \bigg\|
    \sum_{i \not = j} x_{a_i}e_{i} + (x_{a_j} + t) e_j
    \bigg\|\\
    &=
    \bigg\|\sum_{i \in \N} x_{a_i}e_{a_i} + te_{a_j} \bigg\|
    = \|P_Ax + t e_{a_j} \| > 1
  \end{align*}
  a contradiction.

  If we let $n = s$, then $s \in D(n)$ for all $D \in \minfx$,
  and the slice $S(e^*_s, 1 - \frac{x_s}{2})$ and $\eta = \frac{1}{2}$
  satisfies the criteria in
  Lemma~\ref{lem:condition-on-C_infty} and we are done.
\end{proof}

In the proof above we saw that if $X$ has a subsymmetric basis,
then for any $x \in S_X$ either $\minfx = \emptyset$ or
all $A \in \minfx$ has a common element.
In the case $X$ has a $1$-symmetric basis we can say a lot about the
sets $\mx$ for any given $x \in S_X$.

Recall that a Schauder basis $(e_i)_{i \in \N}$
is called \emph{$1$-symmetric} if it is
unconditional and $\|\sum_{i \in \N} \theta_ix_ie_{\pi(i)}\|
= \|\sum_{i \in \N} x_i e_i\|$ for any
$x = \sum_{i \in \N}  x_i e_i \in X,$ any sequence of signs
$(\theta_i)_{i \in \N}$, and any permutation $\pi$ of $\N$. A
$1$-symmetric basis is subsymmetric \cite[Proposition~3.a.3]{LiTz1}.

\begin{prop}
       \label{prop:1-sym-mx}
       Let $X$ be a Banach space with $1$-symmetric basis
       $(e_i)_{i \in \N}$ and let $x \in S_X$.
       \begin{enumerate}
       \item\label{item:1-sym-mx_1}
         If $\minfx \not=\emptyset,$ then $\mx = \{\supp(x)\};$
       \item\label{item:1-sym-mx_2}
         If $\minfx = \emptyset$ and $A, B \in \mx,$ then $|A| =
         |B|$ and $x$ is constant on $A \triangle B$.
       \end{enumerate}
\end{prop}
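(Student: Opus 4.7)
My plan for part (i) is to reduce it to showing $A = \supp(x)$ for the given $A \in \minfx$, from which $\mx = \{\supp(x)\}$ follows: any other $D \in \mx$ with $j \in \supp(x) \setminus D$ would satisfy $\|x - x_je_j\| \ge \|P_D(x - x_je_j)\| = \|P_Dx\| = 1$, contradicting the strict inequality $\|x - x_je_j\| < 1$ implied by $\supp(x) \in \mx$. Suppose for contradiction that $j \in \supp(x) \setminus A$. Since $A$ is infinite and $P_Ax$ converges, $x_{a_k} \to 0$, so one may choose $a_k \in A$ with $x_{a_k} < x_j$. First, $1$-symmetry (relocating the coefficient $x_{a_k}$ from position $a_k$ to $j$) gives $\|P_{A \setminus \{a_k\}}x + x_{a_k}e_j\| = \|P_Ax\| = 1$; then $1$-unconditionality (upgrading the coefficient from $x_{a_k}$ to $x_j$), combined with $|P_{A'}x| \le |x|$ coordinatewise, yields $\|P_{A'}x\| = 1$ for $A' := (A \setminus \{a_k\}) \cup \{j\}$. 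Applying Lemma~\ref{lem:overkill} to $P_{A'}x$ produces $C \in \mx$ with $C \subseteq A'$, and since $A$ has no proper norming subset (by its minimality), we must have $j \in C$. Matching $C$ back into $A$ via a Hall-type injection $\tau\colon C \to A$ with $x_c \le x_{\tau(c)}$ (analogous to the argument in part (ii) below) yields $\|P_{\tau(C)}x\| \ge \|P_Cx\| = 1$, hence equality; since $j \in C$ but $j \notin A$ one arranges $\tau(C) \subsetneq A$, contradicting the minimality of $A$.

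For part (ii), every element of $\mx$ is finite since $\minfx = \emptyset$. To prove $|A| = |B|$, assume without loss of generality $|A| \le |B|$ and look for an injection $\sigma\colon A \to B$ with $x_a \le x_{\sigma(a)}$ for every $a \in A$. Hall's marriage theorem guarantees such $\sigma$ provided that, for every threshold $v > 0$, $|\{a \in A : x_a \ge v\}| \le |\{b \in B : x_b \ge v\}|$. Granted Hall's condition, $1$-symmetry yields $\bigl\|\sum_{a \in A} x_a e_{\sigma(a)}\bigr\| = \|P_Ax\| = 1$, and $1$-unconditionality with $x_a \le x_{\sigma(a)}$ gives $\|P_{\sigma(A)}x\| \ge 1$, hence $\|P_{\sigma(A)}x\| = 1$; if $|A| < |B|$, then $\sigma(A) \subsetneq B$ is a proper norming subset, violating the minimality of $B$. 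If Hall's condition fails, some threshold $v$ witnesses an excess of large $A$-values over $B$-values, and the analogous matching with the roles of the top portions reversed violates the minimality of $A$ instead. Thus $|A| = |B| = n$.

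For the constancy of $x$ on $A \triangle B$, suppose some $a \in A \setminus B$ and $b \in B \setminus A$ satisfy $x_a < x_b$. The same swap produces $A'' := (A \setminus \{a\}) \cup \{b\}$ with $\|P_{A''}x\| = 1$, and Lemma~\ref{lem:overkill} combined with the just-proved size uniqueness ($|C| = n$ for every $C \in \mx$) forces $A'' \in \mx$. Since $|A'' \triangle B| < |A \triangle B|$, iterating the swap strictly reduces the symmetric difference while remaining inside $\mx$; termination is consistent only with $x$ being constant on $A \triangle B$. The main obstacles are handling the failure of Hall's condition cleanly in the size comparison (the reverse-matching argument) and, in part (i), controlling the injection $\tau$ so that $\tau(C) \subsetneq A$ holds even when $C$ is itself infinite—this requires leveraging the fact that $j \in C \setminus A$ so that a slot of $A$ can be omitted from the range of $\tau$.
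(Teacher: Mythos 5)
Your proposal heads in a genuinely different direction from the paper's proof, and it has real gaps that you yourself flag at the end. The paper's proof is much shorter and turns on Lemma~\ref{lem:mx-add-norm-increase}: for $A \in \mx$, $i\in A$ and $t>0$, one has $\norm{P_Ax + te_i} > \norm{x}$. In part~\ref{item:1-sym-mx_1}, with $j\in\supp(x)\setminus A$ and $a_k\in A$ chosen so that $x_{a_k} < x_j$, the clean move is to apply $1$-symmetry in the \emph{other} direction than you do: relocate the coefficient $x_j$ from slot $j$ to slot $a_k$, not $x_{a_k}$ from $a_k$ to $j$. This gives
\[
1 \ge \norm{P_{A'}x}
= \norm{P_{A\setminus\{a_k\}}x + x_je_j}
= \norm{P_{A\setminus\{a_k\}}x + x_je_{a_k}}
= \norm{P_Ax + (x_j - x_{a_k})e_{a_k}} > 1,
\]
an immediate contradiction. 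Your direction of the swap only yields $\norm{P_{A'}x}=1$, which by itself is not contradictory, and you then try to recover a contradiction through Lemma~\ref{lem:overkill} plus a Hall-type injection $\tau\colon C\to A$ with $x_c\le x_{\tau(c)}$ and $\tau(C)\subsetneq A$. That injection need not exist: if $C=A'$, then the only element of $A$ outside $C\setminus\{j\}$ is $a_k$ itself, and $x_{a_k} < x_j$ precludes $\tau(j)=a_k$; nothing forces some other $a\in A$ with $x_j\le x_a$ to be available. So the claimed contradiction to minimality of $A$ does not follow.

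For part~\ref{item:1-sym-mx_2}, the paper proves constancy on $A\triangle B$ \emph{first}, by exactly the same one-line symmetry/Lemma~\ref{lem:mx-add-norm-increase} argument applied to $k\in A\setminus B$, $l\in B\setminus A$ with $x_k<x_l$; once $x$ is constant on $A\triangle B$, $|A|<|B|$ would, again by $1$-symmetry, produce a proper norming subset of $B$, contradicting minimality. You reverse this order and try to prove $|A|=|B|$ first via Hall's marriage theorem. The branch of your argument where Hall's condition holds is fine, but the branch where it fails (``the analogous matching with the roles of the top portions reversed violates the minimality of $A$'') is not worked out and is not a symmetric swap of roles: failing Hall for $A\to B$ gives a threshold $v$ with $|A_v|>|B_v|$, but building a coefficient-increasing injection the other way that lands in a \emph{proper} subset of $A$ requires more care than is given. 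Your subsequent ``iterate the swap'' argument for constancy also concludes only that some descendant $A^{(k)}$ has $x$ constant on $A^{(k)}\triangle B$, which is not the same as constancy on $A\triangle B$; the paper instead gets a direct contradiction from a single non-constancy witness, so no iteration is needed. In short: your core swap idea is on the right track, but you should apply $1$-symmetry by shifting the larger coefficient onto the node inside $A$ and invoke Lemma~\ref{lem:mx-add-norm-increase} to finish immediately, rather than routing through Lemma~\ref{lem:overkill} and Hall's theorem.
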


\begin{proof}
       Assume that $x \in S_X \cap K_X$.

       \ref{item:1-sym-mx_1}.
       Let $A \in \minfx$ and $x_l \in \supp(x) \setminus A$.
       Since $|A| = \infty$, there exists $k \in A$ and $t > 0$ with $x_k +t = x_l$. Using
       that $(e_i)_{i \in \N}$ is $1$-symmetric and
       Lemma~\ref{lem:mx-add-norm-increase} we get
       \begin{align*}
       1 \geq \left\|P_{A\setminus\left\{k\right\}} x + x_l
         e_l\right\| = \left\|P_{A\setminus\left\{k \right\}}x + x_l
         e_k\right\| = \left\|P_A x + t e_k\right\| > 1,
       \end{align*}
       a contradiction.

       \ref{item:1-sym-mx_2}. Suppose that $x$
       is not constant on $A \triangle B$ and let $k, l \in A \triangle B$
       with $x_k \not= x_l,$ say $k \in A, l \in B,$ and $x_k <
       x_l$. Then argue as in \ref{item:1-sym-mx_1} to get a
       contradiction, so $x$ is
       constant on $A \triangle B$. As $x$ is constant on $A\triangle
       B$, we cannot have $\left|A \right|< \left|B \right|$ since
       then a subset of $B$ would be in $\mx$ contradicting the
       definition of $\mx$.
\end{proof}

\section{A space with $1$-unconditional basis and delta-points}
\label{sec:binarytree}

In this section we will prove the following theorem.

\begin{thm}\label{thm:bt-har-delta-ikke-daugavet}
  There exists a Banach space $X_{\bt}$
  with $1$-unconditional basis,
  such that
  \begin{enumerate}
  \item\label{thmbt:1}
    $X_{\bt}$ has a delta-point;
  \item\label{thmbt:2}
    $X_{\bt}$ does not have Daugavet-points.
  \end{enumerate}
\end{thm}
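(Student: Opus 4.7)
The plan is to construct an explicit Banach space $X_{\bt}$ of type $\ha$, where $\bt$ is the infinite binary tree $\{0,1\}^{<\N}$ and $\A$ is the adequate family of all (finite and infinite) chains in $\bt$. The resulting norm is $\|x\| = \sup_B \sum_{t \in B}|x_t|$ with $B$ running over branches (maximal chains), and the standard unit vectors $(e_t)_{t \in \bt}$ form a normalized $1$-unconditional basis. The space $X_{\bt}$ contains isometric copies of $\ell_1$ (on any single branch) and $c_0$ (on antichains with pairwise disjoint sub-branches), and both structures are exploited.

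For \ref{thmbt:2}, fix any $x \in S_{X_{\bt}} \cap K_{X_{\bt}}$. Choose a branch $B_0$ on which $\|x\|$ is nearly attained and a finite initial segment $E \subset B_0$ with $\sum_{t \in E} x_t > 1 - \eta$. The functional $x^* = \sum_{t \in E} e_t^*$ has dual norm $1$ (since $E$ is a chain), and $x^*(x) > 1 - \eta$ places $x \in S(x^*, \eta)$. Any $y$ in this slice satisfies $\sum_{t \in E} y_t > 1 - 2\eta$, which together with $\sum_{t \in E}|y_t| \leq 1$ forces positive correlation with $x$ on a large subset $F \subseteq E$ where $y_t \geq \lambda x_t$ for some $\lambda > 0$. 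By Fact~\ref{fact:1-unc},
\[
\|x - y\| \leq \|x - \lambda P_F x\| + \|y - \lambda P_F x\| \leq \|x - \lambda P_F x\| + 1 \leq 2 - \gamma
\]
for some $\gamma = \gamma(\eta, \lambda) > 0$, via a quantitative version of Lemma~\ref{lem:G_N-minker} applied to $\mathcal{E}_1$. Proposition~\ref{prop:Daugavet_point_crit} then rules out $x$ as a Daugavet-point.

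For \ref{thmbt:1}, the candidate delta-point is an $x \in S_{X_{\bt}} \cap K_{X_{\bt}}$ whose support spreads across multiple disjoint infinite chains so that $\minfx$ is rich and every single coordinate $x_t$ is small. Given any slice $S(x^*,\delta) \ni x$ and any $\eps > 0$, one uses the $c_0$-structure on antichains to locate a norm-giving chain $D \in \minfx$ along which $x^*$ has small weighted inner product against $x$ (a single functional in $X_{\bt}^*$ cannot simultaneously be norming for $x$ on many disjoint branches), and then sets $y = x - 2P_D x$ with a small correction to ensure both $\|y\| \le 1$ and $x^*(y) > \|x^*\| - \delta'$ for some $\delta'$ close to $\delta$. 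The reflection on $D$ yields $\|x - y\| \approx 2\|P_D x\| \approx 2$, verifying the delta-point criterion of Proposition~\ref{prop:Delta_point_crit} (equivalently, refuting the hypothesis of Lemma~\ref{lem:condition-on-C_infty} for every slice).

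The main obstacle is the delta-point construction itself: choosing $x$ so that $\minfx$ accommodates every slice functional, and adaptively selecting the branch $D$ (and the correction) for each $x^*$ so that the reflected $y$ stays in the slice. This requires a delicate balance between the $\ell_1$-structure on branches (which makes chain-slices restrictive) and the $c_0$-structure on antichains (which provides the freedom to reflect); getting the right candidate $x$ is the crucial technical step, and likely requires a support spread over a binary-branching pattern that interacts nontrivially with both structures.
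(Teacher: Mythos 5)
You have the right space: $X_{\bt} = \ha$ with $\A$ the chains of the binary tree, identical to the paper's choice. But both halves of the argument have gaps, and the second one is the more serious because you do not flag it.

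For part~\ref{thmbt:2}, the slice functional $x^* = \sum_{t \in E}e_t^*$ with $E$ an initial segment of a single branch $B_0$ does not do what you claim. The step that needs $\|x - \lambda P_F x\| < 1$ (the quantitative Lemma~\ref{lem:G_N-minker} argument) requires $F$ to meet every $A \in \mx$. But $F \subseteq E \subset B_0$ is confined to one branch, while $\mx$ can contain other branches $B_1$ that diverge from $B_0$ at a node where $x$ vanishes; then $P_{B_1}(x - \lambda P_F x) = P_{B_1}x$ has norm $1$, so $\|x - \lambda P_F x\| = 1$ and your estimate collapses to $\|x-y\| \le 2$. Concretely, if $x$ is supported on two branches diverging at the root, then $y := P_E x - P_{B_1\setminus E}x$ lies in your slice and has $\|x-y\| = 2$. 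The fix in the paper is to take $E = \bigcup_{A \in \mx}A(1)$ (the \emph{first} node of each norm-giving chain); for $X_\bt$ one verifies that this finite set is an antichain, hence lies in $E_{X_\bt}$, and then Lemma~\ref{lem:daug_antichain_E} together with Lemma~\ref{lem:G_N-minker} finishes the job. The antichain structure, not a single branch, is what makes the argument close.

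For part~\ref{thmbt:1}, the reflection $y = x - 2P_D x$ along a branch $D$ does give $\|y\|=1$ and $\|x-y\|=2$, but it need not stay in the slice, and the correction you allude to cannot in general save it. Take the paper's candidate $x = \sum_{|t|>0}2^{-|t|}e_t$ and $x^* = \frac{1}{2}(\sum_{t \in D_1}e_t^* + \sum_{t\in D_2}e_t^*)$ for two branches $D_1, D_2$ diverging at level one. Then $\|x^*\| = 1$, $x^*(x)=1$, but every branch $D$ shares its first node with $D_1$ or $D_2$, so $x^*(P_D x) \ge \tfrac14$ for all $D$, forcing $x^*(x - 2P_D x) \le \tfrac12$. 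Thus no branch-reflection lands in a narrow slice. You correctly identify that this is the main obstacle, but you do not resolve it. The paper circumvents branches entirely: the elements it produces are supported on antichains ($z_{t_0}$ with a basis vector removed), and the convex decomposition $x = 2^{-N}\sum_{|t|=N}z_t + (\text{small tail})$ forces some $z_{t_0}$ into the slice by an averaging argument; subtracting one $e_{t_n}$ from an antichain of successors then gives distance $2$ by summing along a branch through $t_n$. That construction is the genuine content of the theorem and is missing from your proposal.
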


Before giving a proof of the theorem
we will need some notation.
By definition, a \emph{tree} is a partially ordered set
$(\mathcal{T},\preceq)$ with the property that,
for every $t \in \mathcal{T}$, the set
$\{s \in \mathcal{T}: s \preceq t\}$
is well ordered by $\preceq$.
In any tree we use normal interval notation,
so that for instance a \emph{segment} is
$[s,t] = \{r \in \mathcal{T} : s \preceq r \preceq t\}$.
If a tree has only one minimal member, it is said to
be \emph{rooted} and the minimal member is called the
\emph{root} of the tree and is denoted $\emptyset$.
We have $\emptyset \preceq t$ for all $t \in \mathcal{T}$.
We say that $t$ is an \emph{immediate successor} of $s$
if $s \prec t$ and the set
$\{r \in \mathcal{T} : s \prec r \prec t\}$ is empty.
The set of immediate successors of $s$ we denote with $s^+$.
A sequence $B = \{t_n\}_{n=0}^\infty$ is a \emph{branch}
of $\mathcal{T}$ if $t_n \in \mathcal{T}$ for all $n$,
$t_0 = \emptyset$ and $t_{n+1} \in t_n^+$ for all $n \ge 0$.
If $s,t \in \bt$ are nodes such that neither
$s \preceq t$ nor $t \preceq s$, then
$s$ and $t$ are \emph{incomparable}.
An \emph{antichain} in a tree is a collection of elements
which are pairwise incomparable.

We consider the infinite binary tree,
$\bt = \bigcup_{n=0}^\infty \{0,1\}^n$,
that is, finite sequences of zeros and ones.
The order $\preceq$ on $\bt$ is defined
as follows:
If $s = \{s_1,s_2,\ldots,s_k\} \in \{0,1\}^k \subset \bt$ and
$t = \{t_1,t_2,\ldots,t_l\} \in \{0,1\}^l \subset \bt$, then
$s \preceq t$ if and only if $k \le l$
and $s_i = t_i$, $1 \le i \le k$.
As usual we denote with $|s|$ the cardinality of $s$,
i.e. $|s| = k$.
The \emph{concatenation} of $s$ and $t$ is
$s^\frown t = \{s_1,s_2,\ldots,s_k,t_1,t_2,\ldots,t_l\}
\in \{0,1\}^{k+l} \subset \bt$.
Clearly $s \preceq s^\frown t$ and
$s^+ = \{s^\frown 0, s^\frown 1\}$.
The infinite binary tree is rooted with $\emptyset = \{0,1\}^0$.

Following Talagrand \cite{MR554378,MR736065}
we say
that $\A \subseteq \mathcal{P}(\N)$
is an \emph{adequate} family if
\begin{itemize}
\item
  $\A$ contains the empty set and the singletons:
  $\{n\} \in \A$ for all $n \in \N$.
\item
  $\A$ is hereditary:
  If $A \in \A$ and
  $B \subseteq A$, then
  $B \in \A$.
\item
  $\A$ is compact with respect to the topology
  of pointwise convergence:
  Given $A \subset \N$,
  if every finite subset of $A$ is in $\A$,
  then $A \in \A$.
\end{itemize}
It is well-known that if we define a norm
on $c_{00}(\N)$ by
\begin{equation*}
  \|\sum_{i=1}^n a_i e_i\|
  = \sup_{A \in \A} |a_i|,
\end{equation*}
then the completion
$\ha = \overline{(c_{00}(\N),\|\cdot\|)}$
is a Banach space and $(e_i)_{i \in \N}$ is a $1$-unconditional
basis for $\ha$.
(See e.g. \cite[p.~410]{MR1226181}).
Since $\A$ is compact we get that for every
$x \in \ha$ there exists $A \in \A$ such
that $\|P_A x\| = \|x\|$.

There is a bijection between $\bt$ and $\N$
where the natural order on $\N$ corresponds to
the lexicographical order on $\bt$
(see \cite[p.~69]{MR2053392}).
The family $\A$ of all subsets of $\N$
corresponding to the branches of $\bt$ and their
subsets is an adequate family.
We get that $X_\bt := \ha$ is a Banach space
with $1$-unconditional basis $(e_t)_{t \in \bt}$.
Note that the span of the basis vectors corresponding
to any infinite antichain in $X_\bt$ is isometric to $c_0$,
and that the span of the basis vectors corresponding
to any branch in $X_\bt$ is isometric to $\ell_1$.

\begin{proof}[Proof of Theorem~\ref{thm:bt-har-delta-ikke-daugavet}~\ref{thmbt:1}]
  Consider
  \begin{equation*}
    x = \sum_{|t| > 0} 2^{-|t|} e_{t}.
  \end{equation*}
  Summing over branches we find that $\|x\| = 1$.
  We will show that $x$ is a delta-point.
  Define $z_{\emptyset} = 0$ and then for $t_0 \in \bt$
  \begin{equation*}
    z_{t_0^\frown 0} = z_{t_0} + e_{t_0^\frown 1}
    \quad\mbox{and}\quad
    z_{t_0^\frown 1} = z_{t_0} + e_{t_0^\frown 0}.
  \end{equation*}
  Here is a picture of $z_{(0,0)}$ and $z_{(0,1)}$:
  \begin{center}
    \begin{tikzpicture}
      \begin{scope}[shift={(-3,0)}]
        \draw (0,0.5) node {$z_{(0,0)}$};
        \draw (0,0) node (A) {${\color{red}0}$};
        \draw (-1.5,-1) node (B) {${\color{red}0}$};
        \draw (1.5,-1) node (C) {$1$};
        \draw (-2.25,-2) node (D) {${\color{red}0}$};
        \draw (-0.75,-2) node (E) {$1$};
        \draw (0.75,-2) node (F) {$0$};
        \draw (2.25,-2) node (G) {$0$};
        \draw[thick] (A)--(B);
        \draw[thick] (A)--(C);
        \draw[thick] (B)--(D);
        \draw[thick] (B)--(E);
        \draw[thick] (C)--(F);
        \draw[thick] (C)--(G);
      \end{scope}
      \begin{scope}[shift={(3,0)}]
        \draw (0,0.5) node {$z_{(0,1)}$};
        \draw (0,0) node (A) {${\color{red}0}$};
        \draw (-1.5,-1) node (B) {${\color{red}0}$};
        \draw (1.5,-1) node (C) {$1$};
        \draw (-2.25,-2) node (D) {$1$};
        \draw (-0.75,-2) node (E) {${\color{red}0}$};
        \draw (0.75,-2) node (F) {$0$};
        \draw (2.25,-2) node (G) {$0$};
        \draw[thick] (A)--(B);
        \draw[thick] (A)--(C);
        \draw[thick] (B)--(D);
        \draw[thick] (B)--(E);
        \draw[thick] (C)--(F);
        \draw[thick] (C)--(G);
      \end{scope}
    \end{tikzpicture}
  \end{center}
  From the definition it is clear that
  \begin{equation*}
    \frac{1}{2}\left(z_{t_0^\frown 0} + z_{t_0^\frown 1}\right)
    =
    z_{t_0}
    +
    \frac{1}{2}\left( e_{t_0^\frown 0} + e_{t_0^\frown 1} \right)
  \end{equation*}
  so by induction
  \begin{equation*}
    y_N := \frac{1}{2^N}\sum_{|t|=N} z_t
    = x - \sum_{|t|>N} 2^{-|t|}e_t.
  \end{equation*}
  Let $x^* \in S_{X_\bt^*}$ and $\delta > 0$ such
  that $x \in S(x^*,\delta)$. Find $N$ such that
  $x^*(y_N) > 1 - \delta$ which is possible
  since $\|\sum_{|t| > N} 2^{-|t|}e_t\| \to 0$
  as $N \to \infty$.
  But $x^*(y_N) > 1 - \delta$ means that
  there exists $t_0$ with $|t_0| = N$ such that
  $x^*(z_{t_0}) > 1 - \delta$.
  Let $E = (t_{i})_{i=1}^\infty$ be an infinite antichain
  of successors of $t_0$. Then $x^*(e_{t_i}) \to 0$
  as $i \to \infty$.
  Find $t_n$ such that
  \begin{equation*}
    x^*(z_{t_0} - e_{t_n}) > 1 - \delta.
  \end{equation*}
  By definition of $z_{t_0}$ we have
  $\{u \in \bt: u \preceq t_0\} \cap \supp(z_{t_0}) = \emptyset$
  hence $z_{t_0} - e_{t_n} \in S(x^*,\delta)$.
  Summing over a branch containing $t_n$ we get
  \begin{equation*}
    \|x - (z_{t_0} - e_{t_n})\|
    \ge
    \sum_{h=1,h \neq |t_n|}^\infty 2^{-h}
    + 2^{-|t_n|} + 1
    =
    2
  \end{equation*}
  as desired.
\end{proof}

Next is the proof that $X_\bt$
does not have Daugavet-points.
We first need a general lemma about
Daugavet-points.

Let $(e_i)_{i \in \N}$ be a 1-unconditional basis
in a Banach spaces $X$. Define
\begin{equation*}
  E_X = \{E \subset \N : \sum_{i \in E} e_i \in S_X\}.
\end{equation*}

\begin{lem}\label{lem:daug_antichain_E}
  Let $X$ be a Banach space with $1$-unconditional
  basis $(e_i)_{i \in \N}$.
  If $x \in S_X$ is a Daugavet-point,
  then $\|x - P_E x\| = 1$ for all $E \in E_X$.
\end{lem}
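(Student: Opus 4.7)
The plan is to prove the contrapositive: if some $E \in E_X$ satisfies $\|x - P_E x\| < 1$, then $x$ fails the criterion of Proposition~\ref{prop:Daugavet_point_crit}, because one can exhibit a single slice $S \subseteq B_X$ with $\sup_{y \in S}\|x - y\| < 2$. After reducing to $x \in K_X$ and setting $\eta := 1 - \|x - P_E x\| > 0$, my first observation is that the defining equality $\|\sum_{i \in E} e_i\| = 1$ combined with Proposition~\ref{prop:1-unc-mult-bounded-seq} forces $\|\sum_{i \in E'} a_i e_i\| = \max_{i \in E'}|a_i|$ for every finitely supported family $(a_i)$ and every $E' \subseteq E$; in particular $\|P_{E'}x\| = \max_{i \in E'}x_i$ for all $E' \subseteq E$.

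Armed with this $c_0$-structure on $E$, I would choose $F := \{i \in E : x_i \ge \eta/2\}$, which is finite by unconditional convergence of $\sum x_i e_i$ and nonempty (otherwise $\|P_E x\| < \eta/2$ and $\|x\| \le \|P_E x\| + \|x - P_E x\| < 1$ would contradict $\|x\| = 1$). A triangle estimate in $P_{\N \setminus F}x = P_{\N \setminus E}x + P_{E \setminus F}x$, combined with the $c_0$-bound $\|P_{E \setminus F}x\| < \eta/2$, yields $\|x - P_F x\| \le 1 - \eta/2$. The candidate slice is $S := S(x^*, \delta)$ with $x^* := \sum_{i \in F} e_i^*$ and $\delta \in (0, \eta/4)$; using once more that $F \subseteq E \in E_X$, one checks $\|x^*\| = |F|$ with the supremum attained at $\sum_{i \in F} e_i$. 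For any $y \in S$, pigeonholing $\sum_{i \in F} y_i > |F| - \delta$ against the coordinate bounds $|y_i| \le 1$ forces $y_j > 1 - \delta > 1 - \eta/4$ for every $j \in F$.

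The main obstacle is that Fact~\ref{fact:1-unc} applied directly to $F$ would require $y_j \ge x_j$ on all of $F$, which can fail when some $x_j$ is close to (or equal to) $1$. I would circumvent this by truncation: split $F = F^{\ell} \cup F^{h}$ with $F^{\ell} := \{j \in F : x_j \le 1 - \eta/4\}$, and define
\[
  w := P_{F^{\ell}}x + \left(1 - \tfrac{\eta}{4}\right)\sum_{j \in F^{h}} e_j.
\]
Then $0 \le w_j \le y_j$ with agreeing signs for every $j \in F$, so Fact~\ref{fact:1-unc} applied with $w$ in place of $x$ gives $\|y - w\| \le \|y\| \le 1$. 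The difference $x - w = P_{\N \setminus F}x + \sum_{j \in F^{h}}(x_j - (1 - \eta/4))e_j$ splits as a sum of two disjointly supported vectors of norms at most $1 - \eta/2$ and $\eta/4$, respectively (the latter via the $c_0$-structure on $F^{h} \subseteq E$ and $x_j \le 1$), so $\|x - w\| \le 1 - \eta/4$. Combining, $\|x - y\| \le \|x - w\| + \|w - y\| \le 2 - \eta/4$ uniformly on $S$, which contradicts $x$ being a Daugavet-point.
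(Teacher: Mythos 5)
Your proof is correct and uses the same core strategy as the paper: exhibit a slice on which every $y$ has coordinates, indexed by a relevant finite subset of $E$, uniformly close to $1$, then use Fact~\ref{fact:1-unc} with an intermediate ``comparison vector'' to bound $\norm{x-y}$ strictly below $2$ on that slice, contradicting Proposition~\ref{prop:Daugavet_point_crit}. The difference lies in the comparison vector. The paper compares both $x$ and $y$ to $P_E\tfrac{x}{2}$: halving $x$'s coordinates makes $\tfrac{x_i}{2} \le \tfrac12 < 1-\gamma < y_i$ automatic, so Fact~\ref{fact:1-unc} applies immediately, and the bound $\norm{x - P_E\tfrac{x}{2}} < 1-\tfrac{\eta}{2}$ is a one-line convexity estimate from $x - P_E\tfrac{x}{2} = \tfrac12(x-P_Ex)+\tfrac12 x$. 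You instead cap the large coordinates at $1-\tfrac{\eta}{4}$, which is what forces the split $F = F^\ell \cup F^h$, the explicit $c_0$-structure observation on $E$, and the disjoint-support decomposition of $x-w$; it works, but it re-derives by hand what the halving trick gets for free. One further remark: your passage from $E$ to the finite subset $F$ is a safe move but is not actually needed. Since $\norm{e_i}=1$ for every $i$, a series $\sum_{i\in E}e_i$ can only converge when $E$ is finite, so every member of $E_X$ is finite to begin with (the paper's proof uses this implicitly by writing $\tfrac{1}{|E|}$); making this explicit would let you take $F=E$ and shorten the argument.
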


\begin{proof}
  Assume $x \in S_X \cap K_X$ and
  that there exists $\eta > 0$
  and $E \in E_X$ such that
  $\left\|x - P_E x\right\| < 1 - \eta$.

  Define $x^* = \frac{1}{|E|} \sum_{i\in E} e_i^* \in S_{X^*}$.
  Choose $\gamma > 0$ such that
  $\max_{i\in E} \frac{e_i^*(x)}{2} < 1 - \gamma$.
  If $y \in S(x^*, \frac{\gamma}{|E|})$,
  then it follows that
  $1 - \gamma < e^*_i(y)$ for all $i\in E$ and
  \begin{align*}
    \left\| x - y \right\|
    \leq
    \left\| x - P_E\frac{x}{2} \right\|
    +
    \left\| y - P_E\frac{x}{2} \right\|
    < 2 - \frac{\eta}{2},
  \end{align*}
  so $x$ is not a Daugavet-point.
\end{proof}

\begin{proof}[Proof of Theorem~\ref{thm:bt-har-delta-ikke-daugavet}~\ref{thmbt:2}]
  Assume $x \in S_{X_{\bt}} \cap K_{X_{\bt}}$.
  Let $E = \bigcup_{A \in \mx} A(1)$.
  From Lemma~\ref{lem:i-frste-c-infty}
  we see that $|E|$ is finite.
  Note that $E$ is an antichain.
  Indeed,
  assume $t_0, t_1 \in E$ with
  $t_0 \preceq t_1$ where $A(1) = \{t_1\}$ for some $A \in \mx$.
  Then since $x \in K_{X_{\bt}}$ and
  \begin{equation*}
    1 \ge \norm{P_{A \cup \{t_0\}}x}
    \ge
    \sum_{t \in A \cup \{t_0\}} e^*_t(x)
    =
    \sum_{t \in A \setminus \{t_0\}} e^*_t(x) + e^*_{t_0}(x)
    \ge \|P_A x\| = 1
  \end{equation*}
  we must have $t_0 = t_1$.

  We have $\|x - P_E x\| < 1$ by
  Lemma~\ref{lem:G_N-minker}~\ref{item:G_N-minker-1}.
  From Lemma~\ref{lem:daug_antichain_E} we get that
  $x$ is not a Daugavet-point since $E \in E_{X_\bt}$.
\end{proof}

Let us end this section with a remark about the
proof of Theorem~\ref{thm:bt-har-delta-ikke-daugavet}~\ref{thmbt:1}.
In order to prove that $X_\bt$ has a delta-point
we could have used dyadic trees.
Recall that a \emph{dyadic tree}
in a Banach space is a sequence
$(x_t)_{t \in \bt}$,
such that
$x_{t} = \frac{1}{2}(x_{t^{\frown} 0} + x_{t^{\frown} 1})$.

In fact, $x = \sum_{|t| > 0} 2^{-|t|} e_{t}$ is the root
of a dyadic tree. In order to show this one uses the
same $z_t$'s as in the above proof, but attach a copy
of $x$ to the node $t$.
Finally, we have the following result about dyadic
trees and delta-points.

\begin{prop}\label{prop:dyadic-tree-delta}
  If a Banach space $X$ contains a dyadic tree
  $(x_t)_{t \in \bt} \subset B_X$
  such that
  \begin{equation*}
    \limsup_{n \to \infty}
    (\min_{|t|=n} \{\|x_\emptyset - x_t\|\}) = 2,
  \end{equation*}
  then $x_{\emptyset}$ is a delta-point.
\end{prop}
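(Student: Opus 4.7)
The plan is to apply the slice characterisation of delta-points given by Proposition~\ref{prop:Delta_point_crit}. First I would dispatch the membership $x_{\emptyset} \in S_X$: the hypothesis gives, for each $\eps>0$, some $t \in \bt$ with $\|x_\emptyset - x_t\| > 2-\eps$, and since $\|x_t\| \le 1$ the triangle inequality forces $\|x_\emptyset\| \ge 1-\eps$; together with $x_\emptyset \in B_X$ this yields $\|x_\emptyset\|=1$.

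Next, fix a slice $S(x^*,\delta)$ of $B_X$ containing $x_\emptyset$; rescaling $x^*$ we may assume $\|x^*\|=1$, so $x^*(x_\emptyset) > 1-\delta$. The dyadic-tree relation iterates to
\begin{equation*}
  x_\emptyset \;=\; \frac{1}{2^n}\sum_{|t|=n} x_t \qquad \text{for every } n \ge 0,
\end{equation*}
and applying $x^*$ gives
\begin{equation*}
  \frac{1}{2^n}\sum_{|t|=n} x^*(x_t) \;=\; x^*(x_\emptyset) \;>\; 1-\delta.
\end{equation*}
Since each term satisfies $x^*(x_t)\le \|x_t\|\le 1$, an averaging argument (if every term were $\le 1-\delta$ the average would be too) produces, at every level $n$, at least one $t$ with $|t|=n$ and $x^*(x_t) > 1-\delta$, i.e.\ $x_t \in S(x^*,\delta)$.

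Now fix $\eps > 0$ and invoke the limsup hypothesis to pick a level $n$ with $\min_{|t|=n}\|x_\emptyset - x_t\| > 2-\eps$. For this $n$ the previous step supplies a $t$ with $x_t \in S(x^*,\delta) \cap B_X$; since the minimum over the whole level already exceeds $2-\eps$, this very $x_t$ also satisfies $\|x_\emptyset - x_t\| > 2-\eps$. Taking $y:=x_t$ verifies the criterion of Proposition~\ref{prop:Delta_point_crit}, so $x_\emptyset$ is a delta-point.

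The only real content is the observation that the dyadic tree identity telescopes to $x_\emptyset = 2^{-n}\sum_{|t|=n} x_t$, so that a functional close to its supremum on $x_\emptyset$ is automatically close to its supremum on some level-$n$ node. Once this is noticed there is no further obstacle: the limsup hypothesis supplies a level where \emph{every} node is essentially antipodal to $x_\emptyset$, so any level-$n$ node selected by the averaging step does the job simultaneously.
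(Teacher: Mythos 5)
Your proof is correct. It shares the paper's key observation --- the dyadic relation telescopes to $x_\emptyset = 2^{-n}\sum_{|t|=n}x_t$, and the limsup hypothesis provides levels $n$ at which \emph{every} node satisfies $\|x_\emptyset - x_t\| > 2-\varepsilon$ --- but you then route the conclusion through the slice criterion of Proposition~\ref{prop:Delta_point_crit}, using an averaging argument to locate a level-$n$ node inside a given slice. The paper's proof skips the slices entirely: at such a level every $x_t$ lies in $\Delta_\varepsilon(x_\emptyset)$, so the identity $x_\emptyset = 2^{-n}\sum_{|t|=n}x_t$ already exhibits $x_\emptyset$ as an element of $\conv\Delta_\varepsilon(x_\emptyset)$, which is the definition of a delta-point. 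Your detour is harmless (and your preliminary check that $x_\emptyset\in S_X$ is a reasonable extra bit of care), but the pigeonhole step buys nothing here, precisely because the hypothesis makes \emph{all} nodes at the chosen level work, not just the one your averaging argument selects; working with the convex-hull definition directly is both shorter and slightly stronger in spirit.
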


\begin{proof}
  Let $\varepsilon > 0$ and find
  $n$ with $\|x_{\emptyset} - x_{t}\| \ge 2 - \varepsilon$
  for all $t$ with $|t|=n$.
  This means that $x_t \in \Delta_\varepsilon(x_{\emptyset})$.
  By definition of a dyadic tree
  \begin{equation*}
    x_{\emptyset} = \frac{1}{2^{n}} \sum_{|t| = n} x_{t},
  \end{equation*}
  so we have $x_{\emptyset} \in \conv \Delta_\varepsilon(x_{\emptyset})$.
\end{proof}

\section{A space with $1$-unconditional basis and daugavet-points}
\label{sec:modbinarytree}

In this section we will cut of the root
of the binary tree and modify the norm from
the example in the previous section to allow
the space to have Daugavet-points.

Let $\mbt = \bigcup_{n=1}^\infty \{0,1\}^n$
be the binary tree with the root removed.
Note that a branch $B = \{t_n\}_{n=1}^\infty$
in $\mbt$ corresponds to the branch
$\{t_n\}_{n=0}^\infty$ in $\bt$
where $t_0 = \emptyset$.

A \emph{$\lambda$-segment} in $\mbt$
is a set $S \subset \mbt$ of the form
$S = [s,t] \cup t^+$, where $[s,t]$ is a (possibly empty)
segment of $\mbt$. If $[s,t] = \emptyset$, then
$S = \{(0),(1)\}$.

Using the lexicographical order $\le$ on $\mbt$
we have a bijective correspondence to $\N$
with the natural order.
Let $\A$ be the adequate family
of subsets of $\N$
corresponding to subsets of branches and
subsets of $\lambda$-segments.
Using this adequate family we get a Banach
space $X_{\mbt} := \ha$ with $1$-unconditional basis
$(e_t)_{t \in \mbt}$.
We call $X_{\mbt}$ the modified binary tree space.
Note that $X_\mbt$ contains isometric copies
of $c_0$ and $\ell_1$ just like $X_\bt$.

As we saw in the proof of
Theorem~\ref{thm:bt-har-delta-ikke-daugavet}~\ref{thmbt:2}
the antichains in the tree play an important role
for the existence of Daugavet-points.

Define
\begin{equation*}
       \acF
       :=
       \{0\} \cup
       \{
       z \in S_{X_\mbt} : z(\mbt) \subset \{0,\pm 1\}
       \}.
\end{equation*}
The set $E_{X_{\mbt}}$ from Section~\ref{sec:binarytree}
can be described as the set of all
non-void finite antichains $E$ of $\mbt$
such that $|A \cap E| \le 1$ for all $A \in \A$.
Clearly $\supp(z) \in E_{X_{\mbt}}$ for every
$z \in \acF \setminus \{0\}$ and every $z$ with
$\supp(z) \in E_{X_{\mbt}}$ and $z(\mbt) \subset \{0,\pm 1\}$
belongs to $\acF$.
It is also clear that for every $E \in E_{X_{\mbt}}$
there exists a branch $B$ such that $B \cap E = \emptyset$.
We will see in Lemma \ref{Lem:konvekskombinasjonavF} and Theorem
\ref{thm:mbt_kar_daugpkt}  that the sets $E_{X_{\mbt}}$ and
$\mathfrak{F}$ will play an essential role in characterizing the
Daugavet-points of $X_\mbt$.

If $M$ is a finite subset of $\mbt$,
then we will use the notation
$K_M = \{\sum_{t \in M} a_t e_t : a_t \ge 0\}$ and
$\acF_M = \{z \in \acF : \supp(z) \subset M\}$.

First we prove a lemma which says that convex combinations
of elements in $\acF$ are dense in the unit ball
of $X_{\mbt}$.

\begin{lem}\label{Lem:konvekskombinasjonavF}
  Let $M$ be a finite subset of $\mbt$.
  Then
  \begin{equation*}
    \spann\left\{e_t : t\in M \right\} \cap B_{X_\mbt} = \conv \left(\acF_M\right)
  \end{equation*}
  that is, for every $x \in \spann\left\{e_t : t\in M \right\}  \cap B_{X_\mbt}$ we have
  \begin{equation}
    \label{eq:3}
    x = \sum_{k=1}^N \lambda_k z_k
  \end{equation}
  where $z_k \in \acF_M$, $\lambda_k > 0$,
  $\sum_{k=1}^N \lambda_k = 1$.
  In particular,
  $\ext(K_M \cap B_{X_\mbt}) = K_M \cap\acF_M$.
\end{lem}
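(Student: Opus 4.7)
The forward inclusion $\conv(\acF_M) \subseteq \spann\{e_t : t \in M\} \cap B_{X_\mbt}$ is immediate, since each $z \in \acF_M$ has $\|z\|\le 1$ and $\supp(z)\subseteq M$, and the right-hand side is convex. For the reverse inclusion, I would first reduce to the positive cone by 1-unconditionality: the sign-flip isometry $T_\theta$ with $\theta_t = \sgn(x_t)$ sends $\acF_M$ into $\acF_M$ and $|x|$ to $x$, so it suffices to show that every $x \in K_M \cap B_{X_\mbt}$ is a convex combination of elements of $K_M \cap \acF_M$ whose supports are contained in $\supp(x)$.

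I would prove this by induction on $|\supp(x)|$. The base case $\supp(x) = \emptyset$ is immediate since $0 \in \acF_M$. For the inductive step, if $\|x\|<1$ write $x = \|x\|(x/\|x\|) + (1-\|x\|)\cdot 0$ so we may assume $\|x\|=1$. The heart of the argument is to peel off a single term from $K_M \cap \acF_M$: choose $E \subseteq \supp(x)$ with $E \in E_{X_\mbt}$ such that $E \cap A \neq \emptyset$ for every tight set $A \in \A$ (meaning $\sum_{t \in A} x(t) = 1$). Setting $z := \sum_{t \in E}e_t \in K_M \cap \acF_M$ and $\lambda := \min_{t \in E}x(t) > 0$, the pointwise inequality $x \ge \lambda z$ gives
\[
\|x - \lambda z\| = \sup_{A \in \A}\Bigl(\sum_{t \in A} x(t) - \lambda |A \cap E|\Bigr) \le 1 - \lambda,
\]
since tight $A$'s lose at least $\lambda$ (as they meet $E$) while non-tight $A$'s start below $1$. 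Thus $y := (x-\lambda z)/(1-\lambda) \in K_M \cap B_{X_\mbt}$ and $|\supp(y)| < |\supp(x)|$ (any minimiser in $E$ drops out), so the induction hypothesis yields the decomposition of $y$ and hence of $x$.

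The main obstacle is the existence of the covering antichain $E$. This rests on the combinatorial fact that two distinct elements of $\mbt$ share a common $A \in \A$ precisely when they are comparable or are siblings, so $E_{X_\mbt}$-sets are antichains with no sibling pairs, and every tight $A$ is either a subset of a branch (a chain in $\mbt$) or a subset of a $\lambda$-segment (a chain with two sibling children attached). Since $M$ is finite there are only finitely many tight sets, and a greedy construction that processes them while picking representatives compatible with those already chosen should produce $E$; if direct peeling fails because $\lambda$ must be taken strictly smaller than $\min_{t \in E}x(t)$, one iterates with this smaller $\lambda$, making additional non-tight constraints tight until a strict reduction in support becomes available.

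Finally, the extreme-point identity $\ext(K_M \cap B_{X_\mbt}) = K_M \cap \acF_M$ follows from the convex-hull statement. The inclusion $K_M \cap \acF_M \subseteq \ext(K_M \cap B_{X_\mbt})$ is a direct check: if $z = \sum_{t \in E}e_t$ and $z = \tfrac{1}{2}(y_1+y_2)$ with $y_i \in K_M \cap B_{X_\mbt}$, then $0 \le y_i(t) \le 1$ together with $y_1(t)+y_2(t) = 2$ force $y_i(t) = 1$ on $E$, and $y_i(t) \ge 0$ together with $y_1(t)+y_2(t)=0$ forces $y_i(t) = 0$ off $E$; hence $y_1=y_2=z$. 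The reverse inclusion is immediate from the convex-hull identity by Milman's theorem.
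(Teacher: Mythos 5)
Your overall strategy --- peeling off extreme points by subtracting a multiple of the indicator of a ``covering'' antichain --- is a legitimate alternative to the paper's argument, but as written it has two genuine gaps. The first is the displayed norm estimate. With $\lambda=\min_{t\in E}x(t)$ it is \emph{not} true that $\norm{x-\lambda z}\le 1-\lambda$: a set $A\in\A$ disjoint from $E$ loses nothing, so you would need $\sum_{t\in A}x(t)\le 1-\lambda$ for every such $A$, and ``starts below $1$'' does not give this. Concretely, take $x=\tfrac12 e_{(0)}+\tfrac12 e_{(1)}+\tfrac25 e_{(0,0)}$; the only tight set is $\{(0),(1)\}$, so $E=\{(1)\}$ is a legitimate covering antichain with $\lambda=\tfrac12$, yet $\norm{x-\tfrac12 e_{(1)}}=\tfrac9{10}>\tfrac12$, so $y=(x-\lambda z)/(1-\lambda)$ leaves the ball. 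Your parenthetical fix --- shrink $\lambda$ until a new constraint becomes tight --- is the right instinct, but it destroys the induction on $|\supp(x)|$ (the support need not shrink at such a step), so you would need a different termination argument, e.g.\ a lexicographic induction on the pair $(|\supp(x)|,\,\#\{\text{non-tight }A\subseteq\supp(x)\})$, together with a fresh covering antichain at each stage. None of this is in the write-up.

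The second, more serious gap is the existence of the covering antichain $E$, which you correctly flag as ``the main obstacle'' but then dispose of with ``a greedy construction \dots should produce $E$.'' This is the combinatorial heart of your approach: you are asking for an independent transversal (independent in the graph where $s\sim t$ iff $s,t$ lie in a common $A\in\A$) of the family of tight sets, each of which is a clique in that graph, and pairwise compatibility of cliques does not in general yield a global independent transversal. The statement does appear to be provable here --- for instance, two disjoint tight sets always contain an incomparable, non-sibling pair, since otherwise their union would sit inside a single branch or $\lambda$-segment and carry mass $2$ --- but turning that into a full transversal requires an actual argument, and a greedy choice can paint itself into a corner. By contrast, the paper sidesteps both issues: it proves the decomposition by induction on the nodes of $\mbt$ in level order, at each step adjoining the sibling pair $t^\frown 0,\,t^\frown 1$ and splitting each extreme point $z_k$ whose support misses the segment above $t$ into $z_k+e_{t^\frown 0}$, $z_k+e_{t^\frown 1}$ and $z_k$ with redistributed weights. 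Your forward inclusion, the reduction to the positive cone, and the extreme-point identification at the end are all fine; it is the peeling step that needs the two missing lemmas before the proof is complete.
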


\begin{proof}
  With $M_n$ denote the subset of $\mbt$ which
  corresponds to $\{1,\ldots,n\} \subset \N$.
  We will show, by induction, that for every $x\in K_{M_{2n}}\cap B_{X_\mbt}$ we have
  \[
  x= \sum_{k=1}^{N} \lambda_k z_k,
  \]
  where $z_k\in K_{\supp(x)}\cap \acF$, $\lambda_k>0$ and $\sum_{k=1}^{N} \lambda_k = 1$. As $K_M \subseteq K_{M_{2n}}$ for some $n\in\N$ and $z_k\in K_{\supp(x)}\cap \mathfrak{F}$, the result will follow.

  The base step is $x \in K_{M_2} \cap B_{X_{\mbt}}$
  with $e_t^*(x) \geq 0$ for $t \in M_2 = \{(0),(1)\}$.
  Write $e^*_{(0)}(x) = a_0$ and
  $e^*_{(1)}(x) = a_1$.
  Define $c = 1 - a_0 - a_1$,
  $z_{0} = e_{(0)}$, and
  $z_{1} = e_{(1)}$.
  Then
  \begin{equation*}
    x =
    \left(
      c \cdot 0 + a_0 z_0 + a_1 z_1
    \right)
  \end{equation*}
  is a convex combination of elements in $K_{\supp(x)}\cap\acF$.

  Assume the induction hypothesis holds for $n\in \N$.  Let $x \in K_{M_{2(n+1)}} \cap B_{X_\mbt}$. Let $t \in \mbt$ be the node such that $t^\frown 0$ corresponds to $2n+1$ and $t^\frown 1$
  to $2n+2$.
  Define
  \begin{equation*}
    x'
    =
    x
    - e^*_{t^\frown 0}(x) e_{t^\frown 0}
    - e^*_{t^\frown 1}(x) e_{t^\frown 1}.
  \end{equation*}
  By assumption we have $x' = \sum_{k=1}^{N} \lambda_k z_k$ with
  $\lambda_k>0$, $\sum_{k=1}^{N} \lambda_k = 1$ and
  $z_k\in K_{\supp(x')}\cap \acF$.

  Define the segment $A = \{ s \in \mbt : s \preceq t\}$ and the sets
  \begin{equation*}
    I = \left\{ k \in \left\{1,\ldots,N \right\} :
      P_A z_k = 0 \right\}
    \quad \mbox{and} \quad
    J = \{1,\ldots,N\} \setminus I.
  \end{equation*}
   For $k \in I$ we let
  \begin{equation*}
         z_{k,0} := z_k + e_{t^\frown 0}
         \quad \mbox{and} \quad
         z_{k,1} := z_k + e_{t^\frown 1}.
  \end{equation*}
  Since $z_k \in K_{\supp(x')} \cap \acF$
  we get $z_{k,0}$, $z_{k,1} \in K_{\supp(x)} \cap \acF$ and
  \begin{equation*}
    \sum_{s \in A} e^*_{s}(x') = \sum_{s \in A} e^*_{s}(x)
    = \sum_{k \in J} \lambda_k.
  \end{equation*}
  Thus, by definition of the norm we have,
  \begin{equation*}
       0 \leq e^*_{t^\frown 0}(x) + e^*_{t^\frown 1}(x)
       \leq 1 - \sum_{s \in A}  e^*_{s}(x) = \sum_{k\in I}\lambda_k.
  \end{equation*}
  Write
  $e^*_{t^\frown 0}(x) = a_0$ and
  $e^*_{t^\frown 1}(x) = a_1$.
  Define
  $c = \sum_{k \in I} \lambda_k - a_0 - a_1$.
  Let $m = \frac{1}{\sum_{k\in I} \lambda_k  } $.
  It follows that
  \begin{align*}
    x
    &= x' + a_0e_{t^\frown 0} +a_1 e_{t^\frown 1}\\
    &=
    \sum_{k \in J} \lambda_k z_k
    +
    \sum_{k \in I} \lambda_k z_k
    +
    \sum_{k \in I} \lambda_k
    \left(\frac{a_0}{m} e_{t^\frown 0}
    + \frac{a_1}{m} e_{t^\frown 1}\right)\\
    &=\sum_{k \in J} \lambda_k z_k
    +
    \sum_{k \in I} \lambda_k \frac{(a_0+a_1+c)}{m} z_k
    +
    \sum_{k \in I} \lambda_k
    \left(\frac{a_0}{m} e_{t^\frown 0}
    + \frac{a_1}{m} e_{t^\frown 1}\right)\\
    &=
    \sum_{k \in J} \lambda_k z_k
    +
    \sum_{k \in I} \lambda_k
    \left(\frac{a_0}{m} z_{k,0}
    + \frac{a_1}{m} z_{k,1}
    + \frac{c}{m} z_k
    \right)
  \end{align*}
  which is a convex combination of elements in
  $K_{\supp(x)} \cap \acF$.
\end{proof}

With the above lemma in hand we are able to
characterize Daugavet-points in $X_{\mbt}$
in terms of $E_{X_{\mbt}}$.
This will give us an easy way to identify
and give examples of Daugavet-points.

\begin{thm}\label{thm:mbt_kar_daugpkt}
  Let $x \in S_{X_{\mbt}}$, then the following are equivalent
  \begin{enumerate}
  \item\label{item:thm-mbt-1}
    $x$ is a Daugavet-point;
  \item\label{item:thm-mbt-2}
    $\left\| x - P_E x \right\| = 1$,
    for all $E\in E_{X_{\mbt}}$;
  \item\label{item:thm-mbt-3}
    for any $z\in \acF$, either
    $\left\| x - z \right\| = 2$
    or
    for all $\varepsilon > 0$ there exists $s \in \mbt$
    such that $z \pm e_s \in \acF$
    and $\left\| x - z \pm e_s \right\| > 2 - \varepsilon$.
  \end{enumerate}
\end{thm}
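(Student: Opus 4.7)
The plan is to prove the cyclic chain (1)$\Rightarrow$(2)$\Rightarrow$(3)$\Rightarrow$(1), reducing to $x \in K_{X_\mbt}$ via $1$-unconditionality throughout. The first implication (1)$\Rightarrow$(2) will be immediate from Lemma~\ref{lem:daug_antichain_E}: the characterization of $E_{X_\mbt}$ recorded just before the theorem identifies it precisely with the set $E_X$ of that lemma, namely the finite antichains $E \subset \mbt$ with $\sum_{t \in E} e_t \in S_{X_\mbt}$.

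For (2)$\Rightarrow$(3) I fix $z \in \acF$, set $E = \supp(z)$, and use (2) together with compactness of $\A$ to produce $A^* \in \A$ with $A^* \cap E = \emptyset$ and $\sum_{t \in A^*} x_t = 1$ (the case $E = \emptyset$ being trivial). The decisive step is a direct computation: for any $s \in A^*$ with $E \cup \{s\} \in E_{X_\mbt}$, evaluating the norm via the adequate set $A^*$ yields
\begin{equation*}
\|x - (z - e_s)\| \ge \sum_{t \in A^*} |(x - z + e_s)_t| = (1 + x_s) + (1 - x_s) = 2,
\end{equation*}
so in fact the equality holds and $z - e_s \in \acF$ is the required witness for (3). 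The auxiliary task is to ensure that such an $s$ exists in $A^*$: using the explicit structure of $\mbt$, each $t \in E$ can share an adequate set only with its ancestors and its unique sibling, so the "bad" $s \in A^*$ form a finite set. In the degenerate case where $A^*$ is so small that every element is bad, one argues separately that $\|x - z\| = 2$ already, by exhibiting an adequate set that combines $A^*$ with an element of $E$.

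For (3)$\Rightarrow$(1), given a slice $S(x^*, \delta)$ and $\varepsilon > 0$, I pick $y \in S(x^*, \delta/2)$, truncate it to $\spann\{e_t : t \in M\} \cap B_{X_\mbt}$ for a large enough finite $M$ (retaining the slice condition by continuity of $x^*$), and decompose the truncation as $\sum_k \lambda_k z_k$ with $z_k \in \acF_M$ via Lemma~\ref{Lem:konvekskombinasjonavF}. Convexity of $x^*$ forces some $z := z_{k_0} \in \acF$ to remain in $S(x^*, \delta/2)$. Applying (3) to this $z$: if $\|x - z\| = 2$ we take $y := z$; otherwise, for small $\eta$ we obtain $s$ and $\sigma \in \{\pm 1\}$ with $z + \sigma e_s \in \acF$ and $\|x - (z + \sigma e_s)\| > 2 - \eta$. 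To certify $y := z + \sigma e_s \in S(x^*, \delta)$ requires $\sigma x^*(e_s) \ge -\delta/2$; I exploit that the norm bound obtained in the (2)$\Rightarrow$(3) computation is essentially indifferent to the choice of $\sigma$ when $x_s$ is small, so $\sigma$ may be aligned with $\sgn x^*(e_s)$. Combining with the Schauder-basis decay $x_s \to 0$ and with the fact (noted in Section~\ref{sec:modbinarytree}) that infinite antichains of $\mbt$ span isometric copies of $c_0$ — whence $x^*(e_s) \to 0$ along any such antichain — a suitable $s$ can be selected.

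The step I expect to be the main obstacle is this last refinement of (3)$\Rightarrow$(1): condition (3) as literally stated delivers only a single witness $s$ per tolerance, whereas the slice constraint demands an $s$ whose basis functional value $x^*(e_s)$ is small (or of the right sign). Resolving it requires unpacking the proof of (2)$\Rightarrow$(3), which in fact produces an essentially cofinite family of valid $s$ inside $A^*$, and then invoking the $c_0$-structure of antichains to pick one for which $|x^*(e_s)|$ is arbitrarily small.
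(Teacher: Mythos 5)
Your implication (1)$\Rightarrow$(2) matches the paper, but both remaining implications have genuine gaps. Take (3)$\Rightarrow$(1) first, since you flagged the obstacle yourself: condition \ref{item:thm-mbt-3} is stated with \emph{both} signs precisely so that no slice functional ever needs to be controlled. The paper writes an arbitrary finitely supported $y=\sum_k\lambda_k z_k$ with $z_k\in\acF$ (Lemma~\ref{Lem:konvekskombinasjonavF}) and then uses
\begin{equation*}
  y=\sum_{k\in D_1}\lambda_k z_k+\sum_{k\in D_2}\frac{\lambda_k}{2}\left(z_k+e_{s_k}\right)+\sum_{k\in D_2}\frac{\lambda_k}{2}\left(z_k-e_{s_k}\right),
\end{equation*}
so the perturbations cancel and $y$ itself is exhibited as a convex combination of elements of $\Delta_\eps(x)$; this gives $B_{X_\mbt}=\clonv\Delta_\eps(x)$ directly, with no need for $x^*(e_{s_k})$ to be small or of the right sign. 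Your slice-based route needs strictly more than statement \ref{item:thm-mbt-3} provides, and your proposed repair --- ``unpacking the proof of (2)$\Rightarrow$(3)'' to obtain a cofinite family of witnesses --- invokes hypothesis \ref{item:thm-mbt-2}, not \ref{item:thm-mbt-3}. That would at best prove (2)$\Rightarrow$(1), leaving the implication (3)$\Rightarrow$(1), and hence the equivalence of \ref{item:thm-mbt-3} with the other two conditions, unestablished.

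The implication (2)$\Rightarrow$(3) is where the real work lies, and your sketch does not do it. First, your computation along $A^*$ controls only one sign: at $t=s$ you get $|x_s+1|$ for one choice but $|x_s-1|=1-x_s$ for the other, so $\|x-(z+e_s)\|$ is only bounded below by $2-2x_s$; to handle both signs you must choose $s$ with $x_s<\eps/2$, which is possible only when some norm-attaining set of $x-P_Ex$ is a subset of an infinite branch. Second, the claim that the ``bad'' $s\in A^*$ form a finite set is false as stated: every descendant of a node of $E$ is bad (it shares a branch with that node), and $A^*$ may consist entirely of such descendants --- this is the case the paper dispatches first by showing $\|x-z\|=2$ outright. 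Third, and most seriously, the remaining case --- every $A\in M(x-P_Ex)$ is a finite subset of a $\lambda$-segment, so no $s$ with small $x_s$ exists and no element of $E$ is comparable to the elements of $A$ --- is exactly the crux; your suggestion to ``exhibit an adequate set that combines $A^*$ with an element of $E$'' fails there because no branch or $\lambda$-segment contains both. The paper resolves this by an induction that replaces $E$ by sets $E_1,E_2,\ldots\in E_{X_\mbt}$ while strictly decreasing $|M(x-P_{E_i}x)|$, ending in a contradiction with Lemma~\ref{lem:overkill}; some substitute for that argument is required before your proof can be considered complete.
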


\begin{proof}
  As usual we will assume that $x \in K_{X_\mbt}$ throughout.

  \ref{item:thm-mbt-1} $\Rightarrow$ \ref{item:thm-mbt-2}
  is Lemma~\ref{lem:daug_antichain_E}.

  \ref{item:thm-mbt-2} $\Rightarrow$ \ref{item:thm-mbt-3}.
  Let $\varepsilon > 0$,
  $z\in \acF$ and $E = \supp(z)$.
  We have assumed that $\|x - P_E x\| = 1$.

  By definition of $M(x-P_E x)$ we have
  $A \cap E = \emptyset$ for every $A \in M(x-P_Ex)$.
  If there, for some $A \in M(x-P_Ex)$,
  exists $t \in E$ and $s_0 \in A$ such that $t \preceq s_0$,
  or $t \in E$ such that $s \preceq t$ for all $s \in A$,
  then we are done since $e^*_t(x)= 0$ and
  \begin{equation*}
    \|x - z\| \ge \sum_{s \in A} |e^*_s(x)| + |e_t^*(z)| = 2.
  \end{equation*}
  So from now on we assume that no such $A$ exists.

  Assume that there exists $A \in M(x-P_Ex)$
  that is a subset of a branch $B$.
  By definition of the norm, we have $e^*_t(x) = 0$
  for $t\in B\setminus A$, and by the assumption above,
  we also have $B\cap E =\emptyset$.
  Since $|e^*_t(x)| \to 0$ as $|t| \to \infty$ for $t \in B$
  we can find $s \in B$ with $|e^*_s(x)| < \varepsilon/2$
  and hence
  \begin{equation*}
    \|x - z \pm e_s\|
    \ge \sum_{t \in A, t \neq s} |e^*_t(x)| + |e^*_s(x) \pm 1|
    \ge 2 - \varepsilon.
  \end{equation*}
  This concludes the case where $A$ is a subset of a branch.

  Suppose for contradiction that
  no $A \in M(x - P_E x)$ is a subset of a branch,
  then every $B \in M(x - P_E x)$ is a subset of
  a $\lambda$-segment.
  By Lemma~\ref{lem:k_nendelig} we must have
  $|M( x - P_E x )| < \infty$.

  Choose any $B \in M(x - P_E x)$ and write
  \begin{equation*}
    B =
    \{b_1 \prec b_2 \prec \cdots \prec b_n\}
    \cup \{b^\frown 0, b^\frown 1\},
  \end{equation*}
  where $b_n \preceq b$.
  In particular $e^*_s(x) \neq 0$ for $s \in b^+$.

  Let
  $R =
  \left\{ t \in E :
    b^\frown 0 \prec t \right\}$
  and $E_1 =
  \left(E \cup \left\{b^\frown 0 \right\} \right)\setminus R$.
  From the assumptions above
  $E \cap \{t :  t \preceq b^\frown 0\} = \emptyset$,
  so $E_1 \in E_{X_{\mbt}}$.

  Let $C \in M(x - P_{E_1}x)$. Notice that $C\cap \left\{t : b^\frown 0 \preceq t \right\}= \emptyset$. Otherwise, by definition of the norm, we get the contradiction
  \begin{equation*}
    1 = \|P_{C\cap\left\{b^\frown 0 \right\}}x\|
    = \sum_{t \in C} |e^*_t(x)| + |e^*_{b^\frown 0}(x)|
    > \sum_{t \in C} |e^*_t(x)|
    = \|P_C x\| = 1.
  \end{equation*}
  Hence
  $P_C(x - P_{E_1}x) = P_C(x - P_E x)$
  and $C \in M(x - P_E x)$.

  We have $M(x - P_{E_1}x) \subseteq M(x - P_E x)$,
  but since $B \cap {E_1} \neq \emptyset$
  we have $B \notin M(x - P_{E_1}x)$
  so the inclusion is strict.

  We now have $|M(x - P_{E_1}x)| < |M(x - P_E x)|$
  and no $C \in M(x - P_{E_1}x)$ is a subset of
  a branch. We can use the argument above
  a finite number of times until we are
  left with $E_m \in E_{X_{\mbt}}$
  with $\|x - P_{E_m}x\| = 1$
  and $M(x - P_{E_m}x) = \emptyset$
  which contradicts Lemma~\ref{lem:overkill}.

  Finally, \ref{item:thm-mbt-3} $\Rightarrow$ \ref{item:thm-mbt-1}.
  Choose $\varepsilon > 0$.
  Let $y \in B_{X_\mbt}$ with finite support.
  Then by Lemma~\ref{Lem:konvekskombinasjonavF},
  we can write $y = \sum_{k=1}^{n} \lambda_k z_k$,
  with $z_k \in \acF$, $\lambda_k \ge 0$
  and $\sum_{k=1}^n \lambda_k = 1$.
  Let $D_1 = \left\{k \in \left \{1,\ldots, n \right\} :
    \left\| x - z_k \right\| = 2 \right\}$ and
  $D_2 = \left\{1,\ldots, n \right\} \setminus D_1$.
  We can, by assumption, for each $k \in D_2$
  find $s_k \in \mbt$ such that $z_k \pm e_{s_k} \in \acF$
  with $\left\| x - z_k \pm e_{s_k} \right\| > 2 - \varepsilon$.
  Then $y \in \conv \Delta_\varepsilon(x)$ since
  \begin{equation*}
    y
    =
    \sum_{i\in D_1} \lambda_k z_k
    + \sum_{k\in D_2}^{} \frac{\lambda_k}{2} (z_k + e_{s_k})
    + \sum_{k\in D_2}^{} \frac{\lambda_k}{2} (z_k - e_{s_k}).
  \end{equation*}
  The set of all such $y$ is dense in $B_{X_\mbt}$,
  hence $B_{X_\mbt} = \clonv \Delta_\varepsilon(x)$
  so $x$ is a Daugavet-point.
\end{proof}

\begin{cor}\label{cor:konkret-betingelse-for-daugavetpkt}
  If $x \in S_{X_{\mbt}}$ such that $\|P_A x\| = 1$
  for all branches $A$, then $x$ is a Daugavet-point.
\end{cor}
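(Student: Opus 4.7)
The plan is to verify condition~\ref{item:thm-mbt-2} of Theorem~\ref{thm:mbt_kar_daugpkt}, from which $x$ being a Daugavet-point is immediate. So fix $E \in E_{X_{\mbt}}$; the task is to show $\|x - P_E x\| = 1$.

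For the lower bound I will use the observation (recorded just before Lemma~\ref{Lem:konvekskombinasjonavF}) that every $E \in E_{X_{\mbt}}$ admits a branch $B$ of $\mbt$ with $B \cap E = \emptyset$. On such a $B$ the vector $P_E x$ vanishes, so $P_B(x - P_E x) = P_B x$, and Proposition~\ref{prop:1-unc-mult-bounded-seq} gives
\[
\|x - P_E x\| \ge \|P_B(x - P_E x)\| = \|P_B x\| = 1,
\]
where the last equality is the standing hypothesis $\|P_A x\|=1$ for every branch $A$. For the matching upper bound the same proposition yields $\|x - P_E x\| = \|P_{\N \setminus E} x\| \le \|x\| = 1$. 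Combining the two, $\|x - P_E x\| = 1$, as required.

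There is no serious obstacle here: the content of the corollary is essentially that the hypothesis is powerful enough to force condition~\ref{item:thm-mbt-2} of Theorem~\ref{thm:mbt_kar_daugpkt} via the simple branch-disjointness fact above. No reduction to the positive cone $K_{X_{\mbt}}$ is needed either, since both the hypothesis and the estimate are sign-insensitive by $1$-unconditionality.
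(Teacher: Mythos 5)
Your proof is correct and follows the paper's own argument: both verify condition (ii) of Theorem~\ref{thm:mbt_kar_daugpkt} by picking a branch $B$ disjoint from $E$ and using $\|x - P_E x\| \ge \|P_B x\| = 1$. You additionally spell out the (trivial) upper bound $\|x - P_E x\| \le 1$, which the paper leaves implicit.
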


\begin{proof}
  Let $E \in E_{X_{\mbt}}$.
  There exists a branch $B$ such that $B \cap E = \emptyset$.
  Then $\|x - P_E x\| \ge \|P_B x\| = 1$.
  By Theorem~\ref{thm:mbt_kar_daugpkt} $x$ is a Daugavet-point.
\end{proof}

With a characterization of Daugavet-points in hand
we can now prove the main result of this section.

\begin{thm}\label{thm:MBT_have_daug_delta}
  In $X_{\mbt}$ we have that
  \begin{enumerate}
  \item
    there exists
    $x \in S_{X_{\mbt}}$ which is a Daugavet-point;
  \item
    there exists
    $w \in S_{X_{\mbt}}$ which is a delta-point,
    but not a Daugavet-point.
  \end{enumerate}
\end{thm}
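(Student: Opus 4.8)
The plan is to exhibit explicit elements of $S_{X_\mbt}$ and verify the two assertions using the characterization results we have already proved, principally Corollary~\ref{cor:konkret-betingelse-for-daugavetpkt} and Theorem~\ref{thm:mbt_kar_daugpkt}. For part (i) the natural candidate is the analogue of the delta-point from Section~\ref{sec:binarytree}, namely $x = \sum_{|t|\ge 1} 2^{-|t|} e_t$ (indexing over $\mbt$, so the sum starts at the two nodes $(0),(1)$); summing over a branch $B$ of $\mbt$ gives $\|P_B x\| = \sum_{n\ge 1} 2^{-n} = 1$, and since this holds for \emph{every} branch, Corollary~\ref{cor:konkret-betingelse-for-daugavetpkt} immediately gives that $x$ is a Daugavet-point. (One should double-check that $\|x\|=1$, i.e.\ that no $\lambda$-segment gives norm exceeding $1$; a $\lambda$-segment $[s,t]\cup t^+$ has $x$-mass $\sum_{s\preceq r\preceq t} 2^{-|r|} + 2\cdot 2^{-|t|-1} = \sum_{s\preceq r\preceq t}2^{-|r|} + 2^{-|t|}$, which telescopes to at most $2^{-|s|+1}\le 1$ since $|s|\ge 1$.)

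For part (ii) I would take $w$ to be a vector that has full norm on one distinguished branch but \emph{not} on all branches, so that it still produces long line segments through itself (making it a delta-point) while failing the antichain condition of Theorem~\ref{thm:mbt_kar_daugpkt}\ref{item:thm-mbt-2}. A clean choice: let $B_0 = \{s_n\}_{n\ge1}$ be the leftmost branch ($s_n = (0,0,\ldots,0)$) and set $w = \sum_{n\ge 1} 2^{-n} e_{s_n}$, so $\|w\|=1$ with $\supp(w)=B_0$. To see $w$ is a delta-point, given a slice $S(w^*,\delta)$ containing $w$, use that the tail $\sum_{n>N}2^{-n}e_{s_n}\to 0$ to find $N$ with $w^*\big(\sum_{n\le N}2^{-n}e_{s_n}\big) > 1-\delta$; then replace the tail by a single basis vector on an antichain node branching off $B_0$ below level $N$, where $w^*$ is small, exactly as in the proof of Theorem~\ref{thm:bt-har-delta-ikke-daugavet}\ref{thmbt:1} — this produces $y\in S(w^*,\delta)\cap S_{X_\mbt}$ (norm one, since it is supported on a branch with $\ell_1$-sum $1$) with $\|w-y\|$ arbitrarily close to $2$, and Proposition~\ref{prop:Delta_point_crit} applies. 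To see $w$ is \emph{not} a Daugavet-point, pick a single node $t$ on $B_0$ with $e_t^*(w)=2^{-|t|}$ large, say $t = s_1 = (0)$ with $e_t^*(w)=1/2$; then $\{t\}\in E_{X_\mbt}$ and $\|w - P_{\{t\}}w\| = \|\sum_{n\ge 2}2^{-n}e_{s_n}\| = 1/2 < 1$, so Theorem~\ref{thm:mbt_kar_daugpkt}\ref{item:thm-mbt-2} fails.

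The main obstacle I anticipate is not any single hard estimate but rather bookkeeping with the two families generating the norm (branches and $\lambda$-segments): one must make sure that the replacement vector $y$ used in part (ii) genuinely lies in the unit sphere, i.e.\ that putting a unit coefficient on an off-branch node does not inflate the norm via some $\lambda$-segment, and that $z_{t_0}-e_{t_n}$-type elements remain admissible. Concretely, when we prune the tail of $w$ on $B_0$ at level $N$ and instead add $e_u$ for $u$ an immediate sibling-descendant off $B_0$ at some level $\le N$, the resulting vector is supported on the branch through $u$ together with a prefix of $B_0$; checking it has norm one amounts to the same telescoping computation as above, plus noting $w^*(e_u)$ can be made $<\varepsilon$ by choosing $u$ among an infinite antichain of such off-branch nodes (whose closed span is isometrically $c_0$, forcing the functional values to $0$). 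Once these membership checks are in place, both conclusions follow directly from the cited results with no further work.
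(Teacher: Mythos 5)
Part (i) of your proposal is correct and coincides with the paper's argument: $x=\sum_{|t|\ge 1}2^{-|t|}e_t$ attains its norm on every branch, and Corollary~\ref{cor:konkret-betingelse-for-daugavetpkt} applies.

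Part (ii) fails: the vector $w=\sum_{n\ge 1}2^{-n}e_{s_n}$ supported on the single branch $B_0$ is \emph{not} a delta-point. Its span is an isometric copy of $\ell_1$, and the usual $\ell_1$ obstruction applies. Concretely, $w$ lies in the slice $S(e_{(0)}^*,3/4)=\{y\in B_{X_\mbt}: y_{(0)}>1/4\}$ (since $w_{(0)}=1/2$), and for every $y$ in this slice,
\[
\norm{w-y}\le \norm{w-\tfrac14 e_{(0)}}+\norm{y-\tfrac14 e_{(0)}}\le \tfrac34+1=\tfrac74<2,
\]
where the second estimate is Fact~\ref{fact:1-unc} with $E=\{(0)\}$. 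So Proposition~\ref{prop:Delta_point_crit} fails for $w$. (Equivalently: $M(w)=\{B_0\}$, so $M^\infty(w)=\{B_0\}$ and the slice $S(e^*_{(0)},1-\tfrac{w_{(0)}}{2})$ with $n=1$, $\eta=\tfrac12$ satisfies the hypotheses of Lemma~\ref{lem:condition-on-C_infty}.) The specific step in your sketch that breaks is the membership check you flagged as the ``main obstacle'': the candidate $y=\sum_{n\le N}2^{-n}e_{s_n}+e_u$, with $u$ branching off $B_0$ at level $k$, has norm $\sum_{n=1}^{k}2^{-n}+1=2-2^{-k}>1$, because the branch through $u$ collects both the $B_0$-prefix and the unit mass at $u$; the telescoping computation does not rescue this. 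Dropping the prefix gives a norm-one vector $e_u$ far from $w$, but there is then no reason for $e_u$ to lie in the given slice --- the paper's mechanism forces membership precisely because the far points arise as constituents of a convex decomposition of an approximation of the point itself, which requires the point to fan out over antichains. This is why the paper instead takes $w=L(x)$, a shifted copy of the full-tree vector: it still spreads over the whole tree (so the antichain construction from Theorem~\ref{thm:bt-har-delta-ikke-daugavet}~\ref{thmbt:1} yields delta-point-ness), yet has a hole at the node $(1)$, whence $\norm{w-P_{\{(0),(1,0)\}}w}=1/2<1$ and Theorem~\ref{thm:mbt_kar_daugpkt}~\ref{item:thm-mbt-2} rules out Daugavet-point-ness.
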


\begin{proof}
  Let $x = \sum_{t \in \mbt} 2^{-|t|}e_{t}$.
  We have that $x$ is a Daugavet-point
  by Corollary~\ref{cor:konkret-betingelse-for-daugavetpkt}.

  The next part of the proof is similar to the proof of
  Theorem~\ref{thm:bt-har-delta-ikke-daugavet}~\ref{thmbt:1}.
  We will show that a shifted version of $x$ is a delta-point
  which is not a Daugavet-point.
  Define an operator on the modified binary tree:
  \begin{equation*}
    L\left(
      \sum_{|t| > 0}
      a_{t} e_{t}
    \right)
    =
    \sum_{|t| \ge 0}
    a_{0^\frown t} e_{0^\frown t}
    +
    \sum_{|t| \ge 0}
    a_{1^\frown t} e_{(1,0)^\frown t},
  \end{equation*}
  where $t = \emptyset$ when $|t|=0$.

  Define $w = L(x)$.
  Let $x^* \in S_{X^*_\mbt}$ and $\delta > 0$
  such that $w \in S(x^*,\delta)$.
  Just as in the proof of
  Theorem~\ref{thm:bt-har-delta-ikke-daugavet}~\ref{thmbt:1}
  we can find $z_{t_0} \in S_{X_\mbt}$ whose support is
  an antichain (i.e. $z_{t_0} \in \acF$) and
  we can find $e_{t_n}$ such that
  $z_{t_0} - e_{t_n} \in S(x^*,\delta)$.
  Summing over a branch containing $t_n$ we
  get $\|w - (z_{t_0} - e_{t_n})\| = 2$.

  Let $E = \{(0),(1,0)\}$.
  Then $\|w - P_E w\|
  = \sum_{i=2}^\infty 2^{-i} = \frac{1}{2} < 1$
  so by Theorem~\ref{thm:mbt_kar_daugpkt}
  $w$ is not a Daugavet-point.
\end{proof}

In \cite{AHLP}, the property that the unit ball
of a Banach space is the closed convex hull
of its delta-points was studied.
We will next show that $X_\mbt$ satisfies
something much stronger, the unit ball
is the closed convex hull of a subset
of its Daugavet-points.

If $D$ is the set of all Daugavet-points in $X_{\mbt}$ define
\begin{equation*}
  D_B =
  \{x \in D: \norm{P_Bx} = 1
  \ \text{for all branches}\ B\ \text{of}\ \mbt\}.
\end{equation*}
The proof of Theorem~\ref{thm:MBT_have_daug_delta}
shows that $D_B$ is non-empty.

For $t_0 \in \mbt$,
let $S_{t_0}$ be the shift operator on $X_{\mbt}$
that shifts the root to $t_0$, that is
\begin{equation}\label{eq:defn_shift_op}
  S_{t_0}(\sum_{t \in \mbt} a_{t} e_{t})
  =
  \sum_{t \in \mbt} a_{t} e_{t_0^\frown t}
\end{equation}
It is clear that $S_{t_0}$ is an isometry on $X_{\mbt}$.

\begin{prop}\label{prop:mbt_har_convDLD2P}
  The space $X_{\mbt}$ satisfies $B_{X_\mbt} = \clonv\,(D_B)$.
\end{prop}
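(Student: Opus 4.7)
The plan is to prove $B_{X_\mbt} \subseteq \clonv(D_B)$ by showing the stronger inclusion $\acF \subseteq \conv(D_B)$. Since the basis is $1$-unconditional, the partial-sum projections $P_{M_n}$ have norm $1$, so finitely supported elements are norm-dense in $B_{X_\mbt}$; by Lemma~\ref{Lem:konvekskombinasjonavF} every such element is a convex combination of members of $\acF$. Hence once each $z \in \acF$ is shown to lie in $\conv(D_B)$, the inclusion $B_{X_\mbt} \subseteq \clonv(D_B)$ follows; the reverse inclusion is immediate since $D_B \subseteq B_{X_\mbt}$.

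Fix $z = \sum_{t \in E} \theta_t e_t \in \acF$, where $E = \supp(z)$ is a finite antichain with $|A \cap E| \le 1$ for all $A \in \A$ and $\theta_t \in \{\pm 1\}$. Introduce the finite antichain
\[
R := \{r \in \mbt : r \text{ is incomparable with every } t \in E,\ \text{and either } |r|=1 \text{ or the parent of } r \text{ is a strict ancestor of some } t \in E\}.
\]
Two combinatorial facts about $R$ drive the argument: every branch $B$ with $B \cap E = \emptyset$ contains a unique element of $R$ (namely the first node of $B$ which fails to be an ancestor of $E$), and the supports of $z$ and of each $S_r(x_0)$ are pairwise disjoint (because $r \in R$ is incomparable with $E$ and with every other $r' \in R$). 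For each sign pattern $\sigma \colon R \to \{\pm 1\}$ define
\[
z_\sigma := z + \sum_{r \in R} \sigma(r)\, S_r(x_0),
\]
where $x_0 = \sum_{t \in \mbt} 2^{-|t|} e_t$ is the Daugavet-point from Theorem~\ref{thm:MBT_have_daug_delta} and $S_r$ is the shift operator from \eqref{eq:defn_shift_op}. Averaging over the $2^{|R|}$ sign patterns gives $z = 2^{-|R|} \sum_\sigma z_\sigma$, so it suffices to prove $z_\sigma \in D_B$ for every $\sigma$.

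For this I will show $\|z_\sigma\| = 1$; granted this, the first combinatorial fact yields $\|P_B z_\sigma\| = 1$ for every branch $B$ (exactly one summand contributes, with absolute value $1$), and Corollary~\ref{cor:konkret-betingelse-for-daugavetpkt} then gives $z_\sigma \in D_B$. The main obstacle is the estimate on a $\lambda$-segment $S = [s,t] \cup t^+$. By the disjoint-support observation,
\[
\|P_S z_\sigma\| = |E \cap S| + \sum_{r \in R} \|P_S S_r(x_0)\|.
\]
The first term is at most $1$ by definition of $\acF$. A short geometric-sum computation gives $\|P_S S_r(x_0)\| \le 1$ for each $r$, and this contribution vanishes unless $r \preceq t$; only one such $r$ can occur since ancestors of $t$ form a chain while $R$ is an antichain. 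The crucial point is that the two potentially nonzero contributions cannot coexist: if $u \in E \cap S$ and $r \in R$ with $r \preceq t$ both existed, then $u$ and $r$ would both sit on the chain from the root to $t$ (or $r \preceq t \prec u$ when $u \in t^+$), forcing them comparable and contradicting $r \in R$. Therefore $\|P_S z_\sigma\| \le 1$, which completes the verification.
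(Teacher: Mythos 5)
Your proof is correct and follows essentially the same route as the paper's: decompose finitely supported elements of the ball into convex combinations of $\acF$ via Lemma~\ref{Lem:konvekskombinasjonavF}, then realize each $z\in\acF$ as an average of elements of $D_B$ obtained by attaching shifted copies of a branch-norm-attaining Daugavet-point to the branches that miss $\supp(z)$. The only (immaterial) differences are that the paper grafts the shifts onto all level-$m$ nodes whose ancestor path avoids $\supp(z)$ and averages over just the two uniform signs $z\pm x$, whereas you graft onto the minimal antichain $R$ and average over all $2^{|R|}$ sign patterns.
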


\begin{proof}
  Let $y \in B_{X_\mbt}$.
  We may assume that $y$ has finite support,
  since such $y$ are dense in $B_{X_\mbt}$.
  By Lemma~\ref{Lem:konvekskombinasjonavF},
  we can write $y = \sum_{k = 1}^n \lambda_k z_k$
  where $z_k \in \acF$, $\lambda_k \ge 0$
  and $\sum_{k=1}^n \lambda_k = 1$.

  Fix $z \in \acF$. Let $m := \max\{|t| : t \in \supp(z)\}$.
  \begin{align*}
    \mathcal{B}
    = \{t \in \mbt: |t| = m,
    \sum_{s \preceq t} |e^*_s(z)|  = 0\}.
  \end{align*}
  Choose any $x_0 \in D_B$ and
  use the shift operator in \eqref{eq:defn_shift_op}
  to define
  \begin{align*}
    x :=
    \sum_{t \in \mathcal{B}}
    S_t(x_0).
  \end{align*}
  Observe that $z \pm x$ takes its norm along
  every branch, so by
  Corollary~\ref{cor:konkret-betingelse-for-daugavetpkt}
  both $z \pm x \in D_B$.

  Repeat this construction for $z_k$ to create $x_k$
  for $k \in \{1,\ldots,n\}$.
  Then
  \begin{align*}
    y =
    \sum_{k = 1}^n \frac{\lambda_k}{2} (z_k + x_k)
    +
    \sum_{k = 1}^n \frac{\lambda_k}{2} (z_k - x_k),
  \end{align*}
  is a convex combination of Daugavet-points in $D_B$.
\end{proof}

Our next result is that $X_\mbt$ has the remarkable
property that the Daugavet-points are weakly
dense in the unit ball. So in a sense there
are lots of Daugavet-points, but of course
not enough of them in order for $X_\mbt$
to have the Daugavet property.
First we need a lemma.
For $t \in \mbt$, $S_t$ denotes
the shift operator defined in \eqref{eq:defn_shift_op} above.

\begin{lem}\label{lem:funtionals-on-ha}
  Let $x^*\in S_{X_\mbt^*}$ and $s\in \bt$.
  For any $x\in S_{X_\mbt}$ and
  $\varepsilon>0$ there exist some infinite antichain
  $E = \{t_i\}_{i=1}^\infty$ with the
  following properties
  \begin{enumerate}
  \item\label{item:func-ha1}
    $\left\|\sum_{i=1}^n e_{t_i}\right\| = 1$ for all
    $n \in \N$;
  \item\label{item:func-ha2}
    $s \preceq t$ for all $t \in E$;
  \item\label{item:func-ha3}
    $\left|x^*(S_t x) \right| < \varepsilon$ for all $t\in E$.
  \end{enumerate}
\end{lem}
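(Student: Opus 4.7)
The plan is to construct the antichain in two stages: first produce a natural candidate antichain $E_0 \subset \mbt$ below $s$ whose elements pairwise have distinct immediate predecessors (which will automatically give (i)), and then thin $E_0$ to an infinite subset $E$ on which $|x^*(S_t x)|$ is small. To produce $E_0$, I fix a branch $B = \{s = b_0, b_1, b_2, \ldots\}$ of $\bt$ starting at $s$ and, for each $i \ge 1$, let $u_i$ be the unique element of $b_{i-1}^+ \setminus \{b_i\}$. Every $u_i$ lies in $\mbt$ and satisfies $s \preceq u_i$, so (ii) is immediate. Distinct $u_i, u_j$ have distinct parents, so they are not siblings, and they are pairwise incomparable: for $i < j$, the relation $u_i \preceq u_j$ together with $u_j \succeq b_{j-1} \succeq b_i$ would force $u_i = b_i$, contradicting that $u_i$ is the sibling of $b_i$.

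Condition (i) reduces to showing $|A \cap \{u_1, \ldots, u_n\}| \le 1$ for every $A \in \A$. On a subset of a branch this is clear because an antichain meets a chain in at most one element. On a subset of a $\lambda$-segment $[s',t'] \cup (t')^+$, the chain $[s',t']$ contributes at most one $u_i$, the pair $(t')^+$ contributes at most one since no two $u_i$'s are siblings, and a simultaneous contribution from both parts would place a $u_i \preceq t'$ comparable with a $u_j \succ t'$, contradicting the antichain property.

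For (iii) the key analytic input is that for every finite $F \subset \N$ and every choice of signs $\sigma_i \in \{\pm 1\}$,
\[
\Big\| \sum_{i \in F} \sigma_i S_{u_i}(x) \Big\| \le \|x\| = 1.
\]
Granting this, applying $x^*$ and choosing $\sigma_i = \sgn(x^*(S_{u_i} x))$ gives $\sum_{i \in F} |x^*(S_{u_i} x)| \le 1$; letting $|F| \to \infty$ yields $\sum_{i=1}^\infty |x^*(S_{u_i} x)| \le 1$. Hence only finitely many $i$ can violate $|x^*(S_{u_i} x)| < \varepsilon$, and $E := \{u_i : |x^*(S_{u_i} x)| < \varepsilon\}$ is an infinite subset of $E_0$ that inherits (i) and (ii).

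The main obstacle is the norm estimate. Writing $y = \sum_{i \in F} \sigma_i S_{u_i}(x)$, the supports of the summands lie in the pairwise disjoint subtrees $T_{u_i} = \{u_i^\frown v : v \in \mbt\}$, so at most one summand contributes to $|y(u)|$ for each $u$. For $A$ contained in a branch of $\mbt$, the antichain property forces at most one $u_i$ to lie on that branch, and the shifted set $\{v \in \mbt : u_i^\frown v \in A\}$ is contained in a branch of $\mbt$, bounding the total by $\|x\|$. For $A \subseteq [s',t'] \cup (t')^+$ a short case analysis shows that only a $u_i$ with $u_i \preceq t'$ can contribute: if $u_i$ is incomparable with $t'$ then $T_{u_i}$ is disjoint from $[s',t'] \cup (t')^+$, and if $u_i \succ t'$ then its strict descendants lie strictly below $(t')^+$ and so miss the segment (the node $u_i$ itself, when in $(t')^+$, is not in $\supp(S_{u_i}(x))$). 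Since the $u_i$ are an antichain, at most one satisfies $u_i \preceq t'$, and unshifting the intersecting portion of the segment through $u_i$ yields a $\lambda$-segment of $\mbt$, so the contribution is again bounded by $\|x\|$. Assembling the branch and $\lambda$-segment bounds yields $\|y\| \le 1$ and closes the argument.
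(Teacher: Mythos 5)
Correct, and essentially the same approach as the paper: build an infinite antichain $E_0$ below $s$ satisfying (i) and (ii), use the disjoint-support norm-one estimate for $\sum_i \sigma_i S_{t_i}(x)$ to conclude $x^*(S_{t_i}x)\to 0$, then discard the finitely many bad indices. Your write-up fills in details the paper leaves implicit (the explicit sibling construction of $E_0$, the verification that unshifting an intersection with a branch or $\lambda$-segment again lands in $\A$, and the sign trick to get summability of $|x^*(S_{t_i}x)|$), but the structure of the argument is identical.
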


\begin{proof}
  Pick any $x^*\in S_{X^*_\mbt}$, $s\in \bt$ and $x\in S_{X_{\mbt}}$.
  It is not difficult to find an infinite antichain
  $E = \{ t_i \}_{i=1}^\infty$
  satisfying
  \ref{item:func-ha1} and \ref{item:func-ha2}.
  Since $E$ is an antichain we have
  $\|\sum_{i=1}^n S_{t_i}(x)\| = 1$
  for all $n \in \N$.
  Hence
  \begin{equation*}
    \lim_{i \rightarrow \infty}x^*\left(S_{t_i}x \right) = 0,
  \end{equation*}
  and then we can find $n \in \N$ such that
  $\left|x^*\left(S_{t_i}x \right) \right| < \varepsilon$
  for all $i \geq n$.
  Now $E' = E \setminus \left\{t_i \right\}_{i=1}^n$
  satisfies \ref{item:func-ha1}, \ref{item:func-ha2}
  and \ref{item:func-ha3}.
\end{proof}

\begin{thm}
  \label{thm:mbt-daugpoints-weakly-dense}
  In $X_{\mbt}$
  every non-empty relatively weakly open subset of $B_{X_{\mbt}}$
  contains a Daugavet-point.
\end{thm}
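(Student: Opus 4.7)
The plan is to exhibit, inside any nonempty relatively weakly open subset $W$ of $B_{X_\mbt}$, an element of the set $D_B$ of branch-norming Daugavet-points introduced before Proposition~\ref{prop:mbt_har_convDLD2P}; by Corollary~\ref{cor:konkret-betingelse-for-daugavetpkt}, such an element is automatically a Daugavet-point. I begin with the standard reductions: shrink $W$ to a basic weakly open set
\[
W \;=\; \bigl\{\, y \in B_{X_\mbt} : |x_j^*(y - \tilde{x})| < \eps,\ j = 1, \ldots, N \,\bigr\}
\]
with $x_j^* \in S_{X_\mbt^*}$ and some $\tilde{x} \in W$, and, using that the finitely supported vectors are norm-dense in $B_{X_\mbt}$, take $\tilde{x}$ with finite support. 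Set $m := \max\{|t| : t \in \supp(\tilde{x})\}$ (with the convention $m = 0$ if $\tilde{x} = 0$).

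For each $s \in \mbt$ with $|s| = m+1$, put $c_s := 1 - \sum_{t \preceq s,\ |t| \le m} |e_t^*(\tilde{x})| \in [0,1]$. I would then search for $y$ of the form
\[
y \;:=\; \tilde{x} \;+\; \sum_{|s|=m+1} c_s\, S_s(\hat{\zeta}_s),
\]
where each $\hat{\zeta}_s \in D_B$ is to be chosen. Every branch $B$ of $\mbt$ meets $\{|t| = m+1\}$ in a unique node $s$, and the two summands of $P_B y$ then have disjoint supports; since $\hat{\zeta}_s$ carries $\ell_1$-mass $1$ along the shifted branch, $\|P_B y\|_{\ell_1} = (1-c_s) + c_s \cdot 1 = 1$. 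A case analysis on $\lambda$-segments --- lying entirely above depth $m+1$, entirely inside a single subtree below some depth-$(m+1)$ node, or crossing depth $m+1$ through its unique depth-$(m+1)$ node --- exactly as in the norm computations of Proposition~\ref{prop:mbt_har_convDLD2P} gives $\|y\| = 1$. Thus $y \in D_B$ \emph{regardless of the choice of the} $\hat{\zeta}_s$, by Corollary~\ref{cor:konkret-betingelse-for-daugavetpkt}.

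It remains to choose the $\hat{\zeta}_s$ so that $y \in W$, i.e.\ $\bigl|\sum_s c_s\, x_j^*(S_s(\hat{\zeta}_s))\bigr| < \eps$ for each $j$. Since there are finitely many $s$ and $c_s \le 1$, it is enough to arrange
\[
\bigl|(x_j^* \circ S_s)(\hat{\zeta}_s)\bigr| < \eps/(N \cdot 2^{m+1}) \quad \text{for all } j,s.
\]
This reduces the theorem to the sublemma: \emph{given finitely many $y_1^*, \ldots, y_N^* \in X_\mbt^*$ and $\delta > 0$, there exists $\hat{\zeta} \in D_B$ with $|y_j^*(\hat{\zeta})| < \delta$ for all $j$}. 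To prove the sublemma I would iterate Lemma~\ref{lem:funtionals-on-ha}: starting from a fixed $x_0 \in D_B$ (for instance the one in the proof of Theorem~\ref{thm:MBT_have_daug_delta}) and passing to common subsequences, below any prescribed node I obtain an infinite antichain $E = \{t_i\}$ with $\|\sum e_{t_i}\| = 1$ and $|y_j^*(S_{t_i}(x_0))|$ simultaneously as small as desired for all $j$. I would then build $\hat{\zeta}$ recursively: pick $t_1 \in E$, attach $S_{t_1}(x_0)$ (which handles all branches through $t_1$), and for each off-path sibling along the chain from the root to $t_1$ repeat the construction inside that sibling's subtree with a geometrically shrinking tolerance. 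Passing to the limit, the resulting $\hat{\zeta}$ lies in $D_B$, and a telescoping bound on the tolerances forces $|y_j^*(\hat{\zeta})| < \delta$.

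The main obstacle is the sublemma: since no countable antichain can cover the uncountably many branches of $\mbt$, one cannot simply attach finitely many shifted copies of $x_0$ along a single antichain from Lemma~\ref{lem:funtionals-on-ha}. The delicate point is to iterate the lemma through all off-path subtrees while balancing, at every level of the recursion, the number of subtrees being treated against the tolerance fed to the lemma, so that the total weak contribution remains below $\delta$. Once the sublemma is established, the ansatz above yields a single $y \in W \cap D_B$, and the theorem follows.
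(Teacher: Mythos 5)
Your reduction is clean up to the last step, and the structural claim is correct: if every $\hat{\zeta}_s$ lies in $D_B$, then $y=\tilde{x}+\sum_{|s|=m+1}c_s S_s(\hat{\zeta}_s)$ norms every branch and every $\lambda$-segment contributes at most $1$, so $y\in D_B$ and is a Daugavet-point by Corollary~\ref{cor:konkret-betingelse-for-daugavetpkt}. The gap is in the sublemma, which is where all the difficulty sits. Your recursive construction of $\hat{\zeta}$ does not produce an element of $D_B$: at each stage you attach a full copy $S_w(x_0)$ at a single deep ``terminal'' node $w$ and recurse into the off-path siblings, so the support of the limit $\hat{\zeta}$ is contained in the union of the subtrees below the terminal nodes. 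A branch that at every stage diverges from the chain leading to the current terminal node (for instance, always turning into the first off-path sibling) passes through no terminal node, hence carries \emph{zero} mass of $\hat{\zeta}$. Such dodging branches always exist because each terminal node is chosen strictly below the root of its cell, so $\hat{\zeta}\notin D_B$ and your ansatz then gives $\|P_By\|=1-c_s<1$ along those branches. This is not a tuning issue with the tolerances; it is the same obstruction you flagged yourself (no countable family of deep attachment points can meet every branch), and it reappears at every level of the recursion. I also do not see an easy repair of the sublemma: sign-flipping a fixed element of $D_B$ fails already for $y^*=e^*_{(0)}$, and pushing mass to high levels only handles finitely supported functionals, which are not dense in $X_{\mbt}^*$ since the basis is not shrinking.

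The paper sidesteps this entirely by not insisting that the constructed point lie in $D_B$. It attaches, below each level-$m$ node $t$ with $\mu_t>0$, a \emph{single} shifted copy $\mu_t S_{b_t}(g)$ at one deep node $b_t$ chosen via Lemma~\ref{lem:funtionals-on-ha} so that $|x_i^*(S_{b_t}g)|<\eps/2^m$, and then verifies only criterion \ref{item:thm-mbt-2} of Theorem~\ref{thm:mbt_kar_daugpkt}: for each $E\in E_{X_{\mbt}}$ one needs just \emph{one} branch avoiding $E$ along which $x$ has full mass, and since $E$ is finite such a branch can always be routed through the relevant $b_t$. In other words, the weaker characterization $\|x-P_Ex\|=1$ for all $E\in E_{X_{\mbt}}$ is exactly what makes a single deep attachment per node suffice, simultaneously giving the weak-neighbourhood estimate and the Daugavet property. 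If you want to keep your ansatz, replace the requirement $\hat{\zeta}_s\in D_B$ by the requirement that $\tilde{x}+\sum_s c_sS_s(\hat{\zeta}_s)$ satisfy Theorem~\ref{thm:mbt_kar_daugpkt}\,\ref{item:thm-mbt-2}, and take $\hat{\zeta}_s=S_{b_s}(g)$ with $b_s$ deep; that is the paper's proof.
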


\begin{proof}
  Since vectors with finite support are norm dense in $B_{X_{\mbt}}$,
  it enough show that for any $y \in B_{X_{\mbt}}$ with finite support
  and any relatively weakly open neighbourhood of $y$ of the form
  \[
   W := \{x \in B_{X_{\mbt}}: |x_i^*(y - x)| < \eps, i = 1, \ldots, n\},
  \]
  where $x_i^* \in S_{X^*_\mbt}, i = 1, \ldots, n$ and $\eps > 0$,
  contains a Daugavet-point.

  Let $m:= \max\{|t|: t \in \supp(y)\},$
  and for $t \in \mbt$ with $|t| = m$ define
  \begin{equation*}
    \mu_t := 1 - \sum_{s \preceq t} |e^*_s(y)|
  \end{equation*}
  and
  \begin{equation*}
    \mathcal{N} := \{t \in \mbt : |t| = m, \mu_t > 0\}.
  \end{equation*}
  From Corollary~\ref{cor:konkret-betingelse-for-daugavetpkt}
  we have that $g = \sum_{s \in \mbt} 2^{-|s|} e_s$
  is a Daugavet-point.
  By Lemma~\ref{lem:funtionals-on-ha}
  for each $t \in \mathcal{N}$ there exists
  $t \preceq b_t$ such that
  $|x_i^*(S_{b_t}g)| < \eps/2^m$ for $i = 1, \ldots, n$.
  Now put
  \[
    x = y + \sum_{t \in \mathcal{N}} \mu_t S_{b_t}(g).
  \]
  By construction $x \in S_{X_{\mbt}}$ and we have $x \in W$ since
  \begin{align*}
    |x_i^*(y - x)|
    &=\left| x_i^*
      \left(
        \sum_{t \in \mathcal{N}} \mu_t S_{b_t}(g)
      \right)
    \right|
    % \\ &
    \le \sum_{t \in \mathcal{N}} \mu_t |x_i^* (S_{b_k} g)|
    < \frac{\eps}{2^m} \sum_{t \in \mathcal{N}} \mu_t < \eps.
  \end{align*}
  Using Theorem~\ref{thm:mbt_kar_daugpkt}
  we will show that $x$ is a Daugavet-point.
  Indeed, let $E \in E_{X_{\mbt}}$.
  Then there exists a branch $A$ with $A \cap E = \emptyset$.
  Let $t \in A$ with $|t| = m$.
  If $t \notin \mathcal{N}$, then
  \begin{align*}
    \norm{x - P_Ex}
    \ge \sum_{s \preceq t} |e^*_s(y)| = 1.
  \end{align*}
  If $t \in \mathcal{N}$,
  then since $S_{b_{t}}(g)$ is a Daugavet-point,
  there exists a branch $B$ with $t \in B$ such that
  $\norm{S_{b_t}(g) - P_ES_{b_t}(g)}
  = \sum_{s \in B}|S_{b_t}(g)_s| = 1$.
  Thus
  \begin{align*}
    \norm{x - P_Ex}
    &\ge
    \sum_{s \preceq t} |e^*_s(y)|
    +
    \sum_{\substack{s \in B,\\ s \succ b_t}} \mu_t|S_{b_t}(g)_s|
    = 1 - \mu_{t} + \mu_{t} = 1,
  \end{align*}
  and we are done.
\end{proof}

\begin{quest}
  How ``massive'' does the set of Daugavet-points in $S_X$
  have to be in order to ensure that a Banach space $X$ fails to
  have an unconditional basis?
\end{quest}

If $S$ is a slice of the unit ball of $X_{\mbt}$,
then the above proposition tells us that
$S$ contains a Daugavet-point $x$.
Then by definition of Daugavet-points there
exists for any $\varepsilon > 0$ a $y \in S$
with $\|x-y\| \ge 2 - \varepsilon$.
Thus the diameter of every slice of the unit ball
of $X_{\mbt}$ is $2$, that is $X_{\mbt}$
has the \emph{local diameter two property}.

The next natural question is whether the diameter of every non-empty
relatively weakly open neighborhood in $B_{X_\mbt}$
equals $2$, that is, does $X_\mbt$ have
the \emph{diameter two property}?
The answer is no, in fact,
every Daugavet-point in $D_B$ has a weak
neighborhood of arbitrary small diameter.
Let us remark that the first example of
a Banach space with the local diameter two property,
but failing the diameter two property
was given in \cite{BGLPRZ4}.

\begin{prop}
  In $X_{\mbt}$ every $x \in D_B$ is a point of
  weak- to norm-continuity for the identity map on $X_{\mbt}$.
  In particular, $X_\mbt$ fails the diameter two property.
\end{prop}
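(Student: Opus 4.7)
Given $\varepsilon > 0$, the goal is to produce a weakly open neighborhood $U$ of $x$ such that $\|y - x\| \le \varepsilon$ for every $y \in U \cap B_{X_\mbt}$; this will give weak-to-norm continuity at $x$, and by the triangle inequality the diameter of $U \cap B_{X_\mbt}$ will then be at most $2\varepsilon$, so the diameter two property fails at once once one knows $D_B \neq \emptyset$ (which follows from the proof of Theorem~\ref{thm:MBT_have_daug_delta}). The central feature of $x \in D_B$ that I will exploit is that $\|P_B x\| = \sum_{t \in B}|x_t| = 1$ for every branch $B$ of $\mbt$ (the norm restricted to a branch being the $\ell_1$-norm), so that $x$ saturates the branch constraint everywhere.

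Since $(e_t)_{t \in \mbt}$ is a Schauder basis of $X_\mbt$, I would first choose a finite $F \subset \mbt$ with $\|x - P_F x\| < \varepsilon/8$ and set
\[
U := \{y \in X_\mbt : |e_t^*(y-x)| < \delta,\ t \in F\}, \qquad \delta := \varepsilon/(8|F|).
\]
As $\|z\| = \sup_{A \in \A}\sum_{t \in A}|z_t|$ and every $A \in \A$ is contained in a branch or in a $\lambda$-segment, the task reduces to bounding $\sum_{t \in A}|y_t - x_t|$ separately on branches and on $\lambda$-segments, for $y \in U \cap B_{X_\mbt}$.

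For a branch $B$, split over $B \cap F$ and $B \setminus F$. The $B \cap F$ contribution is at most $|F|\delta < \varepsilon/8$ by the definition of $U$, and $\sum_{t \in B \setminus F}|x_t| \le \|x - P_F x\| < \varepsilon/8$ since $B \setminus F$ lives inside a branch (where the norm is $\ell_1$). For the remaining sum $\sum_{t \in B \setminus F}|y_t|$, combine $\sum_{t \in B}|y_t| \le \|y\| \le 1$ with
\[
\sum_{t \in B \cap F}|y_t| \ge \sum_{t \in B \cap F}|x_t| - |F|\delta = 1 - \sum_{t \in B \setminus F}|x_t| - |F|\delta \ge 1 - \|x - P_F x\| - |F|\delta,
\]
where the middle equality uses precisely $\sum_{t \in B}|x_t| = 1$, i.e., the $D_B$ hypothesis. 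Subtracting gives $\sum_{t \in B \setminus F}|y_t| \le \|x - P_F x\| + |F|\delta < \varepsilon/4$, and altogether $\sum_{t \in B}|y_t - x_t| < \varepsilon/2$.

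For a $\lambda$-segment $S = [s,t] \cup t^+$, I would pick branches $B_0, B_1$ of $\mbt$ extending $[s,t] \cup \{t^\frown 0\}$ and $[s,t] \cup \{t^\frown 1\}$ respectively (which exist since every node of $\mbt$ has infinitely many descendants), and write
\[
\sum_{r \in S}|y_r - x_r| = \sum_{r \in [s,t]\cup \{t^\frown 0\}}|y_r - x_r| + |y_{t^\frown 1} - x_{t^\frown 1}| \le \sum_{r \in B_0}|y_r - x_r| + \sum_{r \in B_1}|y_r - x_r| < \varepsilon,
\]
applying the branch estimate twice. Hence $\|y - x\| \le \varepsilon$, as desired, and the diameter two property statement follows. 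The only genuine technical obstacle is the $\lambda$-segment case, neutralised by the two-branch split above which reduces it cleanly to the branch estimate.
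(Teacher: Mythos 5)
Your proof is correct and follows the same overall strategy as the paper: build a weak neighborhood of $x$ controlled by finitely many coordinate functionals, then use the fact that $x \in D_B$ saturates every branch (so that $\sum_{t \in B \cap F}|y_t|$ is forced close to $1$, hence the tail $\sum_{t \in B \setminus F}|y_t|$ is small). The cosmetic differences are immaterial: you take an arbitrary finite set $F$ with $\|x - P_F x\| < \eps/8$ and a uniform $\delta = \eps/(8|F|)$, whereas the paper takes $F = \{t : |t| \le n\}$ and level-dependent tolerances $\eps 2^{-|t|-3}$; these yield the same estimates.

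The one genuine improvement in your write-up is the explicit two-branch split for a $\lambda$-segment $S = [s,t] \cup t^+$. The paper's displayed inequality $\sum_{\substack{t\in A\\|t|\le n}}|x_t| - \eps/8 > 1 - \eps/4$ requires $\sum_{t \in A}|x_t|$ to be close to $1$, which is guaranteed by the $D_B$ hypothesis when $A$ is a full branch but not when $A$ is (a subset of) a $\lambda$-segment: for instance, for $x = \sum_t 2^{-|t|}e_t$ and $S$ starting at level $2$ one has $\sum_{r\in S}|x_r| = 1/2$. So as written the paper's argument only treats the branch case cleanly, and your reduction of the $\lambda$-segment case to two branch estimates (one for $[s,t]\cup\{t^\frown 0\}$, one for $\{t^\frown 1\}$) is exactly the right patch. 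You lose a factor of $2$ in the final bound ($\|y-x\|\le\eps$ giving diameter $\le 2\eps$ rather than $\le \eps$), which of course costs nothing since $\eps$ is arbitrary.
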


\begin{proof}
  Let $\eps > 0$ and $x \in D_B$.
  Let $n \in \N$ be such that $\norm{\sum_{|t|>n} x_t e_t} < \frac{\eps}{8}$.
  Consider the weak neighborhood $W$ of $x$
  \begin{align*}
    W =
    \{y \in B_{X_{\mbt}} :
    |e_{t}^*(x  - y)| < \frac{\eps}{2^{|t| + 3}}, |t| \le n\}.
  \end{align*}
  We want to show that the diameter of $W$ is less than $\eps$.
  Let $y = \sum_{t \in \mbt} y_t e_t \in W$.
  Let $A$ be a subset of a branch or of
  a $\lambda$-segment in $\mbt$.
  Since $|x_{t} - y_{t}| < \eps 2^{-|t| -3}$ for $|t| \le n$,
  $\norm{\sum_{|t|>n} x_t e_t} < \frac{\varepsilon}{8}$, and
  $x$ attains its norm along every branch of $\mbt$,
  we have
  \begin{equation*}
    \sum_{\substack{t \in A\\ |t| \le n}} |y_{t}|
    >
    \sum_{\substack{t \in A\\ |t| \le n}} |x_{t}|-|x_t-y_t|
    >
    \sum_{\substack{t \in A\\ |t| \le n}} |x_{t}|
    -
    \frac{\eps}{8}
    >
    1 - \frac{\eps}{4}.
  \end{equation*}
  Hence
  $\sum_{\substack{t \in A\\ |t| > n}} |y_{t}|
  < \frac{\eps}{4}$,
  and thus
  \begin{align*}
    \sum_{t \in A}|x_{t} - y_{t}|
    &=
    \sum_{\substack {t \in A\\ |t| \le n}}|x_{t}- y_{t}|
    +
    \sum_{\substack {t \in A\\ |t| > n}}|x_{t}- y_{t}|
    \\
    &<
    \sum_{\substack {t \in A\\ |t| \le n}}\eps2^{-|t|-3}
    +
    \sum_{\substack {t \in A\\ |t| > n}} |x_{t}|
    +
    \sum_{\substack {t \in A\\ |t| > n}}| y_{t}|
    \\
    &<
    \frac{\eps}{8}
    +
    \frac{\eps}{8}
    +
    \frac{\eps}{4}
    = \frac{\eps}{2}.
  \end{align*}
  From this it follows that
  the diameter of $W$ is less than $\varepsilon$.
\end{proof}

Recall from \cite{ALL}
that a Banach space $X$ is \emph{locally almost square}
if for every $x \in S_X$ and $\varepsilon > 0$
there exists $y \in S_X$ such that
$\|x \pm y\| \le 1 + \varepsilon$.

It is known that every locally almost square Banach
space $X$ has the local diameter two property.
As noted above $X_{\mbt}$ has the local diameter
two property, but it is not locally almost square
as the following proposition shows.

\begin{prop}
  $X_{\mbt}$ is not locally almost square.
\end{prop}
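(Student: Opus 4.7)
The plan is to exhibit a concrete unit vector $x \in S_{X_\mbt}$ together with a fixed $\varepsilon_0 > 0$ for which no $y \in S_{X_\mbt}$ satisfies $\max(\|x+y\|, \|x-y\|) \le 1 + \varepsilon_0$. The natural candidate is $x = e_{(0)} + e_{(1)}$, whose norm equals $1$ because the set $\{(0),(1)\}$ is a $\lambda$-segment (the degenerate case $[s,t]=\emptyset$), and no branch or $\lambda$-segment of $\mbt$ contains both $(0)$ and $(1)$ together with any further node where $x$ does not vanish.

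First I would fix an arbitrary $y \in S_{X_\mbt}$ and evaluate the norms $\|x\pm y\|$ against the single test set $A = \{(0),(1)\} \in \A$. Writing $c = y_{(0)}$ and $d = y_{(1)}$, the definition of the norm on $\ha$ gives
\begin{equation*}
  \|x+y\| \ge |1+c| + |1+d|,
  \qquad
  \|x-y\| \ge |1-c| + |1-d|.
\end{equation*}
The key elementary fact is that $|1+r| + |1-r| \ge 2$ for every $r \in \R$. Summing the two inequalities above and applying this pointwise yields $\|x+y\| + \|x-y\| \ge 4$, hence
\begin{equation*}
  \max\bigl(\|x+y\|, \|x-y\|\bigr) \ge 2.
\end{equation*}
This bound is independent of $y$, so choosing for example $\varepsilon_0 = \tfrac12$ shows that no $y \in S_{X_\mbt}$ can fulfill both $\|x+y\| \le 1+\varepsilon_0$ and $\|x-y\| \le 1+\varepsilon_0$. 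Therefore $X_\mbt$ fails to be locally almost square.

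There is really no obstacle here beyond picking the right witness; the entire argument rests on the observation that $(0)$ and $(1)$ form a $\lambda$-segment, so the two coordinates $y_{(0)}, y_{(1)}$ are forced to contribute additively to the norms $\|x \pm y\|$. Writing the proof amounts to little more than displaying $x$, invoking the defining supremum of the norm on $\ha$ with the single set $\{(0),(1)\}$, and combining the two resulting lower bounds via $|1+r|+|1-r|\ge 2$.
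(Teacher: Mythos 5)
Your proposed witness is not a unit vector, and this sinks the argument. In $X_{\mbt}$ the norm is $\|\sum_t a_t e_t\| = \sup_{A \in \A} \sum_{t \in A} |a_t|$, the supremum of the $\ell_1$-sums over subsets of branches and of $\lambda$-segments. Since $\{(0),(1)\}$ is a $\lambda$-segment, membership of this set in $\A$ forces the two coordinates to be \emph{added}, so $\|e_{(0)} + e_{(1)}\| = 2$, not $1$. You have the implication backwards: the very fact you cite as the reason the norm equals $1$ is the reason it equals $2$. With $\|x\| = 2$, your conclusion $\max(\|x+y\|,\|x-y\|) \ge 2$ is just the triangle inequality $\|x+y\| + \|x-y\| \ge 2\|x\|$ and says nothing about the geometry of $X_{\mbt}$.

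The gap cannot be patched by normalizing. If you replace $x$ by $\tfrac12(e_{(0)} + e_{(1)}) \in S_{X_{\mbt}}$, the same computation on the test set $A = \{(0),(1)\}$ gives $\|x+y\| + \|x-y\| \ge (|\tfrac12 + c| + |\tfrac12 - c|) + (|\tfrac12 + d| + |\tfrac12 - d|) \ge 2$, hence only $\max(\|x+y\|,\|x-y\|) \ge 1$, which holds in every Banach space and never contradicts local almost squareness. Any argument that uses only the symmetric sum $\|x+y\| + \|x-y\|$ against a single norming set is doomed to reproduce the triangle inequality. The paper instead takes the asymmetric witness $x = \tfrac14 e_{(0)} + \tfrac34 e_{(1)}$ (whose norm really is $\tfrac14 + \tfrac34 = 1$ via the $\lambda$-segment $\{(0),(1)\}$) and exploits where $y$ attains \emph{its} norm: first one shows $|y_{(0)}| + |y_{(1)}| < \tfrac34$, so the norming set $A$ for $y$ meets $\{(0),(1)\}$ in exactly one node $s$, and then $\max_{\pm}\|x \pm y\| \ge |x_s| + |y_s| + (1 - |y_s|) = 1 + |x_s| \ge \tfrac54$. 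That interplay between the support of $x$ and the norming sets of $y$ is the missing idea in your proposal.
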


\begin{proof}
  Consider $x = \frac{1}{4}e_{(0)} + \frac{3}{4}e_{(1)}$.
  Let $0 < \eps < \frac{1}{4}$ and suppose there exists
  $y = \sum_{t \in \mbt} y_t e_t \in S_{X_{\mbt}}$
  with $\norm{x \pm y} \le 1 + \eps < \frac{5}{4}$.
  Then clearly $|y_{(1)}| \le \frac{1}{4} +\eps$.
  By considering $-y$ if necessary we may assume that $y_{(1)} \ge 0$.
  Then
  \begin{align*}
    1 + \eps
    &\ge
    \max_{\pm}\{|\frac{1}{4} \pm y_{(0)}|
    + |\frac{3}{4} \pm y_{(1)}|\}\\
    & \ge |\frac{1}{4} - y_{(0)}| + \frac{3}{4} + |y_{(1)}|\\
    & \ge |y_{(0)}| - \frac{1}{4} + \frac{3}{4} + |y_{(1)}|,
  \end{align*}
  which yields $|y_{(0)}| + |y_{(1)}| \le \frac{1}{2} + \eps < \frac{3}{4}$.
  Thus since $\norm{y} = 1$ there must exist
  a subset $A$ of a branch or a $\lambda$-segment
  such that $|A \cap \{(0),(1)\}| = 1$
  and $\sum_{t \in A} |y_t| = 1$.
  Let $s \in A \cap \{(0),(1)\}$.
  \begin{align*}
    \frac{5}{4}
    > \norm{x \pm y}
    &=
    \max_{\pm} | x_s \pm y_s |
    +
    \sum_{\substack{t \in A \\ t \neq s}} |y_t|
    = |x_s| + |y_s| + 1 - |y_s|
  \end{align*}
  and we get the contradiction $|x_s| < \frac{1}{4}$.
\end{proof}

Recall from \cite{HLP} that a Banach space $X$
is \emph{locally octahedral} if for every $x \in S_X$
and $\varepsilon > 0$, there exists $y \in S_X$
such that $\|x \pm y\| \ge 2 - \varepsilon$.

It is known that every Banach space with
the Daugavet property is octahedral.
Even though the modified binary tree space
have lots of Daugavet-points, as seen
in Proposition~\ref{prop:mbt_har_convDLD2P},
it is not even locally octahedral.

\begin{prop}
  $X_{\mbt}$ is not locally octahedral.
\end{prop}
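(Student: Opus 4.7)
The plan is to exhibit a specific $x \in S_{X_\mbt}$ for which local octahedrality fails outright. Take $x = \frac{1}{2}(e_{(0)} + e_{(1)})$; this lies in $S_{X_\mbt}$ because $\{(0),(1)\}$ is a $\lambda$-segment. I will show that for any $\varepsilon < 1/2$ and any $y \in S_{X_\mbt}$, the two inequalities $\|x+y\| \ge 2 - \varepsilon$ and $\|x-y\| \ge 2 - \varepsilon$ cannot hold simultaneously.

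Assuming such a $y$ existed, pick $A_\pm \in \A$ witnessing the two norms, i.e.\ $\sum_{t \in A_\pm}|x_t \pm y_t| \ge 2 - \varepsilon$. The first key step is a structural observation about $\A$: the only element of $\A$ containing more than one point of $\{(0),(1)\}$ is $\{(0),(1)\}$ itself. Indeed, branches of $\mbt$ start at $(0)$ or at $(1)$, and any $\lambda$-segment $[s,t] \cup t^+$ with $[s,t] \neq \emptyset$ has $s \in \mbt$ (so $|s| \ge 1$), hence meets $\{(0),(1)\}$ in at most one point. If some $A_\pm \neq \{(0),(1)\}$, then the triangle inequality together with $\|y\| \le 1$ gives
\[
\sum_{t \in A_\pm}|x_t \pm y_t| \;\le\; \frac{1}{2} + \|y\| \;\le\; \frac{3}{2} \;<\; 2 - \varepsilon,
\]
a contradiction. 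Hence $A_+ = A_- = \{(0),(1)\}$.

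The remaining step, where the finest bookkeeping lives, is to analyze the resulting pair of inequalities in the two scalars $y_{(0)}, y_{(1)}$. Adding them and using the identity $|\frac12 + a| + |\frac12 - a| = \max(1, 2|a|)$, the assumption reduces to
\[
\max(1, 2|y_{(0)}|) + \max(1, 2|y_{(1)}|) \;\ge\; 4 - 2\varepsilon.
\]
A three-way case analysis on whether each $|y_{(i)}|$ exceeds $1/2$ closes the argument: in the two symmetric cases the left-hand side is at most $2$ (in the ``both $> 1/2$'' case, using the crucial bound $|y_{(0)}| + |y_{(1)}| \le \|y\| = 1$ coming from $\{(0),(1)\} \in \A$), while in the mixed case the left-hand side is at most $3$, forcing $\varepsilon \ge 1/2$. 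All three contradict $\varepsilon < 1/2$. The main obstacle is really just spotting the right $x$ and the adequate-family structural observation; after that the numerical estimates are elementary.
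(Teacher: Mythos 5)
Your proposal is correct and follows essentially the same route as the paper: the same witness $x=\tfrac12(e_{(0)}+e_{(1)})$, the same observation that $\{(0),(1)\}$ is the only member of $\A$ meeting $\{(0),(1)\}$ in two points, and the same reduction to bounding $\sum_{\pm}\bigl(|\tfrac12\pm y_{(0)}|+|\tfrac12\pm y_{(1)}|\bigr)$ by $3$ under the constraint $|y_{(0)}|+|y_{(1)}|\le 1$. The only cosmetic difference is that you obtain the bound $3$ via the identity $|\tfrac12+a|+|\tfrac12-a|=\max(1,2|a|)$ and a case split, where the paper invokes convexity and extreme points.
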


\begin{proof}
  Consider $x = \frac{1}{2}(e_{(0)} + e_{(1)}) \in S_{X_{\mbt}}$.
  We want to show that for all $y \in S_{X_{\mbt}}$ we
  have $\min\|x \pm y\| \le \frac{3}{2}$.

  Let $y = \sum_{t \in \mbt} y_t e_t \in S_{X_{\mbt}}$.
  Let $A$ be a subset of a branch or a $\lambda$-segment.
  If $A \neq \{(0),(1)\}$, then
  \begin{equation*}
    \sum_{t \in A} |x_t \pm y_t|
    \le
    \begin{cases}
      \frac{1}{2} + \sum_{t \in A}|y_t|;
      & A \cap \{{(0)},{(1)}\} \neq \emptyset \\
      \sum_{t \in A}|y_t|;
      & A \cap \{(0),(1)\} = \emptyset
    \end{cases}
    \le
    \begin{cases}
      \frac{3}{2} \\
      1
    \end{cases}
  \end{equation*}
  If $A = \{(0),(1)\}$, then,
  since $|y_{(0)}| + |y_{(1)}| \le 1$
  and a convex function
  attains its maximum at the extreme points,
  we get
  \begin{equation*}
    |\frac{1}{2} + y_{(0)}|
    +
    |\frac{1}{2} + y_{(1)}|
    +
    |\frac{1}{2} - y_{(0)}|
    +
    |\frac{1}{2} - y_{(1)}|
    \le 3.
  \end{equation*}
  Hence $\min\|x \pm y\| \le \frac{3}{2}$.
\end{proof}

\def\cprime{$'$} \def\cprime{$'$} \def\cprime{$'$}
\providecommand{\bysame}{\leavevmode\hbox to3em{\hrulefill}\thinspace}
\providecommand{\MR}{\relax\ifhmode\unskip\space\fi MR }
% \MRhref is called by the amsart/book/proc definition of \MR.
\providecommand{\MRhref}[2]{%
  \href{http://www.ams.org/mathscinet-getitem?mr=#1}{#2}
}
\providecommand{\href}[2]{#2}


\begin{thebibliography}{BGLPRZ18}

\bibitem[AHLP20]{AHLP}
T.~A. Abrahamsen, R.~Haller, V.~Lima, and K.~Pirk, \emph{Delta- and {D}augavet
  points in {B}anach spaces}, Proc. Edinb. Math. Soc. (2) \textbf{63} (2020),
  no.~2, 475--496. \MR{4085036}

\bibitem[ALL16]{ALL}
Trond~A. Abrahamsen, Johann Langemets, and Vegard Lima, \emph{Almost square
  {B}anach spaces}, J. Math. Anal. Appl. \textbf{434} (2016), no.~2,
  1549--1565. \MR{3415738}

\bibitem[AM93]{MR1226181}
S.~Argyros and S.~Mercourakis, \emph{On weakly {L}indel\"{o}f {B}anach spaces},
  Rocky Mountain J. Math. \textbf{23} (1993), no.~2, 395--446. \MR{1226181}

\bibitem[AT04]{MR2053392}
S.~A. Argyros and A.~Tolias, \emph{Methods in the theory of hereditarily
  indecomposable {B}anach spaces}, Mem. Amer. Math. Soc. \textbf{170} (2004),
  no.~806, vi+114. \MR{2053392}

\bibitem[BGLPRZ15]{BGLPRZ4}
J.~Becerra~Guerrero, G.~L{\'o}pez-P{\'e}rez, and A.~Rueda~Zoca, \emph{Big
  slices versus big relatively weakly open subsets in {B}anach spaces}, J.
  Math. Anal. Appl. \textbf{428} (2015), no.~2, 855--865. \MR{3334951}

\bibitem[BGLPRZ18]{BGLPRZ-diametral}
J.~Becerra~Guerrero, G.~L\'{o}pez-P\'{e}rez, and A.~Rueda~Zoca, \emph{Diametral
  diameter two properties in {B}anach spaces}, J. Convex Anal. \textbf{25}
  (2018), no.~3, 817--840. \MR{3818544}

\bibitem[HLP15]{HLP}
R.~Haller, J.~Langemets, and M.~P{\~o}ldvere, \emph{On duality of diameter 2
  properties}, J. Convex Anal. \textbf{22} (2015), no.~2, 465--483.

\bibitem[HPV20]{haller2020daugavet}
R.~Haller, K.~Pirk, and T.~Veeorg, \emph{Daugavet- and delta-points in absolute
  sums of {B}anach spaces}, 2020.

\bibitem[IK04]{zbMATH02168839}
Y.~{Ivakhno} and V.~{Kadets}, \emph{{Unconditional sums of spaces with bad
  projections.}}, {Visn. Khark. Univ., Ser. Mat. Prykl. Mat. Mekh.}
  \textbf{645} (2004), no.~54, 30--35 (English).

\bibitem[Kad96]{Kadets}
V.~M. Kadets, \emph{Some remarks concerning the {Daugavet} equation},
  Quaestiones Math. \textbf{19} (1996), 225--235.

\bibitem[LT77]{LiTz1}
J.~Lindenstrauss and L.~Tzafriri, \emph{Classical {B}anach spaces {I}},
  Springer, Berlin-Heidelberg-New York, 1977.

\bibitem[Shv00]{MR1784413}
R.~V. Shvydkoy, \emph{Geometric aspects of the {D}augavet property}, J. Funct.
  Anal. \textbf{176} (2000), no.~2, 198--212. \MR{1784413 (2001h:46019)}

\bibitem[Tal79]{MR554378}
M.~Talagrand, \emph{Espaces de {B}anach faiblement
  {$\mathcal{K}$}-analytiques}, Ann. of Math. (2) \textbf{110} (1979), no.~3,
  407--438. \MR{554378}

\bibitem[Tal84]{MR736065}
\bysame, \emph{A new countably determined {B}anach space}, Israel J. Math.
  \textbf{47} (1984), no.~1, 75--80. \MR{736065}

\bibitem[Wer01]{MR1856978}
D.~Werner, \emph{Recent progress on the {D}augavet property}, Irish Math. Soc.
  Bull. (2001), no.~46, 77--97. \MR{1856978 (2002i:46014)}

\end{thebibliography}
\end{document}